\newcommand{\N}{\mathbb{N}}
\newcommand{\Z}{\mathbb{Z}}
\newcommand{\Q}{\mathbb{Q}}
\newcommand{\R}{\mathbb{R}}
\newcommand{\C}{\mathbb{C}}
\newcommand{\A}{\mathbb{A}}
\DeclareMathOperator{\vol}{vol}
\DeclareMathOperator{\diam}{diam}
\DeclareMathOperator{\triv}{triv}
\DeclareMathOperator{\Ind}{Ind}
\DeclareMathOperator{\cInd}{cInd}
\DeclareMathOperator{\cdim}{cdim}
\DeclareMathOperator{\Hom}{Hom}
\DeclareMathOperator{\Lie}{Lie}
\DeclareMathOperator{\End}{End}
\DeclareMathOperator{\WF}{WF}
\DeclareMathOperator{\proj}{proj}
\newtheorem{theorem}{Theorem}[section]
\newtheorem{lemma}[theorem]{Lemma}
\newtheorem{proposition}[theorem]{Proposition}
\newtheorem{corollary}[theorem]{Corollary}
\theoremstyle{definition}
\newtheorem{definition}[theorem]{Definition}
\theoremstyle{remark}
\newtheorem{remark}[theorem]{Remark}
\title[The canonical dimension of compactly induced representations]{A lower bound for the canonical dimension of compactly induced representations}
\author{Mick Gielen}
    \address[M. Gielen]{Mathematical Institute, University of Oxford, Oxford OX2 6GG, UK}
    \email{mick.gielen@maths.ox.ac.uk}
\begin{document}

\begin{abstract}
    The canonical dimension is an invariant attached to admissible representations of $p$-adic reductive groups, which has only received significant attention in the case of mod-$p$ representations. In the case of complex representations, the canonical dimension is closely related to the wavefront set. We find a new lower bound for the canonical dimension of a general compactly induced representation over an arbitrary coefficient field. This lower bound is uniform in the sense that it only depends on the group and not on the representation itself. In many cases, this provides a lower bound for the canonical dimension of supercuspidal representations and in the complex case we get a lower bound for the corresponding wavefront set. In order to obtain this result, we first generalize a result on the asymptotic growth of the cardinality of balls in the Bruhat-Tits building to the case of exceptional types.
\end{abstract}

\maketitle

\tableofcontents

\section{Introduction}
The canonical dimension (also known in the literature as the \emph{Gelfand-Kirillov dimension}) of an admissible representation of a reductive $p$-adic group is an invariant, which has only properly been studied in the case of mod-$p$ coefficients using ring-theoretic methods. In this paper we study the canonical dimension over an arbitrary field of coefficients. 

The canonical dimension can be described as a means to quantify the size of the (usually infinite dimensional) representations of a $p$-adic group $G$. The idea is as follows. Suppose we are given a smooth representation $(\pi,V)$ of $G$ and a decreasing chain $K_0\supseteq K_1\supseteq\dots$ of open compact subgroups of $G$ which form a neighbourhood basis of the identity in $G$. Then we can consider the increasing chain $V^{K_0}\subseteq V^{K_1}\subseteq\dots$ of subspaces consisting of exactly the vectors fixed by the $K_i$. The smoothness property of $\pi$ guarantees that this chain exhausts all of $V$:
\[
V=\bigcup_{n=0}^\infty V^{K_n}.
\]
If $\pi$ is moreover admissible, then all the $V^{K_n}$ are finite dimensional and we can attempt to quantify the size of $\pi$ by quantifying the growth of the dimensions $\dim(V^{K_n})$. This rate of growth depends on the choice of decreasing chain $K_0\supseteq K_1\supseteq\dots$ and hence we should fix such a choice. A convenient choice is provided by the Moy-Prasad filtrations of the parahoric subgroups of $G$. It turns out that $\dim(V^{K_i})$ grows exponentially in $i$ and the canonical dimension of $\pi$ quantifies this exponential growth.

The main result of this paper is a lower bound for the canonical dimension of a compactly induced representation of a split group (see Theorem \ref{thm:fundamental}). This lower bound is uniform in the sense that it depends on neither the subgroup induced from nor the representation of this subgroup, but only on the group $G$. The main motivation for obtaining this result comes from the case of complex coefficients. In this case there is a closely related invariant called the \emph{wavefront set}. The wavefront set is defined in terms of the \emph{Harish-Chandra-Howe local character expansion}, which expands the distribution character of an admissible complex representation near the unit as a finite linear combination in terms of nilpotent orbits. A lot of work has gone into trying to compute these wavefront sets (\cite{barbaschmoy}, \cite{bco}, \cite{degenerate}, \cite{waldspurger}). However, the local character expansion is hard to compute and hence so is the wavefront set. 

As a corollary of our main result, we obtain, under a mild hypothesis on $p$, a lower bound for the canonical dimension of an admissible, complex, supercuspidal representation (see Theorem \ref{thm:corollary}). Because of the close relation between the wavefront set and the canonical dimension, this implies a lower bound for the wavefront set. There are previously known results that could be interpreted as lower bounds for the wavefront set. For example, it follows from the work \cite{moeglin} by Moeglin that wavefront sets of tempered representations of classical groups consist of nilpotent orbits which are distinguished in the sense that they do not intersect any proper Levi subalgebra. However, as far as we know, there are no previously known quantitative lower bounds for the wavefront set such as the one we find in this paper.

In the case where the representation is moreover of depth-zero, we also prove an upper bound for its canonical dimension, which is sharp (see Corollary \ref{cor:complexUpperBound}). This upper bound coincides with the trivial upper bound that arises when examining the canonical dimension using the Harish-Chandra-Howe local character expansion. Nonetheless, our proof of this upper bound is interesting, because it makes no use whatsoever of the local character expansion. The character expansion is a deep theorem that requires a significant amount of analysis to prove. On the other hand, our methods are elementary, except where we make use of Bruhat-Tits theory. As far as we know, this perspective on the wavefront set from point of view of the canonical dimension and Bruhat-Tits theory is completely new. Not only does this new perspective reprove old results, like the upper bound, it also proves something entirely new, namely the lower bounds find. This suggests that the canonical dimension is worthy of further study.

Outside of the complex case, the implications of our main result are less clear. In the case of mod-$p$ representations, our lower bounds are of no significance, since our methods easily show that in this case no compactly induced representation can be admissible. In the mod-$l$ case we also find lower bounds for the canonical dimension supercuspidal representations under some restrictions to $G$. In this case, there is no wavefront set to which we can apply these results. Therefore it would be extra interesting to investigate which implications our results have in the mod-$l$ case.

In order to compute a lower bound for the canonical dimension, we apply a version of Mackey's theorem to a compactly induced representation to describe the canonical dimension of this representation in terms of the cardinalities of subsets of the Bruhat-Tits building of the group $G$. While these subsets depend on the representation that we compactly induce, we find that independently of this representation they contain certain balls in the Bruhat-Tits building. If the asymptotic cardinalities of these balls are known, we can thus obtain lower bounds for the canonical dimension.

The asymptotic cardinalities of these balls were computed in \cite{gao} for classical types. We extend this result to exceptional types (see Theorem \ref{thm:cardinality}). This is a geometrical result of independent interest that we then use to obtain the desired bounds on the canonical dimension.

In later work, we intend to refine our techniques to allow for a more precise calculation that might compute the canonical dimension exactly. Moreover, it would be interesting to see if our results for supercuspidal representations can be generalized to arbitrary representations. To see why this might be plausible, we note that it is known how the wavefront set behaves with respect to parabolic induction and thus we obtain lower bounds for the wavefront sets of parabolically inductions of supercuspidal representations. Moreover, in this paper we compute the canonical dimension of a parabolically induced representation in terms of the canonical dimension of the original representation (see Theorem \ref{thm:parabolic}). Thus the only obstacle left to finding lower bounds for the canonical dimension of an arbitrary irreducible complex representation is to study the behaviour of the canonical dimension with respect to taking subquotients. This could be a serious obstacle however and currently we do not know of an obvious way to tackle this problem.

\subsection{Structure}
In Section \ref{sec:preliminary} we recall basic preliminaries regarding parahoric subgroups and the Bruhat-Tits building. In particular, we recall the Moy-Prasad filtrations. In Section \ref{sec:building} we generalize the results of \cite{gao} to exceptional types and obtain the necessary understanding of the asymptotic growth of cardinalities of balls in the Bruhat-Tits building.
In Section \ref{sec:cdim} we define the canonical dimension and establish some basic properties. In Section \ref{sec:main} we obtain our main result: lower bounds for the canonical dimensions of compactly induced representations.
In Section \ref{sec:local} we recall the theory necessary to formulate the local character expansion. Then in Section \ref{sec:complexcdim} we use the local character expansion to prove some basic properties of the canonical dimension of complex representations and to relate the canonical dimension to the wavefront set. Finally, in Section \ref{sec:supercuspidal} we obtain upper bounds for the canonical dimensions of depth-zero supercuspidal complex representations.

\subsection{Acknowledgments}
The author would like to thank his advisor Dan Ciubotaru for suggesting this topic as well as the many invaluable discussions that followed. The author would also like to thank Emile Okada for fruitful discussions about this paper. This work was supported by a United Kingdom Research and Innovation (UKRI) Engineering and Physical Sciences Research Council (EPSRC) scholarship.

\subsection{Notation}
Throughout this text, $F$ denotes a $p$-adic field (i.e. a finite extension of $\Q_p$ for some prime $p$) with ring of integers $\mathcal{O}$. We write $\mathfrak{p}$ for the maximal ideal in $\mathcal{O}$ and we fix a uniformizer $\varpi\in\mathfrak{p}$. The residue field $\mathcal{O}/\mathfrak{p}$ is denoted by $k$. It is a field of characteristic $p$ and we denote its cardinality by $q$. The normalized valuation on $F$ is denoted $\omega$ and the associated norm is $|\cdot|$. We normalize this norm such that $|p|=p^{-1}$.

We fix a split, simply connected, absolutely almost simple algebraic group $\mathbb{G}$ defined over $F$ and let $G=\mathbb{G}(F)$ be its group of $F$-rational points. Such groups are classified by Dynkin diagrams. We expect our results to generalize straightforwardly to split reductive groups of any kind, however we will stick to the simply connected, absolutely almost simple case for simplicity. We also fix a maximal torus $T\subseteq G$ which splits over $F$. Write $X^*(T)$ ($X_*(T)$) for the characters (cocharacters) of $T$ and let $\Phi\subseteq X^*(T)$ be the root system of $G$ associated with $T$. Note that our assumptions on $\mathbb{G}$ guarantee that $\Phi$ is an irreducible, reduced root system. Unless otherwise specified, we fix a base $\Pi=\{\alpha_1,\dots,\alpha_d\}$ for $\Phi$, where $d$ is the rank of $G$. This partitions the set of roots $\Phi$ into 
the set $\Phi^+$ of positive roots and the set $\Phi^-$ of negative roots. We write $\alpha_0$ for the longest root corresponding to our base $\Pi$. We denote the Lie algebra $\Lie(\mathbb{G})$ by $\mathfrak{g}$ and refer to it as the Lie algebra of $G$.

We will consider representations over an arbitrary fixed field $\mathcal{F}$. We will write $\Ind$ and $\cInd$ respectively for the induction and compact induction functors.

\section{Parahoric subgroups and the Bruhat-Tits building}\label{sec:preliminary}
In this section, we fix some notation and terminology and recall some basic facts about the Bruhat-Tits building of $G$ (defined in \cite{bruhat1}) and the Moy-Prasad filtrations (defined in \cite{moy-prasad}). 

Given $\alpha\in \Phi$, let $U_\alpha$ be the corresponding root subgroup of $G$, then $U_\alpha$ is isomorphic to the additive group $\mathbb{G}_a$. We fix a \emph{pinning} of $(G,T)$, which means that we fix isomorphisms $u_\alpha: \mathbb{G}_a\to U_\alpha$ for all $\alpha\in\Phi$ such that
\begin{enumerate}[(i)]
    \item for all $\alpha\in \Phi$ there exists a unique isomorphism $\zeta_\alpha$ from $SL_2$ to the subgroup generated by $U_\alpha$ and $U_{-\alpha}$ such that for all $u\in F$
    \[
    \zeta_\alpha\left(\begin{pmatrix}1 & u \\ 0& 1\end{pmatrix}\right) = u_\alpha(u)
    \]
    and
    \[
    \zeta_\alpha\left(\begin{pmatrix}1 & 0 \\ u& 1\end{pmatrix}\right) = u_{-\alpha}(u).
    \]
    \item for all $\alpha,\beta\in \Phi$ with $\beta\neq -\alpha$ there exist $C_{\alpha,\beta,k,l}\in \Z$ such that for all $u,v\in F$ the commutator between $u_\alpha(u)$ and $u_\beta(v)$ satisfies
    \[
    [u_\alpha(u),u_\beta(v)] = \prod_{k,l\in \N_{>0}, k\alpha+l\beta\in\Phi} u_{k\alpha+l\beta}\left(C_{\alpha,\beta,k,l} u^kv^l\right),
    \]
    where we have fixed an ordering of $\Phi$ (upon which the structure constants $C_{\alpha,\beta,k,l}$ will depend).
\end{enumerate}
We define the subgroup $U^+$ (respectively $U^-$) to be the subgroup of $G$ generated by the $U_\alpha$ with $\alpha\in \Phi^+$ (respectively $\alpha\in \Phi^-$). The following proposition is proven in \cite{bruhat1} (see Proposition 6.1.6 in \cite{bruhat1}).
\begin{proposition}\label{prop:bijProd}
    Let $\Phi^+$ and $\Phi^-$ be ordered arbitrarily, then the product maps
    \[
    \prod_{\alpha\in \Phi^{\pm}} U_\alpha\to U^{\pm}
    \]
    are bijective.
\end{proposition}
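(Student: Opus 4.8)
The plan is to prove both bijections at once; by the involution $u_\alpha\leftrightarrow u_{-\alpha}$ of the pinning it suffices to treat $\Phi^+$, and I would prove the slightly more general assertion that for every subset $\Psi\subseteq\Phi^+$ that is closed under addition of roots, the product map $\prod_{\beta\in\Psi}U_\beta\to\langle U_\beta:\beta\in\Psi\rangle$ is bijective for every ordering of $\Psi$ (the proposition being $\Psi=\Phi^+$). The workhorse is a \emph{reordering lemma} read off from the commutator relation (ii): for $\beta,\gamma\in\Psi$ the only root subgroups occurring in $[u_\beta(u),u_\gamma(v)]$ are the $U_{k\beta+l\gamma}$ with $k,l\geq 1$, and every such root again lies in $\Psi$ and has $\operatorname{ht}(k\beta+l\gamma)>\max(\operatorname{ht}\beta,\operatorname{ht}\gamma)$. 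Transposing two adjacent factors $U_\beta U_\gamma$ into $U_\gamma U_\beta$ at the cost of extra factors of strictly larger height, and inducting on the number of inversions of an ordering relative to a fixed height-increasing one, one concludes that the subset $U_{\beta_1}\cdots U_{\beta_r}\subseteq G$ is independent of the enumeration $\beta_1,\dots,\beta_r$ of $\Psi$.

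Surjectivity is then formal. Writing $V$ for this subset: since arbitrary reorderings are allowed, $V\cdot U_{\beta_i}$ can be rewritten as a product of the $U_{\beta_j}$ with $U_{\beta_i}$ occurring twice in adjacent position, and $U_{\beta_i}U_{\beta_i}=U_{\beta_i}$ gives $V\cdot U_{\beta_i}=V$; hence $V\cdot V=V$, and as $1\in V$ and each $U_{\beta_i}^{-1}=U_{\beta_i}\subseteq V$, the set $V$ is a subgroup. It contains each $U_\beta$ ($\beta\in\Psi$), so it equals $\langle U_\beta:\beta\in\Psi\rangle$; for $\Psi=\Phi^+$ this is $U^+$.

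Injectivity is the substantive part, and I expect its core to be the main obstacle. I would induct on $|\Psi|$, peeling off a root $\alpha\in\Psi$ of minimal height. Such an $\alpha$ is not a sum of two elements of $\Psi$, so $\Psi\setminus\{\alpha\}$ is again closed, and $U_\alpha$ normalizes $U':=\langle U_\beta:\beta\in\Psi\setminus\{\alpha\}\rangle$ since $[u_\alpha(u),u_\beta(v)]$ involves only the $U_{k\alpha+l\beta}$ with $l\geq 1$, each of height $>\operatorname{ht}\alpha$ and hence in $\Psi\setminus\{\alpha\}$. Thus $\langle U_\beta:\beta\in\Psi\rangle=U_\alpha U'$, and because $\prod_{\beta\in\Psi\setminus\{\alpha\}}U_\beta\to U'$ is bijective by the inductive hypothesis, the product map for any enumeration placing $\alpha$ first is bijective as soon as one shows the \emph{disjointness}
\[
U_\alpha\cap U'=\{1\}.
\]
This is the step that genuinely uses that $\mathbb{G}$ is an algebraic group rather than pure root-system bookkeeping. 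When $\alpha$ is a simple root — always available while any simple root remains in $\Psi$, in particular for $\Psi=\Phi^+$ — I would take a representative $n_\alpha\in N_G(T)$ of $s_\alpha$ built from $\zeta_\alpha$; then $n_\alpha U_\alpha n_\alpha^{-1}=U_{-\alpha}$ while $n_\alpha U'n_\alpha^{-1}=U'$ (as $s_\alpha$ permutes $\Phi^+\setminus\{\alpha\}$), so conjugation carries $U_\alpha\cap U'$ into $U_{-\alpha}\cap U'\subseteq U^-\cap U^+$, which is trivial because $B^+\cap B^-=T$ and $U^-\cap T=\{1\}$. For the later steps, where the minimal-height $\alpha$ need not be simple, the same conclusion follows after passing to the quotient of $\langle U_\beta:\operatorname{ht}\beta\geq\operatorname{ht}\alpha\rangle$ by the normal subgroup generated by the $U_\gamma$ with $\operatorname{ht}\gamma>\operatorname{ht}\alpha$, in which the root groups of height $\operatorname{ht}\alpha$ form an internal direct product. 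Finally, bijectivity for an arbitrary enumeration follows from bijectivity for an $\alpha$-first one: the transition map on $\prod_{\beta\in\Psi}U_\beta\cong F^{|\Psi|}$ induced by reordering is given by the Chevalley commutator polynomials and is, relative to the ordering of coordinates by height, triangular with identity diagonal, hence a bijection.
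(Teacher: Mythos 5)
The paper's ``proof'' of Proposition~\ref{prop:bijProd} is a citation to Bruhat--Tits~6.1.6, so your direct argument is necessarily a different route; it is close to the classical Chevalley--Steinberg proof and the high-level strategy (reorder via the commutator relations, deduce surjectivity formally, prove injectivity by peeling off a minimal root and establishing disjointness) is the right one. However, two of the steps do not go through as written.

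First, the reordering lemma. You propose to show that $U_{\beta_1}\cdots U_{\beta_r}$ is independent of the enumeration of $\Psi$ by ``inducting on the number of inversions'' relative to a height-increasing reference ordering. The problem is that each transposition of an out-of-order adjacent pair does not merely permute factors: the commutator formula replaces $U_\beta U_\gamma$ by a product in which \emph{new} factors $U_{k\beta+l\gamma}$ of higher height appear, lodged in the middle of the word. So the ``number of inversions'' of the resulting expression is not well-defined (the word is no longer a permutation of $\Psi$), and there is no visible decreasing measure. The fix is to run the classical collection process on \emph{words} with repetitions allowed, collecting the lowest-height generators first and using the bound on the height of $\alpha_0$ to force termination, or equivalently to work downward along the central filtration by height and prove that each graded piece is generated with unique normal form. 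Either way this needs a genuinely different induction than the one you state.

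Second, and more seriously, the disjointness step for a non-simple minimal root $\alpha$. Your argument passes to the quotient of $H=\langle U_\beta:\operatorname{ht}\beta\geq h\rangle$ by $K=\langle U_\gamma:\operatorname{ht}\gamma>h\rangle$ and asserts that ``the root groups of height $h$ form an internal direct product'' in $H/K$. But that assertion \emph{is} the injectivity you are trying to prove, restricted to a single height level; nothing in the commutator relations forces the images $\bar U_\beta\subseteq H/K$ to intersect trivially, so as stated this is circular. (Your argument for the simple-root case is fine and cleaner than you give it credit for: even when $\Psi\subsetneq\Phi^+$, one only needs $U'\subseteq\langle U_\beta:\beta\in\Phi^+\setminus\{\alpha\}\rangle$, which $s_\alpha$ does preserve, so $n_\alpha U'n_\alpha^{-1}\subseteq U^+$; one does not need $s_\alpha$ to preserve $\Psi'$ itself.) To repair the non-simple case you need an additional input, such as the $T$-action on the abelian quotient $H/K$ together with the fact that the distinct characters $\beta$ of height $h$ make the $\bar U_\beta$ into independent weight spaces, or a reduction back to the simple-root case via a suitable element of the Weyl group, or the scheme-theoretic/Lie-algebra argument that underlies the reference the paper cites. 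Absent such an ingredient, the proof has a genuine gap at exactly the point you yourself flagged as ``the main obstacle.''
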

The pinning gives us filtrations of the root subgroups $U_\alpha$ by setting
\[
U_{\alpha,r} \coloneqq\{u_\alpha(u)\mid u\in F, \,\omega(u)\geq r\}
\]
for all $r\in \R$ and $\alpha\in\Phi$. We also have a filtration of the maximal torus $T$ by setting
\[
T_0\coloneqq \{t\in T\mid \forall\chi\in X^*(T),\, \omega(\chi(t))=0\}
\]
and
\[
T_r\coloneqq \{t\in T_0\mid \forall\chi\in X^*(T),\,\omega(\chi(t)-1)\geq r\}
\]
for all $r\in \R$.

\subsection{The apartment}
Let $N\leq G$ be generated by $T$ and $\zeta_\alpha\left(\begin{pmatrix} 0 & 1 \\ -1 & 0 \end{pmatrix}\right)$ for all $\alpha\in \Phi$. Then $N=Z_G(T)$ and the quotient $N/T$ is isomorphic to the (finite) \emph{Weyl group} $W$ of $G$. We write $V\coloneqq X_*(T)\otimes_\Z \R$. It is clear that there is an action of $W$ on $V$ by linear transformations. This action can be extended to an action of $N$ on $V$ by affine transformations. When viewing $V$ as an affine space over itself, we denote it by $\A$. It is called the \emph{apartment of $G$ corresponding to the torus $T$}. We will find it convenient to fix an origin $o$ for $\A$, which corresponds to the origin of $V$. The action of $N$ on $\A$ is via the quotient $N/T_0$, which is isomorphic to a group $\widehat{W}$ called the \emph{affine Weyl group} of $G$. The subset of $\widehat{W}$ acting via affine reflections gives rise to a set of affine hyperplanes in $\A$ and by taking the closed half-spaces on both sides of all those hyperplanes, we obtain an \emph{affine root system} $\Sigma$ in the sense of \cite{bruhat1}. Given $a\in\Sigma$ the hyperplane $\partial a$ is called a \emph{wall}. Every element $a\in \Sigma$ is of the form
\[
a = \alpha+r\coloneqq \{ x\in\A\mid \alpha(x)+r\geq 0\}
\]
for some $\alpha\in\Phi$ and $r\in\Z$ and every set of this form is in $\Sigma$. Here we are abusing notation by writing $\alpha+r$ both for the affine function $x\mapsto \alpha(x)+r$ on $\A$ and for the half-space it defines. We obtain a surjective map $\Sigma\to\Phi:a\mapsto \hat{a}=\alpha$. The action of $N$ on $\A$ induces an action of $N$ on $\Sigma$. Let $a,b\in \Sigma$, then the walls $\partial a$ and $\partial b$ are called \emph{parallel} if $\hat{a}=\pm\hat{b}$, moreover we say that $\partial a$ is parallel to $\hat{a}$.

An equivalence relation on $\A$ is now obtained by specifying that two points are equivalent when the sets of affine roots they are contained in are the same. The closures of the equivalence classes under this relation are called \emph{facets} and they give rise to a simplicial structure on $\A$, where the equivalence classes are the open cells. The simplices of maximal dimension are called \emph{alcoves} and they are of dimension $d$. Any alcove is a fundamental domain for the action of $N$ on $\A$. Because we have fixed a pinning of $G$ and a base $\Pi$ for our root system, there is a canonical alcove $C$ given by
\[
C\coloneqq \{x\in\A\mid \text{for }1\leq i\leq d, \:\alpha_i(x)\geq 0\text{ and }\alpha_0(x)\leq 1\}.
\]
The vertices of $C$ are $v_0=0$ and $v_1,\dots,v_d$. Here the $v_i$ with $i>0$ are given by $\alpha_j(v_i)=0$ for all $j\neq i,0$ and $\alpha_0(v_i)=1$. We now expand $\alpha_0$ in the basis given by $\Pi$ and find that
\[
\alpha_0 = \sum_{i=1}^d c_i \alpha_i
\]
for some positive integers $c_i$. We define $\omega_i\coloneqq c_i v_i$ and then we note that $\alpha_i(\omega_j)=\delta_{ij}$. Thus the $\omega_i$ form a basis for $V$ dual to $\Pi$ and they are called the \emph{fundamental coweights}.

Associated to $C$ there is a \emph{fundamental Weyl chamber} $C^+\coloneqq \R_{\geq 0}\cdot C$. Because $C$ is a fundamental domain for the action of $N$ on $\A$, every vertex $x$ in $\A$ is $G$-conjugate to exactly one of the $v_i$, which gives rise to a map $\lambda:\A_0\to\{0,\dots,d\}$ given by $x\mapsto i$. Here $\A_0$ is the set of vertices in $\A$. Given a vertex $x\in \A_0$, $\lambda(x)$ is called the \emph{type} of $x$ and the map $\lambda$ is a \emph{typing}.

A vertex $x$ in $\A$ is called \emph{special} if for any $a\in \Sigma$ there is a $b\in\Sigma$ parallel to $a$ such that $x\in\partial b$. Given $x,y\in\A$, a wall $\partial a$ with $a\in\Sigma$ is said to \emph{separate} $x$ and $y$ if $a(x)<0<a(y)$ or $a(y)<0<a(x)$. 

\subsection{The Moy-Prasad filtrations}
The Moy-Prasad filtrations were first defined in \cite{moy-prasad}. In this subsection we briefly recall the necessary definitions and results.

Given $a\in\Sigma$ let $\alpha\in\Phi$ and $r\in \Z$ be such that $a=\alpha+r$. We then define
\[
U_a\coloneqq U_{\alpha,r}.
\]
If $r\in \R$, then we also define 
\[
U_{\alpha,r}\coloneqq U_{\alpha,\lceil r\rceil}.
\]

Let $\widetilde{\Phi}\coloneqq \Phi\cup\{ 0\}$ and let $f:\widetilde{\Phi}\to \R$, then we define $U_f$ to be the subgroup of $G$ generated by the $U_{\alpha,f(\alpha)}$ for all $\alpha\in\Phi$. We also define $P_f$ to be generated by $U_f$ and $T_{f(0)}$. We note that $U_f$ is normalized by $T_0$ and hence $P_f=T_{f(0)}U_f$. We write $N_f\coloneqq U_f\cap N$, $U_f^+\coloneqq U_f\cap U^+$ and $U_f^-\coloneqq U_f\cap U^-$. When dealing with subgroups of the form $P_f$ it is useful to restrict to a certain class of functions $f$.
\begin{definition}
    A function $f:\widetilde{\Phi}\to\R$ is called \emph{concave} if
    \begin{enumerate}[(i)]
        \item for all $\alpha,\beta\in\Phi$ such that $\alpha+\beta\in\Phi$ we have $f(\alpha)+f(\beta)\geq f(\alpha+\beta)$,  
        \item for all $\alpha\in\Phi$ we have $f(\alpha)+f(-\alpha)\geq f(0)$,
        \item $f(0)\geq 0$.
    \end{enumerate}
\end{definition}
\begin{proposition}\label{prop:fprod}
    Let $f:\widetilde{\Phi}\to\R$ be a concave function, then
    \[
    U_f=U_f^-U_f^+N_f.
    \]
    Moreover, for an arbitrary ordering of $\Phi^{\pm}$, the product map
    \[
    \prod_{\alpha\in\Phi^\pm}U_{\alpha,f(\alpha)}\to U_f^\pm
    \]
    is bijective. If in addition we have $f(0)>0$, then $N_f\leq T$ and the product map
    \[
    U_f^-\times U_f^+\times N_f\to U_f
    \]
    is bijective. It follows that in this case the product map
    \[
    U_f^-\times U_f^+ \times T_{f(0)}\to P_f
    \]
    is bijective.
\end{proposition}
\begin{proof}
    This follows from Proposition 6.4.9 in \cite{bruhat1} and Proposition 7.3.12 in \cite{kaletha}.
\end{proof}
\begin{remark}
    Since $U_f$ and $N_f$ are independent of the choice of a base for the root system $\Phi$, we can use the opposite base to find that also
    \[
    U_f=U_f^+U_f^-N_f
    \]
    and similarly we obtain variants of the other assertions.
\end{remark}

The following lemma will be crucial later.
\begin{lemma}\label{lem:crucial}
    Let $f,g:\widetilde{\Phi}\to \R$ be concave functions such that $f(0)=g(0)>0$ and $g\geq f$, then $P_g\leq P_f$ and
    \[
    [P_f:P_g] = \prod_{\alpha\in\Phi} q^{\lceil g(\alpha)\rceil - \lceil f(\alpha)\rceil}
    \]
\end{lemma}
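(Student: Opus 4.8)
The inclusion $P_g\leq P_f$ I would dispatch immediately: from $g\geq f$ we get $\lceil g(\alpha)\rceil\geq\lceil f(\alpha)\rceil$ for every $\alpha\in\Phi$ and $\lceil g(0)\rceil\geq\lceil f(0)\rceil$, hence $U_{\alpha,g(\alpha)}\subseteq U_{\alpha,f(\alpha)}$ and $T_{g(0)}\subseteq T_{f(0)}$, so $U_g\leq U_f$ and $P_g=T_{g(0)}U_g\leq T_{f(0)}U_f=P_f$. Since $\omega$ is $\Z$-valued, $U_{\alpha,r}$ and $T_r$ depend on $r$ only through $\lceil r\rceil$, and one checks that $\lceil\,\cdot\,\rceil$ sends concave functions to concave functions and preserves the hypotheses $f(0)=g(0)>0$ and $g\geq f$; so I may replace $f,g$ by $\lceil f\rceil,\lceil g\rceil$ and assume from now on that $f$ and $g$ are integer-valued, the goal becoming $[P_f:P_g]=\prod_{\alpha\in\Phi}q^{g(\alpha)-f(\alpha)}$.

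The main structural input I would use is an Iwahori-type factorization of $P_f$. Because $f$ is concave with $f(0)>0$, concavity condition (ii) forces $f(\alpha)+f(-\alpha)\geq f(0)>0$ for all $\alpha$, which rules out any affine reflection lying in $U_f$; together with the standard Bruhat--Tits fact that $P_f\cap T=T_{f(0)}$ this gives $N_f=U_f\cap N\subseteq T_{f(0)}$. Feeding this into the decomposition $U_f=U_f^+U_f^-N_f$ of Proposition \ref{prop:fprod2} and using that $T_{f(0)}\subseteq T_0$ normalizes $U_f^{\pm}$, I obtain
\[
P_f=T_{f(0)}U_f=U_f^+U_f^-T_{f(0)},
\]
and since $U^+\cap U^-T=1=U^-\cap T$ the product map $U_f^+\times U_f^-\times T_{f(0)}\to P_f$ is a bijection; the same holds for $g$, with the \emph{same} torus factor $T_{g(0)}=T_{f(0)}$. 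Proposition \ref{prop:fprod} refines this: over a fixed ordering of $\Phi^{\pm}$ the product map puts $U_f^{\pm}$ in bijection with $\prod_{\alpha\in\Phi^{\pm}}U_{\alpha,f(\alpha)}$, and likewise for $g$.

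To turn these set-level factorizations into an index I would pass to Haar measure. Fix a Haar measure $\mu$ on $G$; since $P_f,P_g$ are open and compact, $[P_f:P_g]=\mu(P_f)/\mu(P_g)$. The multiplication map $U^+\times U^-\times T\to G$ is an isomorphism of $F$-varieties onto the big cell, and a short argument using the unimodularity of $G$, $U^{\pm}$ and $T$ (left/right translations move an argument of the map through one factor at a time) shows that, relative to fixed invariant measures on the factors, its Jacobian is a constant; similarly each product map $\prod_{\alpha\in\Phi^{\pm}}U_\alpha\to U^{\pm}$ is a polynomial automorphism of affine space, hence also has constant Jacobian. It follows that there is a constant $c>0$, independent of $f$, with $\mu(P_f)=c\cdot\mu_T(T_{f(0)})\prod_{\alpha\in\Phi}\mu_\alpha(U_{\alpha,f(\alpha)})$, and the analogous formula for $g$. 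Dividing and using $\mu_T(T_{f(0)})=\mu_T(T_{g(0)})$ gives
\[
[P_f:P_g]=\prod_{\alpha\in\Phi}\frac{\mu_\alpha(U_{\alpha,f(\alpha)})}{\mu_\alpha(U_{\alpha,g(\alpha)})}=\prod_{\alpha\in\Phi}[U_{\alpha,f(\alpha)}:U_{\alpha,g(\alpha)}]=\prod_{\alpha\in\Phi}q^{g(\alpha)-f(\alpha)},
\]
where the last equality uses that $U_{\alpha,m}/U_{\alpha,n}\cong\mathfrak{p}^{m}/\mathfrak{p}^{n}$ has $q^{n-m}$ elements for integers $m\leq n$. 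Undoing the integrality reduction recovers the stated formula with ceilings.

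The step I expect to be the main obstacle is the factorization: pinning down $N_f$ and justifying $N_f\subseteq T_{f(0)}$ precisely when $f(0)>0$ (this is the one place where the strict inequality $f(0)>0$, as opposed to $f(0)\geq 0$, is genuinely used), together with the verification that all the multiplication maps above carry a Jacobian independent of $f$. Both are standard consequences of the structure theory in \cite{bruhat1} (Proposition 6.4.9 and its surroundings), but they must be cited and set up with some care; once they are in hand, the rest is bookkeeping.
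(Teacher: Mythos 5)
The paper does not actually prove this lemma in-line; it delegates entirely to Lemma 8.3.1 of \cite{gao}, so there is no proof to compare against. Your strategy — reduce to integer-valued $f,g$, establish the Iwahori-type factorization $P_f=U_f^+U_f^-T_{f(0)}$ with bijective product maps (via Propositions \ref{prop:fprod2} and \ref{prop:fprod}), then pass to Haar measure and divide — is a natural and essentially sound one, and you have correctly identified the two delicate points. But one of your justifications is actually wrong, not merely terse, and the other needs a precise reference.

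The Jacobian of the big-cell map $U^+\times U^-\times T\to G$, $(u^+,u^-,t)\mapsto u^+u^-t$, is \emph{not} constant, and no amount of juggling left/right translations will make it so: the $T$-factor sits at an end that must be conjugated past $U^-$ to reach the other end, and this is exactly where a twist by the modular character of a Borel, $\delta_B(t)=\prod_{\alpha\in\Phi^+}|\alpha(t)|$, appears. A direct computation in $\SL_2$ already gives Jacobian $|t|^{-1}$. What rescues your argument is that you only ever integrate the $T$-coordinate over $T_{f(0)}\subseteq T_0$, and $\delta_B\equiv 1$ on $T_0$ by definition of $T_0$; so the Jacobian is constant on the domain of integration, not on the big cell. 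You should replace the ``unimodularity, hence constant Jacobian'' claim by this observation. As for $N_f\subseteq T_{f(0)}$: the heuristic ``$f(\alpha)+f(-\alpha)>0$ excludes affine reflections'' rules out the obvious lift $\zeta_\alpha\bigl(\begin{smallmatrix}0&1\\-1&0\end{smallmatrix}\bigr)$ but does not a priori exclude other words in the generators landing in $N\setminus T$, nor does it bound $U_f\cap T$ inside $T_{f(0)}$; this containment is part of the structure theory around Bruhat--Tits 6.4.9 (and is the form in which Moy--Prasad use it for $P_{x,r}$, $r>0$) and should simply be cited. You flag this yourself, so I am sharpening rather than discovering the gap. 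With those two repairs the argument goes through and is in fact more explicit than what the paper offers, which is only a citation.
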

\begin{proof}
    This is Lemma 8.3.1 in \cite{gao}.
\end{proof}

For a bounded subset $\Omega\subseteq \A$ we define a function
\[
f_\Omega:\widetilde{\Phi}\to \R
\]
by $f_\Omega(0)=0$ and for all $\alpha\in\Phi$, $f(\alpha)=\inf\{r\in\R\mid \Omega\subseteq \alpha+r\}$. Then $f_{\Omega}$ is a concave function. If $\Omega=\{x\}$ is a singleton, we will write $f_x$ instead of $f_{\{x\}}$. Note that $f_x(\alpha)=-\alpha(x)$ for all $x\in\A$ and $\alpha\in\Phi$. Moreover, we denote $P_\Omega\coloneqq P_{f_\Omega}$ and $P_x\coloneqq P_{f_x}$. 

If $x\in\A$, then $P_x$ is called a \emph{parahoric subgroup} and it is an open compact subgroup of $G$. Moy and Prasad defined filtrations of the parahoric subgroups by setting
\[
P_{x,r}\coloneqq P_{f_x+r},
\]
where $r\in\R_{\geq 0}$. It is clear that if $s > r$ are nonnegative real numbers, then $P_{x,s}\leq P_{x,r}$.

One nice property of these parahoric subgroups is that some of them allow for a decomposition of the group $G$ as a product of a parabolic part and a compact part. This is also the first time we can recognize the importance of special vertices in the apartment.
\begin{theorem}[Iwasawa decomposition]
    Let $Q\leq G$ be a parabolic subgroup with Levi factor $M\leq Q$ such that $T\leq M$. Let $x\in \A$ be special, then
    \[
    G=QP_{x}.
    \]
\end{theorem}
\begin{proof}
    This follows from \cite{bruhat1} (see Subsection 4.4).
\end{proof}

\begin{definition}
    Let $f:\tilde{\Phi}\to \R$, then we define $f^*:\tilde{\Phi}\to \R$ by
    \[
    f^*(\alpha) = \begin{cases}
        f(\alpha)+1, & \text{if }f(\alpha)\in\Z,\\ \lceil f(\alpha)\rceil, & \text{otherwise.}
    \end{cases}
    \]
    We call $f^*$ the \emph{optimization} of $f$. Let $x\in\A$, then we write $P_{x+}$ instead of $P_{f_x^*}$.
\end{definition}
\begin{remark}
    If $f$ is concave, then so is $f^*$.
\end{remark}
Given $\alpha \in\Phi$, $r\in \R$ and $f:\widetilde{\Phi}\to\R$ we also define \
\[
U_{\alpha,r+}\coloneqq \bigcup_{s>r}U_{\alpha,s}\text{ and }T_{r+}\coloneqq \bigcup_{s>r}T_s.
\]

Clearly $P_{x+}$ is contained in $P_x$ and it is in fact a normal subgroup with quotient $P_x/P_{x+}$ that we denote by $\mathcal{G}_x$. It turns out that $\mathcal{G}_x$ is the group of rational points of some reductive group over the finite field $k$ (see Proposition 6.4.23 in \cite{bruhat1}).

We will need the following technical lemma.
\begin{lemma}\label{lem:intersect}
    Let $x,y\in\A$ and define $f\coloneqq \max(f_x^*,f_y)$, then we have 
    \[
    P_{x+}\cap P_{y}=P_{f}.
    \]
\end{lemma}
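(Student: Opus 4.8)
The plan is to prove the two inclusions $P_f\subseteq P_{x+}\cap P_y$ and $P_{x+}\cap P_y\subseteq P_f$ separately, where throughout $f\coloneqq\max(f_x^*,f_y)$; the first is routine and the second is the substantive part.

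First I would record that $f$ is again concave — each of the conditions (i)--(iii) is preserved under pointwise maxima, and $f_x^*$ is concave because $f_x$ is — and that $f(0)=\max(f_x^*(0),f_y(0))=\max(1,0)=1$. The inclusion $P_f\subseteq P_{x+}\cap P_y$ is then immediate: since $f\ge f_x^*$ and $f\ge f_y$ we have $U_{\alpha,f(\alpha)}\subseteq U_{\alpha,f_x^*(\alpha)}\cap U_{\alpha,f_y(\alpha)}$ for every $\alpha$ and $T_{f(0)}\subseteq T_{f_x^*(0)}\cap T_{f_y(0)}$, so $P_f\le P_{f_x^*}=P_{x+}$ and $P_f\le P_{f_y}=P_y$.

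For the reverse inclusion I would fix $z\in P_{x+}\cap P_y$ (one may assume $x\ne y$, the case $x=y$ being trivial) and decompose $z$ relative to a base chosen to exploit the geometry of the pair $(x,y)$: pick a base $\Psi$ of $\Phi$ with $\alpha(y-x)\ge 0$ for all $\alpha\in\Psi^+$, which is possible because $y-x$ lies in the closure of some Weyl chamber, and which is harmless because $P_{x+}$, $P_y$, $P_f$ and the functions $f_x,f_y,f$ do not depend on $\Psi$ (only the decompositions $U^{\pm}$ etc. do). Two consequences of this choice drive the argument. First, for $\alpha\in\Psi^+$ one has $f_x^*(\alpha)\ge f_x(\alpha)=-\alpha(x)\ge-\alpha(y)=f_y(\alpha)$, so $f$ and $f_x^*$ agree on $\Psi^+$ and hence $U^+_f=U^+_{f_x^*}$. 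Second, since $P_{x+}$ is a pro-$p$ group we have $N\cap P_{x+}=T_{0+}=T_1$, so $N_{f_x^*}=N\cap U_{f_x^*}\subseteq T_1$; together with Proposition \ref{prop:fprod2} and $P_{x+}=T_1U_{f_x^*}$ this yields a decomposition $z=v^-tv^+$ with $v^\pm\in U^\pm_{f_x^*}$ and $t\in T_1$. Now $v^+\in U^+_{f_x^*}=U^+_f\subseteq P_f$ and $t\in T_1=T_{f(0)}\subseteq P_f$, so $v^-=z(v^+)^{-1}t^{-1}$ lies in $P_{x+}\cap P_y$; in particular $v^-\in P_y\cap U^-=U^-_{f_y}$, while $v^-\in U^-_{f_x^*}$ by construction. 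Writing $v^-=\prod_{\alpha\in\Psi^-}u_\alpha(s_\alpha)$ via Proposition \ref{prop:bijProd} and reading off coordinates with Proposition \ref{prop:fprod}, membership of $v^-$ in both $U^-_{f_x^*}$ and $U^-_{f_y}$ forces $\omega(s_\alpha)\ge\max(f_x^*(\alpha),f_y(\alpha))=f(\alpha)$ for all $\alpha\in\Psi^-$, so $v^-\in U^-_f\subseteq P_f$, whence $z=v^-tv^+\in P_f$.

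I expect the crux to be the bookkeeping in this last step, and in particular the handling of the ``Weyl-group noise'' in the product decompositions: the decomposition $U_g=U_g^-U_g^+N_g$ of Proposition \ref{prop:fprod2} is not unique in general, since $N_g$ can contain nontrivial representatives of reflections when $g(0)=0$ (as for $g=f_y$), so one cannot naively match up a decomposition of $z$ coming from $P_{x+}$ with one coming from $P_y$. The choice of $\Psi$ is precisely what circumvents this: it pushes one half ($v^+$ together with $t$) of the $P_{x+}$-decomposition of $z$ directly into $P_f$, after which the remaining factor $v^-$ is pinned down by $z$ and can be controlled using only the single-sided statement $P_y\cap U^-=U^-_{f_y}$ from the standard structure theory of parahoric subgroups. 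The one input beyond the propositions recalled above is the identity $N\cap P_{x+}=T_{0+}$; it holds because $P_{x+}$ — unlike the parahoric $P_x$ — is pro-$p$, and this is exactly why the lemma is stated with $f_x^*$ rather than $f_x$: for $P_x\cap P_y$ one would have to reckon with genuine Weyl representatives inside $N\cap P_x$.
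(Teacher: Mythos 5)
Your argument is correct and rests on the same key moves as the paper's: pick a base with $y-x\in C^+$, factor an element of $P_{x+}\cap P_y$ via Proposition \ref{prop:fprod2}, use the base choice to see that the positive-root part already lands in $P_f$ (since $f=f_x^*$ on $\Psi^+$, equivalently $U_{f_x^*}^+\subseteq U_{f_y}^+$), and then pin down the remaining negative-root factor. You differ from the paper in two minor ways. First, you absorb the $N_{f_x^*}$-factor into $T_1$ at the start, rewriting the factorization as $z=v^-tv^+$; the paper carries the $N$-component along and only identifies it with an element of $T_1$ at the end after comparing two factorizations. Second, and more substantively, you invoke the single-sided identity $P_y\cap U^-=U_{f_y}^-$ as a known fact, whereas the paper derives the analogous conclusion internally by re-factoring the residual $vn$ inside $P_y$ via Proposition \ref{prop:fprod2} and matching the two decompositions using $N\cap U^-U^+=\{e\}$ and $U^+\cap U^-=\{e\}$. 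The identity you cite is true and is provable by exactly that matching argument, but it is not among the propositions the paper explicitly recalls, so the paper's version is self-contained where yours outsources one small step of Iwahori-type factorization. Your diagnosis that the statement is formulated with $f_x^*$ rather than $f_x$ precisely because $P_{x+}$ is pro-$p$, giving $N\cap P_{x+}=T_1$ and eliminating genuine Weyl representatives, is accurate and reflects exactly the role that fact plays in the paper's proof.
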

\begin{proof}
    The inclusion $P_f\subseteq P_{x+}\cap P_y$ is obvious, therefore we only need to prove the reverse inclusion. Without loss of generality we may assume that the base $\Pi$ for our root system $\Phi$ is chosen such that $y-x$ is in the fundamental Weyl chamber $C^+$. This implies that $\alpha(x)\leq \alpha(y)$ for all $\alpha \in \Phi$ and therefore it follows from Proposition \ref{prop:fprod} that $U_{x+}^+\subseteq U_y^+$.  
    
    Now let $g\in P_{x+}\cap P_{y}$, then by the remark after Proposition \ref{prop:fprod} we have 
    \[
    g=uvn
    \]
    for some $n\in N_{x+}$, $u\in U_{x+}^+$ and $v\in U_{x+}^-$. Now $U_{x+}^+\subseteq U_y^+$ implies that $u^{-1}g\in P_{y}$ and thus 
    \[
    vn= v^\prime u^\prime n^\prime
    \]
    for some $n^\prime\in N_y$, $u^\prime\in U_y^{+}$ and $v^\prime\in U_y^-$. It follows that 
    \[
    n(n^{\prime})^{-1}\in U^-U^+.
    \]
    However, $N\cap U^-U^+$ is trivial (this follows from 6.1.15 \emph{c)} in \cite{bruhat1}), so $n=n^\prime$ and we conclude $n\in N_{f_x^*}\cap N_o=T_{1}\cap N_y=T_{1}\subseteq P_f$. We also find 
    \[
    v^{-1}v^\prime u^\prime=e,
    \]
    where $e$ denotes the identity in $G$. Now using that $U^+\cap U^-$ is trivial we get $u^\prime=e$ and $v=v^\prime$. Therefore $v\in U_y^-\cap U_{x+}^-$ and it follows from Proposition \ref{prop:bijProd} that $U_y^+\cap U_{x+}^-= U_f^-$, therefore $v\in P_f$. We also have that $u\in U_{x+}^+=U_{y}^+\cap U_{x+}^+=U_f^+\subseteq P_f$. Because $u,v,n\in P_f$ it follows that $g\in P_f$. This proves that $P_{x+}\cap P_y\subseteq P_f$ completing the proof.
\end{proof}

\begin{lemma}\label{lem:quotientSize}
    Let $o$ be the origin in $\A$. There exists a constant $\gamma$ such that for every $x\in C^+$ we have
    \[
    \prod_{\alpha\in\Phi^+} q^{\max(\lceil\alpha(x)\rceil-1,0)} \leq [P_o:P_o\cap P_x] \leq \gamma \prod_{\alpha\in\Phi^+} q^{\max(\lceil\alpha(x)\rceil-1,0)}.
    \]
    Moreover, for any $r\in \N$ we have
    \[
    \prod_{\alpha\in\Phi^+} q^{\max(\min(\lceil\alpha(x)\rceil,r)-1,0)} \leq [P_o:P_{o,r}(P_o\cap P_x)] \leq \gamma \prod_{\alpha\in\Phi^+} q^{\max(\min(\lceil\alpha(x)\rceil,r)-1,0)}.
    \]
\end{lemma}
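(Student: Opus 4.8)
The plan is to obtain the lower bounds by restricting to the subgroup $U_o^-\le P_o$, and the upper bounds by passing from $P_o$ to $P_{o+}$ and invoking Lemmas \ref{lem:intersect} and \ref{lem:crucial}. Throughout I use that $f_o\equiv 0$ and $f_o^*\equiv 1$, that $[P_o:P_{o+}]=|\mathcal G_o|$ is finite and depends only on $G$ and $q$, that $f_x(\alpha)=-\alpha(x)$, and that $\alpha(x)\ge 0$ for $\alpha\in\Phi^+$ since $x\in C^+$; write $k_\beta\coloneqq\lceil\beta(x)\rceil\ge 0$ for $\beta\in\Phi^+$. I also use the standard facts that $U^\pm\cap P_f=U_f^\pm$ for concave $f$, that the product maps of Propositions \ref{prop:bijProd} and \ref{prop:fprod} are bijective, and that $[U_{\alpha,s}:U_{\alpha,t}]=q^{t-s}$ for integers $s\le t$. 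Together these give $U_x^-\subseteq U_o^-$ with $U_o^-\cap P_x=U_x^-=\prod_{\beta\in\Phi^+}U_{-\beta,k_\beta}$, so $[U_o^-:U_o^-\cap P_x]=\prod_\beta q^{k_\beta}$, and more generally $U_{o,r}^-\cap P_x=\prod_\beta U_{-\beta,\max(r,k_\beta)}$, so $[U_{o,r}^-:U_{o,r}^-\cap P_x]=\prod_\beta q^{\max(0,\,k_\beta-r)}$.

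For the first pair of inequalities, the inclusion $U_o^-\le P_o$ makes the natural map of coset spaces $U_o^-/(U_o^-\cap P_x)\to P_o/(P_o\cap P_x)$ injective, so
\[
[P_o:P_o\cap P_x]\ \ge\ [U_o^-:U_o^-\cap P_x]\ =\ \prod_{\beta\in\Phi^+} q^{k_\beta}\ \ge\ \prod_{\beta\in\Phi^+} q^{\max(k_\beta-1,\,0)} .
\]
For the upper bound, $P_{o+}\cap P_x\le P_o\cap P_x$ gives $[P_o:P_o\cap P_x]\le [P_o:P_{o+}]\,[P_{o+}:P_{o+}\cap P_x]$; Lemma \ref{lem:intersect} applied to the points $o$ and $x$ identifies $P_{o+}\cap P_x=P_g$ with $g=\max(f_o^*,f_x)$, and since $g$ is concave with $g(0)=1=f_o^*(0)>0$ and $g\ge f_o^*$, Lemma \ref{lem:crucial} gives $[P_{o+}:P_g]=\prod_{\alpha\in\Phi}q^{\lceil g(\alpha)\rceil-1}=\prod_\beta q^{\max(k_\beta-1,\,0)}$. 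Hence $[P_o:P_o\cap P_x]\le |\mathcal G_o|\prod_\beta q^{\max(k_\beta-1,\,0)}$.

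For the second pair I may assume $r\ge 1$, the case $r=0$ being trivial. Since $P_{o,r}\trianglelefteq P_o$ and $P_o\cap P_x\le P_o$, the set $P_{o,r}(P_o\cap P_x)$ is a subgroup with $[P_{o,r}(P_o\cap P_x):P_o\cap P_x]=[P_{o,r}:P_{o,r}\cap P_x]$, whence
\[
[P_o:P_{o,r}(P_o\cap P_x)]\ =\ \frac{[P_o:P_o\cap P_x]}{[P_{o,r}:P_{o,r}\cap P_x]} .
\]
Combining this with the two bounds above and with $[P_{o,r}:P_{o,r}\cap P_x]\ge [U_{o,r}^-:U_{o,r}^-\cap P_x]=\prod_\beta q^{\max(0,\,k_\beta-r)}$, and using the elementary identity $\max(k-1,0)-\max(0,k-r)=\max(\min(k,r)-1,0)$ valid for integers $k\ge 0$ and $r\ge 1$, yields the upper bound $[P_o:P_{o,r}(P_o\cap P_x)]\le |\mathcal G_o|\prod_\beta q^{\max(\min(k_\beta,r)-1,\,0)}$. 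For the matching lower bound I will introduce the concave function $h\colon\widetilde{\Phi}\to\R$ with $h(0)=0$ and $h(\alpha)=\min\!\bigl(r,\max(0,-\alpha(x))\bigr)$, and show $P_{o,r}(P_o\cap P_x)\subseteq P_h$. The inclusion $P_{o,r}=P_{f_o+r}\subseteq P_h$ is immediate from $f_o+r\ge h$; for $P_o\cap P_x\subseteq P_h$ one writes an element of $P_o\cap P_x$ as $u^+v^-n$ with $u^+\in U_o^+\subseteq U_x^+\subseteq P_x$, $v^-\in U_o^-$, $n\in N_o$ (Proposition \ref{prop:fprod2} and the remark after it), runs the same cancellation as in the proof of Lemma \ref{lem:intersect} (using that $N\cap U^-U^+$ and $U^+\cap U^-$ are trivial) to deduce $v^-\in U_o^-\cap P_x=\prod_\beta U_{-\beta,k_\beta}$ and $n\in N_o\cap P_x$, and then observes that $U_o^+=U_h^+$, that $\prod_\beta U_{-\beta,k_\beta}\subseteq U_h^-$ because $\min(r,k_\beta)\le k_\beta$, and that $N_o\cap P_x$ — which fixes $o$ and $x$, hence the geodesic $[o,x]$ pointwise, and so is generated over $T_0$ by the reflections $s_\alpha$ with $\alpha\in\Phi$, $\alpha(x)=0$ — lies in $P_h$ because $h(\pm\alpha)=0$ for such $\alpha$ and $T_{h(0)}=T_0$. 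Granting $P_{o,r}(P_o\cap P_x)\subseteq P_h$, the $U_o^-$-trick again gives
\[
[P_o:P_{o,r}(P_o\cap P_x)]\ \ge\ [U_o^-:U_o^-\cap P_h]\ =\ \prod_{\beta\in\Phi^+} q^{\min(r,k_\beta)}\ \ge\ \prod_{\beta\in\Phi^+} q^{\max(\min(r,k_\beta)-1,\,0)} ,
\]
using $U_o^-\cap P_h=U_h^-=\prod_\beta U_{-\beta,\min(r,k_\beta)}$. Taking $\gamma=|\mathcal G_o|$ then works for all four inequalities.

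The step I expect to be the main obstacle is the inclusion $P_o\cap P_x\subseteq P_h$: the root-group contributions proceed exactly as in Lemma \ref{lem:intersect}, but controlling $N_o\cap P_x$ requires the geometric input that an element of $N$ fixing two points of $\A$ fixes the segment joining them, together with the standard description of pointwise fixers under the action of the (Coxeter) affine Weyl group on $\A$. Everything else is bookkeeping with ceilings and the two cited lemmas.
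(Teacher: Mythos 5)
Your proof is correct, but it diverges from the paper's at several points, and the way you organize the second pair of inequalities makes the argument harder than it needs to be.

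The paper's proof rests entirely on one reduction: it factors
$[P_o:P_o\cap P_x]=[P_o:P_{o+}(P_o\cap P_x)]\cdot[P_{o+}:P_{o+}\cap P_x]$
via the second isomorphism theorem, identifies $P_{o+}\cap P_x=P_f$ with $f=\max(f_o^*,f_x)$ by Lemma~\ref{lem:intersect}, evaluates $[P_{o+}:P_f]$ by Lemma~\ref{lem:crucial}, and squeezes the first factor between $1$ and $\gamma=[P_o:P_{o+}]$; for the second pair it replaces $f$ by $\min(r,f)$ and notes $P_{o,r}(P_{o+}\cap P_x)=P_{\min(r,f)}$. All the Weyl-group considerations disappear because $P_{o+}$ meets $N$ only in $T_1$. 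You instead get the lower bounds by restricting to $U_o^-\le P_o$, which is genuinely more elementary (it uses only Propositions~\ref{prop:bijProd} and~\ref{prop:fprod}, not Lemma~\ref{lem:intersect}), and you get the first upper bound essentially as the paper does. The trade-off shows up in the second lower bound: your inclusion $P_o\cap P_x\subseteq P_h$ forces you to control $N_o\cap P_x$, for which you invoke the (correct but nontrivial) Steinberg--Chevalley theorem that the stabilizer of a point in the closed fundamental chamber is generated by the reflections fixing it, together with a check that those reflections lift into $P_h$. That is a real extra input that the paper deliberately sidesteps.

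Worth noting: your own machinery already contains a cleaner route to the second lower bound that avoids $P_h$ and Chevalley entirely. From the division identity
$[P_o:P_{o,r}(P_o\cap P_x)]=[P_o:P_o\cap P_x]\big/[P_{o,r}:P_{o,r}\cap P_x]$,
use your lower bound $[P_o:P_o\cap P_x]\ge\prod_\beta q^{k_\beta}$ together with the upper bound $[P_{o,r}:P_{o,r}\cap P_x]\le[P_{o,r}:P_{\max(f_o+r,\,f_x)}]=\prod_\beta q^{\max(0,k_\beta-r)}$, the latter from Lemma~\ref{lem:crucial} applied to the pair $(f_o+r,\max(f_o+r,f_x))$ (both concave, equal and positive at $0$ once $r\ge1$, and $P_{\max(f_o+r,f_x)}\subseteq P_{o,r}\cap P_x$ trivially). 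This gives exactly $\prod_\beta q^{\min(r,k_\beta)}$ by the same ceiling identity you already use, with no appeal to the structure of $N_o\cap P_x$. In short: your argument works, it is a genuinely different (and in places more elementary) decomposition than the paper's, but the $P_h$-inclusion step imports a reflection-group theorem that the paper's use of $P_{o+}$ renders unnecessary.
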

\begin{proof}
    Using the normality of $P_{o+}$ in $P_o$ and the second isomorphism theorem we find
    \begin{align*}
    [P_o:P_x\cap P_o] &= [P_o:P_{o+}(P_x\cap P_o)][P_{o+}(P_x\cap P_o):P_x\cap P_o] \\
    &= [P_o:P_{o+}(P_x\cap P_o)][P_{o+}:(P_x\cap P_o)\cap P_{o+}]\\
    &= [P_o:P_{o+}(P_x\cap P_o)][P_{o+}:P_x\cap P_{o+}].
    \end{align*}
    Similarly, if we fix $r\in \N$ we find
    \begin{align*}
    [P_o:P_{o,r}(P_x\cap P_o)] &=[P_o:P_{o+}(P_x\cap P_o)][P_{o+}(P_x\cap P_o):P_{o,r}(P_x\cap P_o))]\\
    &= [P_o:P_{o+}(P_x\cap P_o)][P_{o+}:(P_{o,r}(P_x\cap P_o))\cap P_{o+}] \\
    &= [P_o:P_{o+}(P_x\cap P_o)][P_{o+}:(P_{o,r}(P_x\cap P_{o+}))].
    \end{align*}
    Now define $f\coloneqq\max(f_o^*,f_x)$ and let $\gamma\coloneqq [P_o:P_{o+}]$, then we can use Lemma \ref{lem:intersect} to see that
    \[
    [P_{o+}:P_f]\leq [P_o:P_x\cap P_o]\leq \gamma [P_{o+}:P_f].
    \]
    The first part of the lemma now follows from Lemma \ref{lem:crucial}. Now note that it is clear that $g\coloneqq\min(r,f)$ defines a concave function and in fact $P_{o,r}(P_x\cap P_{o+})=P_{g}$ and the second part of the lemma follows in the same way.
\end{proof}
\begin{remark}
    This lemma is not particular to the origin. In fact we can choose a different constant $\gamma^\prime$ such that a similar lemma holds for any two points in the apartment $\A$.
\end{remark}

\subsection{The Bruhat-Tits building}
We now define the \emph{Bruhat-Tits building} \\${\mathcal{B}(G)=\mathcal{B}}$ of $G$ as the quotient space $G\times \A/\sim$, where the equivalence relation $\sim$ is defined by
\[
(g,x)\sim (h,y) \text{ if and only if there exists }n\in N\text{ such that }g^{-1}hn\in P_x\text{ and }y=nx.
\]
It is clear that we can identify $\A$ with the points in $\mathcal{B}$ that can be represented by pairs of the form $(e,x)$, where $e$ is the identity in $G$. The simplicial structure on $\A$ gives rise to a simplicial structure on $\mathcal{B}$ and there is an obvious action of $G$ on $\mathcal{B}$ by simplicial automorphisms. We summarize some properties of this action in the following proposition.
\begin{proposition}\label{prop:properties}
    The action of $G$ on $\mathcal{B}$ has the following properties.
    \begin{enumerate}[(i)]
        \item For any $x,y\in\mathcal{B}$, there exists $g\in G$ such that $gx,gy\in\A$.
        \item The stabilizer of a point $x\in\A$ is $P_x$. Moreover, $P_x$ stabilizes the smallest facet in which it is contained.
        \item The (not necessarily pointwise) stabilizer of $\A$ is $N$.
        \item $C$ is a fundamental domain for the action of $N$ on $\A$  and $C^+$ is a fundamental domain for the action of $P_o$ on $\mathcal{B}$.
        \item Let $x,y\in\mathcal{B}$ and let $g,g^\prime\in G$ such that $gx,gy,g^\prime x,g^\prime y\in\A$, then there exists $n\in N$ such that $ngx=g^\prime x$ and $ngy=g^\prime y$.
    \end{enumerate}
\end{proposition}
Using part (i) of the above proposition, we can extend the notion of a special vertex to $\mathcal{B}$. Moreover, from part (ii) it follows that if $x\in \A$ and $g\in G$ is such that $gx\in\A$, then $P_{gx}=gP_xg^{-1}$. This allows us to define $P_x$ for any $x\in\mathcal{B}$ by $P_x=g^{-1}P_{gx}g$ where $g\in G$ is such that $gx\in\mathbb{A}$. We can similarly define the Moy-Prasad filtrations $P_{x,r}$ and $P_x^+$ for arbitrary $x\in\mathcal{B}$. We now give some properties of the Moy-Prasad filtrations.
\begin{proposition}
    Let $x\in\mathcal{B}$, then for any $r\in\R_{\geq 0}$ we have that $P_{x,r}$ is an open compact subgroup of $G$ which is normal in $P_x$ and the collection of all $P_{x,r}$ for varying $r$ gives a neighbourhood basis of the identity in $G$.
\end{proposition}

Given a subset $D$ of $\mathcal{B}$, we write $D_0$ for the set of vertices in $D$. The typing of $\A$ extends to a typing $\lambda:\mathcal{B}_0\to\{0,\dots,d\}$. We note that the orbits of the action of $G$ on $\mathcal{B}_0$ are precisely the sets of vertices in $\mathcal{B}$ of a given type. Given a type $i\in\{0,\dots,d\}$ and a subset $D$ of $\mathcal{B}_0$, we will write $D^i\subseteq D$ for the subset of vertices of type $i$.

The Moy-Prasad filtration subgroups of a Levi subgroup inside of $G$ are essentially obtained from those in $G$.
\begin{lemma}\label{lem:parabolic}
    Let $Q\leq G$ be a parabolic subgroup with Levi factor $M\leq Q$. Then there exists points $x\in \mathcal{B}(G)$ and $\proj(x)\in\mathcal{B}(M)$, where $\mathcal{B}(M)$ is the Bruhat-Tits building of $M$, such that
    \[
    P_{\proj(x),r}=P_{x,r}\cap M
    \]
    for all $r\in\N$. We may choose $x$ to be a special vertex. Moreover, if $T\leq M$ we may in addition choose $x\in \A$.
\end{lemma}
\begin{proof}
    This follows by combining Lemma 8.5.19 in \cite{kaletha} with Proposition 9.8.3 in \cite{kaletha}.
\end{proof}
\begin{remark}
    Note that $M$ need not satisfy the same assumptions we put on $G$, however Bruhat-Tits buildings are defined in greater generality than we have presented here.
\end{remark}

\section{Distances and balls in the Bruhat-Tits building}\label{sec:building}
In this section we introduce a notion of distance between vertices of the Bruhat-Tits building $\mathcal{B}$. This notion is closely related to the \emph{simplicial distance} on the building, obtained by considering the distance in the graph underlying the simplicial structure on $\mathcal{B}$. We provide some estimates for the cardinality of balls of vertices within a certain distance of a given vertex. These cardinalities will later provide us with a lower bound for the canonical dimension.

\begin{definition}
    Let $x,y\in\mathcal{B}_0$ with $x\neq y$, then we define a distance $d(x,y)$ by taking $g\in G$ such that $gx,gy\in\A$. We put $d(x,y)=r$ if the maximal number of parallel walls that separate $gx$ and $gy$ is $r-1$. We also put $d(x,x)=0$. The vertices $x$ and $y$ are called \emph{adjacent} if $d(x,y)=1$.
\end{definition}
\begin{remark}
    It is not yet clear that $d$ is well-defined, however we will prove this in Lemma \ref{lem:metric}.
\end{remark}
\begin{definition}
    Let $x,y\in\mathcal{B}_0$, we write $d^\prime(x,y)$ for the minimal length of a path from $x$ to $y$ in the graph underlying the simplicial structure of $\mathcal{B}$. We call this the \emph{simplicial distance} between $x$ and $y$. 
\end{definition}
\begin{remark}
    It is clear that the simplicial distance is a metric.
\end{remark}
\begin{proposition}\label{prop:distance}
    Let $x,y\in\mathcal{B}_0$, then 
    \[
    d^\prime(x,y)\geq d(x,y).
    \]
    Moreover, we have $d(x,y)=1$ if and only if $d^\prime(x,y)=1$.
\end{proposition}
\begin{proof}
    We may assume $x,y\in\A$. Suppose $d(x,y)=n+1$ and let $\partial a_1,\dots,\partial a_n$ be distinct parallel walls separating $x$ and $y$, then any path from $x$ to $y$ along edges in $\mathcal{B}$, must cross all of these walls and hence must have a vertex on each of these walls. Hence such a path contains at least $n$ vertices unequal to $x$ and $y$ and thus has length at least $n+1$, so $d^\prime(x,y)\geq n+1=d(x,y)$.

    If $d(x,y)>1$, then there is a wall separating $x$ and $y$. Any path from $x$ to $y$ in the graph underlying $\mathcal{B}$ has to contain a vertex on this wall, so $d^\prime(x,y)>1$. Conversely, if $d(x,y)=1$, then there is no wall separating $x$ and $y$, however any vertex unequal to $x$ and $y$ on a shortest path from $x$ to $y$ must lie on a wall separating $x$ and $y$. Hence no such vertex can exist and $d^\prime(x,y)=1$.
\end{proof}
\begin{remark}
    This proposition makes it clear that our notion of distance is very closely related to the simplicial distance in $\mathcal{B}_0$. In fact, it is shown in \cite{gao} that if $G$ is of classical type, then our distance agrees with the simplicial distance. This is not true in general as type $G_2$ provides a counterexample.
\end{remark}
\begin{lemma}\label{lem:metric}
    The function $d:\mathcal{B}_0\times\mathcal{B}_0\to \R_{\geq 0}$ is well-defined, symmetric and positive definite.
\end{lemma}
\begin{proof}
    The function is well defined. To see this, let $x,y\in\mathcal{B}_0$ and let $g,g^\prime\in G$ such that $gx,gy,g^\prime x,g^\prime y\in \A$, then by \ref{prop:properties} there exists $n\in N$ such that $ngx=g^\prime x$ and $ngy=g^\prime y$. The action of $N$ on $\A$ is generated by reflections in the walls in $\A$ and such reflections preserve sets of parallel walls. Hence the maximal number of parallel walls that separate $gx$ and $gy$ is the same as the maximal number of parallel walls that separate $g^\prime x$ and $g^\prime y$.

    It is clear that $d$ is symmetric and that $d(x,y)=0$ only when $x=y$. 
\end{proof}
\begin{lemma}
    Let $x,y,z\in\mathcal{B}_0$ and assume there exists $g\in G$ such that $gx,gy,gz\in\A$, then these point satisfy the triangle inequality:
    \[
    d(x,z)\leq d(x,y)+d(y,z).
    \]
    In particular, if $x,y,z\in\mathcal{B}_0$ are such that at least one pair of them is adjacent, then these points satisfy the triangle inequality.
\end{lemma}
\begin{proof}
    Let $g$ be such that $gx,gy,gz\in \A$. We may assume these three points are distinct. Let $r=d(x,z)$, then there exist parallel walls $\partial a_1,\dots,\partial a_{r-1}$ separating $gx$ and $gz$. Because these walls are parallel at most one of them may contain $gy$. Every other $\partial a_i$ must either separate $gx$ from $gy$ or it separates $gy$ from $gz$. Let $r_1$ be the number of indices $i$ such that $\partial a_i$ separates $gx$ from $gy$ and similarly let $r_2$ be the number of indices $i$ such that $\partial a_i$ separates $gy$ from $gz$, then we have $r_1+r_2\geq r-2$. By definition we also have $d(x,y)\geq r_1+1$ and $d(y,z)\geq r_2+1$ such that
    \[
    d(x,y)+d(y,z)\geq r_1+r_2+2\geq r=d(x,z)
    \]
    completing the proof. 
    
    In general if $x,y,z\in\mathcal{B}_0$ are such that at least one pair of them is adjacent, say $x$ and $y$ are adjacent, then $x$ and $y$ are contained in a common facet and we may choose $x^\prime\in\mathcal{B}$ internal to this common facet. Now by Proposition \ref{prop:properties} there exists $g\in G$ such that $gx^\prime,z\in A$ and thus $gx,gy,gz\in \A$. Therefore the three points $x,y$ and $z$ satisfy the triangle inequality.
\end{proof}
\begin{remark}
    We expect that the function $d$ satisfies the triangle inequality in full generality and thus $d$ is a metric. However, we will only need to make use of the triangle inequality when one pair of points is adjacent, so this lemma suffices for our purposes.
\end{remark}
\begin{definition}
    Let $x\in\mathfrak{B}_0$ be a vertex and let $r\in\Z$, then we define the ball around $x$ of radius $r$ to be
    \[
    B(x,r)\coloneqq \{y\in\mathfrak{B}_0\mid d(x,y)\leq r\}.
    \]
    Note that $B(x,r)=\varnothing$ if $r<0$.
\end{definition}

\begin{lemma}\label{lem:contain}
    Let $r_1,r_2\in\N$ such that $r_1>r_2$ and let $x,y\in \mathcal{B}_0$ be such that $d(x,y)\leq r_1-r_2$. Then we have $P_{x,r_1}\subseteq P_{y,r_2}$. In particular, by taking $r_2=0$, it follows that $P_{x,r_1}$ stabilizes $B(x,r_1-1)$ pointwise.
\end{lemma}
\begin{proof}
    By conjugating we may assume that $x,y\in\A$. It then suffices to show that $f_{x}(\alpha)+r_1\geq f_y(\alpha)+r_2$ for all $\alpha\in\widetilde{\Phi}$. Now clearly $f_{x}(0)+r_1=r_1\geq r_2=f_y(0)$. 
    Suppose that for some $\alpha\in\Phi$, $f_x(\alpha)+r_1<f_{y}(\alpha)+r_2$, then $r_1-r_2<\alpha(x)-\alpha(y)$. Thus for $1\leq m\leq r_1-r_2-1$, we have that $\partial(\alpha-\lceil \alpha(y)\rceil-m)$ defines a wall separating $x$ and $y$. Moreover, if $\alpha(y)\in \Z$, then $\partial(\alpha-\alpha(y)-(r_1-r_2))$ is a wall separating $x$ and $y$ and otherwise the wall $\partial(\alpha-\lceil\alpha(y)\rceil)$ separates $x$ and $y$. This means that there are $r_1-r_2$ parallel walls separating $x$ and $y$, which contradicts $d(x,y)\leq r_1-r_2$. We conclude that $f_{x}(\alpha)+r_1\geq f_y(\alpha)+r_2$ for all $\alpha\in\widetilde{\Phi}$, which finishes the proof.
\end{proof}
The following lemma characterizes the intersection of a ball of a certain radius $r$ centered on the origin $o$ with the fundamental Weyl chamber $C^+\subseteq\A$ and will prove useful later.
\begin{lemma}\label{lem:polytope}
    Let $r\in\N$, then 
    \[
    B(o,r)\cap C^+ = \left(\left(-\alpha_0+r\right)\cap\bigcap_{i=1}^d \alpha_i\right)_0 =(rC)_0\subseteq\A_0.
    \]
\end{lemma}
\begin{proof}
    It is clear that $C^+=\bigcap_{i=1}^d \alpha_i$ and hence it suffices to show that if $x\in C^+_0$, then $x\in B(o,r)$ if and only if $x\in -\alpha_0+r$. Let $x\in C^+_0$ and suppose $x\notin -\alpha_0+r$, then $\alpha_0(x)>r$ and it follows that for all $1\leq i\leq r$ the wall $\partial(-\alpha_0+i)$ separates $o$ and $x$ and hence $d(o,x)\geq r+1$ and thus $x\notin B(o,r)$. 

    Now conversely, let $x\in C^+_0$ and suppose $x\notin B(o,r)$. Then $x$ is separated from $o$ by at least $r$ parallel walls. Let $\alpha\in \Phi^+$ be a root to which these walls are parallel. Let $\partial(\alpha-k)$ be one of these walls for some $k\in\Z$, then because $x\in C^+$ it is clear that $k>0$ and then $\partial(\alpha-i)$ is a separating wall for each $1\leq i\leq k$. Since there must exist at least $r$ such walls, it follows that $\partial(\alpha-r)$ separates $x$ and $o$ and thus $\alpha(x)>r$. Since $x\in C^+$ we have $\alpha_0(x)\geq \alpha(x)>r$ and thus $x\notin -\alpha_0+r$. This proves the first equality and the second equality is obvious.
\end{proof}

\subsection{The cardinality of a ball}
We will find it useful to estimate the number of all vertices of a specific type inside of a ball in terms of the cardinality of a different ball.
\begin{lemma}
    Let $x\in\mathcal{B}_0$ and let $i,j\in\{0,\dots,d\}$ be types, then there exists an injection of $B(x,r)^{i}$ into $B(x,r+1)^{j}$.
\end{lemma}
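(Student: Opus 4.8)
The claim is that for any vertex $x \in \mathcal{B}_0$ and any two types $i,j \in \{0,\dots,d\}$, there is an injection $B(x,r)^i \hookrightarrow B(x,r+1)^j$. The natural strategy is to exhibit, for each vertex $y$ of type $i$ in $B(x,r)$, a vertex of type $j$ that is adjacent to $y$ (hence at distance at most $r+1$ from $x$ by the triangle inequality), and to do this in a way that is injective as $y$ varies. The key structural input is that the vertices of type $j$ adjacent to a given vertex $y$ of type $i$ are governed by the local geometry at $y$: since $\mathcal{G}_y = P_y/P_{y+}$ is the group of rational points of a reductive group over $k$, the alcoves (chambers) containing $y$ as a vertex — equivalently, the vertices adjacent to $y$ — are parametrized by the corresponding flag variety, and in particular for each type $j \neq i$ there is always at least one adjacent vertex of type $j$.

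First I would reduce to a combinatorial statement in a single apartment. By Proposition~\ref{prop:properties}(i), after translating by an element of $G$ we may assume $x = o$. For each $y \in B(o,r)^i$, I want to make a canonical choice of an adjacent vertex $c(y)$ of type $j$. The cleanest way to get injectivity is to choose, for each $y$, a chamber (alcove) $A_y$ containing $y$ in a way that depends only on $y$, and then let $c(y)$ be the unique type-$j$ vertex of $A_y$; then $y$ is recoverable from $c(y)$ as the unique type-$i$ vertex of $A_{c(y)}$ — but this requires the assignment $y \mapsto A_y$ to be compatible in both directions, which is delicate. A more robust approach: fix once and for all a chamber $A_0$ of the standard apartment, and define $c(y)$ by transporting a fixed "type-$i$-vertex-to-type-$j$-vertex within a chamber" rule via any apartment containing both $o$ and $y$; then argue that different valid choices give the same answer, and that the map is injective because $d(x,c(y))$ together with the type data pins down $y$. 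The map $c$ is injective provided $y$ can be uniquely recovered: indeed, if $c(y) = c(y') =: z$, both $y$ and $y'$ are type-$i$ vertices adjacent to $z$ lying in the chosen chamber through $z$, and a chamber has a unique vertex of each type, so $y = y'$.

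The main obstacle is making the choice $y \mapsto c(y)$ genuinely well-defined and simultaneously injective — a priori there may be many type-$j$ vertices adjacent to $y$, and many chambers through $y$, so one must pick coherently. I expect the right mechanism is a \emph{gallery-distance retraction}: use the retraction of $\mathcal{B}$ onto the standard apartment centered at a fixed chamber $A_0$, and among the chambers through $y$ select the one closest (in gallery distance) to $A_0$, breaking ties by a fixed total order on the set of types of panels; this produces a canonical chamber $A_y$, and one sets $c(y)$ to be its type-$j$ vertex. Injectivity then follows from the chamber-uniqueness-of-types remark above once one checks that $A_{c(y)} = A_y$ for this canonical choice — i.e. that the canonically-chosen chamber through $z = c(y)$ is again $A_y$ — which holds because $A_y$ is already a chamber through $z$ and the selection rule is the same. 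I would also need the basic fact, immediate from the BN-pair structure of $\mathcal{G}_y$, that a type-$j$ vertex adjacent to $y$ exists at all (so $c$ is defined on all of $B(o,r)^i$). Finally, $d(o, c(y)) \le d(o,y) + d(y,c(y)) \le r + 1$ by the triangle inequality for $d$ (noted in the remark after the definition of $d$), and $c(y)$ has type $j$ by construction, so $c(y) \in B(o,r+1)^j$, completing the argument.
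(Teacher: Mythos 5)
You have correctly identified the crux: the map ``$y \mapsto$ type-$j$ vertex of a chosen alcove through $y$'' is not automatically injective, since two distinct alcoves can share their type-$j$ vertex, so the alcove selection must be made coherently. The paper's proof in fact elides exactly this point --- it observes that $y\mapsto C_y$ is injective and then simply asserts that $y\mapsto y_j$ is --- so you are raising a genuine issue.

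However, the fix you propose does not work. You take $A_y$ to be the chamber through $y$ at minimal gallery distance from a fixed base chamber $A_0$, and claim $A_{c(y)} = A_y$ ``because $A_y$ is already a chamber through $z = c(y)$ and the selection rule is the same.'' That is not a proof: $A_y$ is \emph{a} chamber through $c(y)$, not necessarily the one closest to $A_0$. Indeed the claim fails already for $G=\SL_2$, where the building is the $(q+1)$-regular tree. Take $i=0$, $j=1$, let $o$ be a type-$0$ vertex, $A_0=\{o,w\}$ for a type-$1$ neighbour $w$, and let $y\neq o$ be another type-$0$ neighbour of $w$. Then $A_y=\{y,w\}$ (the unique chamber through $y$ at gallery distance $1$ from $A_0$), so $c(y)=w$; but $A_w=A_0\neq A_y$, and moreover $c(o)=w=c(y)$ while $o\neq y$, so the map is not injective. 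The underlying difficulty is that a selection rule pointing ``toward $A_0$'' makes vertices on opposite sides of a type-$j$ vertex near $A_0$ collide; a rule that has a chance must point ``away from $x$'' (for instance, arranging $d(x,c(y))=d(x,y)+1$), and even then one must actually argue that $y$ is recoverable from $c(y)$, which is no longer automatic outside the tree case. If a precise injection proves elusive, note that everything downstream (Corollary~\ref{cor:bound} and the asymptotics) only requires an inequality $|B(x,r)^i| \geq \kappa\,|B(x,r-1)|$ for some constant $\kappa>0$ depending only on $G$; this follows from a straightforward double count of incident vertex--alcove pairs, with no need for an exact injection.
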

\begin{proof}
    For each $y\in B(x,r)^i$ pick an alcove $C_y$ containing $y$. It is clear that if $y\neq y^\prime\in B(x,r)^i$, then $C_y\neq C_{y^\prime}$, since alcoves contain exactly one vertex of each type. Given $y\in B(x,r)^{i}$, let $y_j$ be the vertex of $C_y$ of type $j$, then $y$ and $y_j$ are adjacent and thus $d(x,y_j)\leq d(x,y)+1\leq r+1$. We conclude that $y\mapsto y_j$ defines an injection from $B(x,r)^{i}$ into $B(x,r+1)^{j}$.
\end{proof}
\begin{corollary}\label{cor:bound}
Let $r\in \N$ and $0\leq i\leq d$, then
    \[
    |B(x,r)^{i}|\geq \frac{1}{d+1}|B(x,r-1)|.
    \]
\end{corollary}
\begin{proof}
    We have
    \[
    B(x,r-1) = \bigsqcup_{j=0}^d B(x,r-1)^j
    \]
    and by the lemma above this injects into
    \[
    \bigsqcup_{j=0}^d B(x,r)^{i}
    \]
    and hence
    \[
    |B(x,r-1)|\leq (d+1)|B(x,r)^{i}|,
    \]
    from which the result follows.
\end{proof}
The fundamental result which allows us to compute the cardinalities of balls in the Bruhat-Tits building is the following.
\begin{proposition}\label{prop:formula}
There exists a constant $\gamma>0$ such that for all $r\in\N$
    \[
    \sum_{x\in B(o,r)\cap C^+}\prod_{\alpha\in\Phi^+} q^{\max(\lceil\alpha(x)\rceil-1,0)}\leq |B(o,r)| \leq \gamma \sum_{x\in B(o,r)\cap C^+}\prod_{\alpha\in\Phi^+} q^{\max(\lceil\alpha(x)\rceil-1,0)},
    \]
    Moreover, let $r^\prime\in\N$, then
    \begin{align*}
    \sum_{x\in B(o,r)\cap C^+}\prod_{\alpha\in\Phi^+} q^{\max(\min(\lceil\alpha(x)\rceil,r^\prime)-1,0)}&\leq |P_{o,r^\prime}\backslash B(o,r)| 
    \\&\leq \gamma \sum_{x\in B(o,r)\cap C^+}\prod_{\alpha\in\Phi^+} q^{\max(\min(\lceil\alpha(x)\rceil,r^\prime)-1,0)}
    \end{align*}
\end{proposition}
\begin{proof}
    The action of $G$ on $\mathcal{B}_0$ preserves distances and because $P_o$ stabilizes $o$ we thus have $P_oB(o,r)=B(o,r)$. Now using Proposition \ref{prop:properties} it follows that
    \[
    B(o,r) = \bigsqcup_{x\in B(o,r)\cap C^+}P_ox.
    \]
    By the orbit-stabilizer theorem we can conclude
    \[
    |B(o,r)| = \sum_{x\in B(o,r)\cap C^+} [P_o:P_o\cap P_x].
    \]
    Furthermore, using the normality of $P_{o,r^\prime}$ in $P_o$ it is easy to see that $|P_{o,r^\prime}\backslash P_ox| = [P_o:P_{o,r^\prime}(P_o\cap P_x)]$ for all $x\in\mathcal{B}_0$. Therefore
    \[
    |P_{o,r^\prime}\backslash B(o,r)| = \sum_{x\in B(o,r)\cap C^+} [P_o:P_{o,r^\prime}(P_o\cap P_x)].
    \]
    The proposition now follows from Lemma \ref{lem:quotientSize}.
\end{proof}
\begin{remark}
    In \cite{gao} the cardinality of balls is explicitly computed using the simplicial distance, as opposed to our distance function (see Theorem 8.1 in \cite{gao}). Our proof is essentially the same, except we forego an explicit computation in favour of an approximate one. This will be sufficient for our purposes and will save us some work.
\end{remark}
\begin{corollary}\label{cor:ballLowBound}
    Let $r\in\N$, then
    \[
    |B(o,r)|\geq q^{-|\Phi^+|}\max_{x\in B(o,r)\cap C^+} q^{2\rho(x)},
    \]
    where 
    \[
    2\rho = \sum_{\alpha\in \Phi^+}\alpha
    \]
    is the sum of all positive roots.
\end{corollary}
\begin{proof}
    Using the previous proposition we find
    \begin{align*}
    |B(o,r)|&\geq \sum_{x\in B(o,r)\cap C^+}\prod_{\alpha\in\Phi^+} q^{\max(\lceil\alpha(x)\rceil-1,0)}\\
    &\geq \max_{x\in B(o,r)\cap C^+}\prod_{\alpha\in\Phi^+} q^{\max(\lceil\alpha(x)\rceil-1,0)}\\
    &\geq \max_{x\in B(o,r)\cap C^+}\prod_{\alpha\in\Phi^+} q^{\lceil\alpha(x)\rceil-1}\\
    &\geq \max_{x\in B(o,r)\cap C^+}\prod_{\alpha\in\Phi^+} q^{\alpha(x)-1}\\
    &=q^{-|\Phi^+|}\max_{x\in B(o,r)\cap C^+} q^{2\rho(x)}.
    \end{align*}
\end{proof}
\begin{lemma}\label{lem:latticeCount}
    There exists a constant $\Gamma>0$ such that $|B(o,r)\cap C^+|\leq \Gamma r^d$ for all $r\in \N$.
\end{lemma}
\begin{proof}
    By Lemma \ref{lem:polytope} we have $B(o,r)\cap C^+=(rC)_0$. We fix an isomorphism between $\A$ and $\R^d$ which allows us to define a notion of distance and volume on $\A$. Now if $X\subseteq \A$, let us write $\diam(X)$ for the diameter of $X$ and let us write $\vol(X)$ for the volume of $X$ whenever this makes sense. Because the vertices in $\A$ form a regular lattice there exists $\varepsilon>0$ such that no two balls of radius $\varepsilon$ centered at two distinct vertices in $\A$ intersect. Let us write 
    \[
    rC+\varepsilon\coloneqq \bigcup_{x\in rC} b(x,\varepsilon),
    \]
    where $b(x,\varepsilon)$ is the Euclidean ball centered at $x$ with radius $\varepsilon$. Now clearly
    \[
    \bigsqcup_{x\in (rC)_0} b(x,\varepsilon) \subseteq rC+\varepsilon
    \]
    and thus 
    \[
    \vol\left(\bigsqcup_{x\in (rC)_0} b(x,\varepsilon)\right)\leq \vol(rC+\varepsilon).
    \]
    Clearly
    \[
    \vol(rC+\varepsilon)\leq c_d \diam(rC+\varepsilon)^d = c_d(r\diam(C)+2\varepsilon)^d
    \]
    where $c_d$ is a constant depending only on the dimension $d$ of $\A$. Moreover,
    \[
    \vol\left(\bigsqcup_{x\in (rC)_0} b(x,\varepsilon)\right)=c_d|(rC)_0|(2\varepsilon)^d.
    \]
    It follows that
    \[
    |(rC)_0|\leq \left(\frac{r\diam(C)+2\varepsilon}{2\varepsilon}\right)^d
    \]
    and the lemma now follows easily.
\end{proof}
\begin{corollary}\label{cor:upBound}
    Let $r,r^\prime\in\N$, then
    \[
    |P_{o,r^\prime}\backslash B(o,r)| \leq \gamma\Gamma r^d q^{r^\prime|\Phi^+|}.
    \]
\end{corollary}
\begin{proof}
    By Proposition \ref{prop:formula} we have
    \[
    |P_{o,r^\prime}\backslash B(o,r)| \leq \gamma \sum_{x\in B(o,r)\cap C^+}\prod_{\alpha\in\Phi^+} q^{\max(\min(\lceil\alpha(x)\rceil,r^\prime)-1,0)}.
    \]
    Now clearly
    \[
    q^{\max(\min(\lceil\alpha(x)\rceil,r^\prime)-1,0)}\leq q^{r^\prime}
    \]
    and thus
    \begin{align*}
    \gamma \sum_{x\in B(o,r)\cap C^+}\prod_{\alpha\in\Phi^+} q^{\max(\min(\lceil\alpha(x)\rceil,r^\prime)-1,0)}&\leq \gamma \sum_{x\in B(o,r)\cap C^+}q^{r^\prime|\Phi^+|}\\
    &=\gamma |B(o,r)\cap C^+| q^{r^\prime|\Phi^+|}.
    \end{align*}
    The corollary now follows by applying Lemma \ref{lem:latticeCount}.
\end{proof}
Recall that $2\rho$ is the sum of the positive roots $\Phi^+$. We can expand $2\rho$ in terms of the basis $\Pi$ to obtain
\[
2\rho= \sum_{i=1}^d c^\prime_i \alpha_i
\]
for some positive integers $c^\prime_i$. Recall also that we have previously similarly defined the $c_i$ to be the coefficients of $\alpha_0$ with respect to $\Pi$.
\begin{corollary}
    If we write $D\coloneqq \max_{i}\frac{c^\prime_i}{c_i}$, then for all $r\in \N$ we have
    \[
    |B(o,r)|\geq q^{rD-|\Phi^+|}.
    \]
\end{corollary}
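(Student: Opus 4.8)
The plan is to feed one carefully chosen vertex into Corollary~\ref{cor:ballLowBound}. Pick an index $i_0\in\{1,\dots,d\}$ attaining the maximum in the definition of $D$, so that $D=c'_{i_0}/c_{i_0}$, and set $x_r\coloneqq r\,v_{i_0}$. I claim that $x_r\in B(o,r)\cap C^+$ and that $2\rho(x_r)=rD$. Granting this, Corollary~\ref{cor:ballLowBound} yields
\[
|B(o,r)|\ \geq\ q^{-|\Phi^+|}\max_{x\in B(o,r)\cap C^+}q^{2\rho(x)}\ \geq\ q^{-|\Phi^+|}q^{2\rho(x_r)}\ =\ q^{rD-|\Phi^+|},
\]
which is precisely the asserted inequality.

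It remains to establish the claim. Since $v_{i_0}=\omega_{i_0}/c_{i_0}$ and $\alpha_j(\omega_{i_0})=\delta_{ji_0}$, we have $\alpha_j(x_r)=r\delta_{ji_0}/c_{i_0}$ for $1\leq j\leq d$ and $\alpha_0(x_r)=r\,\alpha_0(v_{i_0})=r$. Linearity of $2\rho=\sum_j c'_j\alpha_j$ then gives $2\rho(x_r)=r\sum_j c'_j\alpha_j(v_{i_0})=rc'_{i_0}/c_{i_0}=rD$, which is the second half of the claim. For the first half, the inequalities $\alpha_j(x_r)\geq 0$ (for $1\leq j\leq d$) and $\alpha_0(x_r)=r\leq r$ reduce us, via Lemma~\ref{lem:polytope}, to checking that $x_r$ is a vertex of $\mathcal{B}$, i.e.\ $x_r\in\A_0$. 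This is the only point that requires an argument. The point $x_r$ lies on the $d$ hyperplanes $\{\alpha_j=0\}$ (for $j\neq i_0$) and $\{\alpha_0=r\}$, each of which is a wall $\partial a$ for a suitable $a\in\Sigma$ (namely $a=\alpha_j$ and $a=\alpha_0-r$). The $d$ affine functions $\{\alpha_j:j\neq i_0\}\cup\{\alpha_0-r\}$ are linearly independent: the $\alpha_j$ with $j\neq i_0$ are $d-1$ distinct simple roots, hence independent, and $\alpha_0=\sum_j c_j\alpha_j$ has nonzero coefficient $c_{i_0}$ on $\alpha_{i_0}$, so $\alpha_0$ does not lie in their span. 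Hence these $d$ walls meet in the single point $x_r$, so the equivalence class of $x_r$ under the relation defining the facets is $\{x_r\}$; its closure $\{x_r\}$ is thus a $0$-dimensional facet, i.e.\ $x_r$ is a vertex.

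I expect this last observation to be the only real obstacle. It is worth stressing that $x_r$ is in general not a special vertex, so one cannot shortcut the argument by restricting attention to special vertices: already for type $G_2$ the maximizing index $i_0$ is the one with mark $c_{i_0}=3$, so $v_{i_0}$ is not special, and replacing $x_r$ by a nearby special vertex would cost an additive constant in the exponent and thereby miss the exact value $rD$. With the claim in hand, the proof is just the single display above.
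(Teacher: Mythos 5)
Your proof is correct and takes essentially the same route as the paper: both feed Corollary~\ref{cor:ballLowBound} with the vertex $r v_{i_0}$ of the polytope $rC$, the paper arriving at it implicitly via the fact that a linear functional on a bounded convex polytope attains its maximum at a vertex of the polytope. Your explicit check that $rv_{i_0}\in\A_0$ (lying on $d$ walls with linearly independent underlying linear functionals) is just a careful spelling-out of the paper's remark that the vertices of $rC$, being cut out by walls of $\A$, are vertices of $\mathcal{B}$.
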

\begin{proof}
    Let us write
    \[
    D(r)\coloneqq \max\{2\rho(x)\mid x\in B(o,r)\cap C^+\},
    \]
    then it follows from Corollary \ref{cor:ballLowBound} that
    \[
    |B(o,r)|\geq q^{D(r)-|\Phi^+|}.
    \]
    Thus all that is left to show is that $D(r)=rD$. First we use Lemma \ref{lem:polytope} to see that 
    \[
    B(o,r)\cap C^+=\left(\left(-\alpha_0+r\right)\cap\bigcap_{i=1}^d \alpha_i\right)_0.
    \]
    Now we note that $r\cdot C=\left(-\alpha_0+r\right)\cap\bigcap_{i=1}^d \alpha_i$ is a bounded convex polytope. It is a basic fact that the maximum of a linear functional over a bounded convex polytope is attained at one of its vertices. Thus the maximum of $2\rho$ over $r\cdot C$ is attained at one of the vertices of this polytope. However, this polytope is defined in terms of walls in $\A$ and hence the vertices of this polytope are in fact vertices of $\mathcal{B}$ and thus the maximum of $2\rho$ over $r\cdot C$ is attained at some point in $(r\cdot C)_0$. It follows that
    \[
    D(r) = \max\{2\rho(y)\mid y\in r\cdot C\}
    \]
    from which it is clear that
    \[
    D(r) = r\max\{2\rho(y)\mid y\in C\}.
    \]
    Again using that this maximum must be attained at one of the vertices of $C$ we find
    \[
    D(r) = r\max_{0\leq i\leq d} 2\rho(v_i) = r\max_{0\leq i\leq d} 2\rho\left(\frac{1}{c_i}\omega_i\right)=r\max_{0\leq i\leq d} \frac{c_i^\prime}{c_i}=rD,
    \]
    which completes the proof.
\end{proof}
\begin{corollary}\label{cor:ballCard}
    We have
    \[
    \log_q(|B(o,\bullet)|)\geq D.
    \]
\end{corollary}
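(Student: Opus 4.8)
The plan is to deduce this immediately from the corollary stated just above it, which asserts that for every $r\in\N$ we have $|B(o,r)|\geq q^{rD-|\Phi^+|}$, where $D=\max_i c_i'/c_i$. First I would take $\log_q$ of both sides of this inequality to obtain $\log_q(|B(o,r)|)\geq rD-|\Phi^+|$. Dividing through by $r$ then gives
\[
\frac{\log_q(|B(o,r)|)}{r}\geq D-\frac{|\Phi^+|}{r}.
\]

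Next, since $|\Phi^+|$ is a fixed constant independent of $r$, the error term $|\Phi^+|/r$ tends to $0$ as $r\to\infty$. Hence, passing to the limit (the $\liminf$ already suffices, and a fortiori the $\limsup$ that appears in the definition of $\log_q(\cdot)$), we conclude
\[
\log_q(|B(o,\bullet)|)=\limsup_{r\to\infty}\frac{\log_q(|B(o,r)|)}{r}\geq D,
\]
which is exactly the claim.

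I do not expect any real obstacle here: the substance of the argument lies entirely in the preceding corollary (and ultimately in Corollary \ref{cor:ballLowBound} together with the polytope analysis showing $D(r)=rD$), while this statement merely repackages that polynomially-controlled lower bound into the exponential-growth-rate notation $\log_q(\cdot)$ introduced in Section \ref{sec:cdim}. The only minor point worth being careful about is that $\log_q(\cdot)$ is defined via a $\limsup$, so I should note explicitly that a uniform-in-$r$ lower bound with a vanishing correction term suffices to bound the $\limsup$ from below.
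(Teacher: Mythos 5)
Your argument is correct and is exactly what the paper means by ``This is clear from the previous corollary'': you take $\log_q$, divide by $r$, and observe that the constant error term $|\Phi^+|/r$ vanishes in the $\limsup$. Nothing further is needed.
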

\begin{proof}
    This is clear from the previous corollary.
\end{proof}
\begin{remark}
    In fact, it is true that $\log_q(|B(o,\bullet)|) =D$, but we shall not need this result.
\end{remark}
\begin{theorem}\label{thm:cardinality}
    Depending on the type of the root system $\Phi$, we have the following lower bounds for the asymptotic behaviour of the cardinality of balls in the Bruhat-Tits building of $G$:
    \[
        \begin{tabular}{c|c|c|c|c|c|c|}
           \cline{2-7}
            & $A_{2n}$ & $A_{2n+1}$ & $B_3$ & $B_{d\geq 4}$ & $C_d$ & $D_{d\geq 4}$\\ \hline
            \multicolumn{1}{|c|}{$\log_q(|B(o,\bullet)|)\geq$} & $n(n+1)$ & $(n+1)^2$ & $5$ & $\frac{d^2}{2}$ & $\frac{d(d+1)}{2}$ & $\frac{d(d-1)}{2}$ \\\hline
        \end{tabular}
    \]
    \[
        \begin{tabular}{c|c|c|c|c|c|}
           \cline{2-6}
            & $E_6$ & $E_7$ & $E_8$ & $F_4$ & $G_2$\\ \hline
            \multicolumn{1}{|c|}{$\log_q(|B(o,\bullet)|)\geq$} & $16$ & $27$ & $46$ & $11$ & $\frac{10}{3}$ \\\hline
        \end{tabular},
    \]
    where in type $A$ we have $d=2n$ or $d=2n+1$ for some $n\in\N$ depending on the parity of $d$.
\end{theorem}
\begin{proof}
By Corollary \ref{cor:ballCard} it suffices to compute $D=\max_i\frac{c_i^\prime}{c_i}$, depending on the type of $\Phi$. In \cite{bourbaki} we can find $c_i$ and $c_i^\prime$ for all types.

    \vspace{\baselineskip}
    
    {\bf Type $\mathbf{A_d}$.} For all $i$ we have 
    \[
    c_i=1\text{ and }c_i^\prime = d(d-i+1).
    \]
    It follows that $D=\frac{n}{2}(\frac{n}{2}+1)$ for $d$ even and $D=(\frac{n+1}{2})^2$ if $d$ is odd.
    
    \vspace{\baselineskip}
    
    {\bf Type $\mathbf{B_d}$.} For a given $i$ we have 
    \[
    c_i=\begin{cases}1 &\text{if }i=1,\\ 2 &\text{otherwise}\end{cases}\text{ and }c_i^\prime = i(2d-i).
    \]
    It follows that $D=\max\{2n-1,\frac{n^2}{2}\}$ and thus $D=5$ if $n=3$ and $D=\frac{n^2}{2}$ if $n>3$.
    
    \vspace{\baselineskip}
    
    {\bf Type $\mathbf{C_d}$.} We have 
    \[
    c_i=\begin{cases}1 &\text{if }i=d,\\ 2 &\text{otherwise}\end{cases}\text{ and }c_i^\prime = \begin{cases}\frac{d(d+1)}{2} &\text{if }i=d,\\ i(2d-i+1) &\text{otherwise}\end{cases}.
    \]
    It follows that $D=\frac{d(d+1)}{2}$.
    
    \vspace{\baselineskip}
    
    {\bf Type $\mathbf{D_d}$.} We have 
    \[
    c_i=\begin{cases}1 &\text{if }i=1,d-1,d,\\ 2 &\text{otherwise}\end{cases}\text{ and }c_i^\prime = \begin{cases}2(d-1) &\text{if }i=1,\\ \frac{d(d-1)}{2} &\text{if }i=d-1,d \\ 2\left(id-\frac{i(i+1)}{2}\right)& \text{otherwise}\end{cases}.
    \]
    It follows that $D=\max\left\{2(d-1),\frac{d(d-1)}{2},d^2-\frac{d(d+1)}{2}\right\}=\frac{d(d-1)}{2}$, when $d\geq 4$.
    
    \vspace{\baselineskip}
    
    {\bf Type $\mathbf{E_6}$.} We have 
    \[
    (c_1,\dots,c_6)=(1,2,2,3,2,1)\text{ and }(c_1^\prime,\dots,c_6^\prime)=(16,22,30,42,30,16)
    \]
    and thus $D=16$.

    \vspace{\baselineskip}
    
    {\bf Type $\mathbf{E_7}$.} We have 
    \[
    (c_1,\dots,c_7)=(2,2,3,4,3,2,1)\text{ and }(c_1^\prime,\dots,c_7^\prime)=(34,49,66,96,75,52,27)
    \]
    and thus $D=27$.

    \vspace{\baselineskip}
    
    {\bf Type $\mathbf{E_8}$.} We have 
    \begin{align*}
    &(c_1,\dots,c_8)=(2,3,4,6,5,4,3,2)\text{ and }\\&(c_1^\prime,\dots,c_8^\prime)=(92,136,182,270,220,168,114,58)
    \end{align*}
    and thus $D=46$.

    \vspace{\baselineskip}
    
    {\bf Type $\mathbf{F_4}$.} We have 
    \[
    (c_1,c_2,c_3,c_4)=(2,3,4,2)\text{ and }(c_1^\prime,c_2^\prime,c_3^\prime,c_4^\prime)=(16,30,42,22)
    \]
    and thus $D=11$.

    \vspace{\baselineskip}
    
    {\bf Type $\mathbf{G_2}$.} We have 
    \[
    (c_1,c_2)=(3,2)\text{ and }(c_1^\prime,c_2^\prime)=(10,6)
    \]
    and thus $D=\frac{10}{3}$.
\end{proof}
\begin{remark}
    These lower bounds are in fact equalities.
\end{remark}
\begin{remark}
    For classical types these asymptotics were already computed in \cite{gao}. Note that in \cite{gao} they use the simplicial distance function $d^\prime$ instead of $d$. However these different distance functions give the same asymptotics.
\end{remark}

\section{Definition and basic properties of the canonical dimension}\label{sec:cdim}
In this section we will define the canonical dimension of an admissible representation and we show how it behaves with respect to parabolic induction.

As mentioned in the introduction, the canonical dimension quantifies some exponential growth. To quantify the growth of a function that grows exponentially, we introduce the following notation.
\begin{definition}
    Let $f:\N \to \R_{\geq 0}$, then we define
    \[
    \log_q(f)\coloneqq \limsup_{n\to\infty} \frac{\log_q(f(n))}{n}.
    \]
    Here we define $\log_q(0)=-\infty$.
\end{definition}

\begin{definition}
Let $(\pi,V)$ be an admissible representation of $G$ and let $x\in\mathcal{B}$, then the \emph{canonical dimension} $\cdim(\pi)$ of $\pi$ is defined by
\[
\cdim(\pi)\coloneqq \log_q(\dim(V^{P_{x,\bullet}})).
\]
\end{definition}
\begin{remark}
Note that $\cdim(\pi)\geq 0$ because $\dim(V^{P_{x,n}})>0$ for $n$ sufficiently large. 
\end{remark}
\begin{remark}
A priori, the canonical dimension of an admissible representation could be infinite, however we will see later that this does not happen when the representation has complex coefficients and is finitely generated.
\end{remark}
\begin{remark}
The canonical dimension turns out to be independent of the choice of $x\in\mathcal{B}$, hence why it is suppressed in the notation. In fact, any choice of a chain $K_0\supseteq K_1\supseteq\dots$ of open compact subgroups of $G$ such that their inverse images under the exponential map form a decreasing chain of lattices $L_0\supseteq L_1\supseteq\dots$ with $L_{i+1}=\varpi L_i$ for all $i$, will yield the same canonical dimension. However we find it convenient to restrict to Moy-Prasad filtration subgroups.
\end{remark}
\begin{remark} 
The choice of using the logarithm with base $q$ is a convenient normalization. The use of this normalization is justified by Corollary \ref{cor:canDim}.
\end{remark}
\begin{remark}
    The canonical dimension is closely related to the Gelfand-Kirillov dimension for modules over an algebra, however the Gelfand-Kirillov dimension measures polynomial growth, while the canonical dimension measures exponential growth. For this reason we feel it is warranted to distinguish our canonical dimension from the well-known notion of Gelfand-Kirillov dimension. This convention is in line with \cite{james}, where the canonical dimension is studied for representations over a field of positive characteristic. However, elsewhere in the literature our notion of canonical dimension is also simply referred to as the Gelfand-Kirillov dimension.
\end{remark}
\begin{proposition}
    Let $(\pi,V)$ be an admissible representation of $G$, then the canonical dimension $\cdim(\pi)$ of $\pi$ is independent of the point $x\in\mathcal{B}$ used to define it.
\end{proposition}
\begin{proof}
    Let $x,y\in\mathcal{B}$ and let us denote by $\cdim_x(\pi)$ and $\cdim_y(\pi)$ the canonical dimensions of $\pi$ defined with respect to $x$ and $y$ respectively. We need to show that $\cdim_x(\pi)=\cdim_y(\pi)$. It suffices to show this in the case where $y=o$, so we will assume $y=o$. It follows from Proposition \ref{prop:properties} that there exists $g\in G$ such that $gx\in C$. Now we note that
    \[
    gP_{x,r}g^{-1}=P_{gx,r}
    \]
    for all $r\in \R_{\geq 0}$ and we also have
    \[
    gV^{P_{x,r}}=V^{gP_{x,r}g^{-1}}
    \]
    and thus
    \[
    \dim(V^{P_{x,r}})=\dim(gV^{P_{x,r}})=\dim(V^{P_{gx,r}})
    \]
    for all $r\in \R_{\geq 0}$. It is now clear from the definition that $\cdim_x(\pi)=\cdim_{gx}(\pi)$, so we may assume that $x\in C$. 

    Let $x\in C$, then we wish to show that $\cdim_x(\pi)=\cdim_o(\pi)$. Now let $r\geq 1$, then from the definition of $C$ it is clear that 
    \[
    f_x+r-1\leq f_o+r\leq f_{x}+r+1.
    \]
    It follows that 
    \[
    P_{x,r-1}\supseteq P_{o,r}\supseteq P_{x,r+1}
    \]
    and thus
    \[
    V^{P_{x,r-1}}\subseteq V^{P_{o,r}}\subseteq V^{P_{x,r+1}}.
    \]
    It now follows from the definition of the canonical dimension that $\cdim_x(\pi)=\cdim_o(\pi)$.
\end{proof}
\begin{theorem}\label{thm:parabolic}
    Let $Q\leq G$ be a parabolic subgroup with Levi factor $M$ and unipotent radical $U$. Let $(\pi,V_\pi)$ be an admissible representation of $M$, then 
    \[
    \cdim\left(\Ind_Q^G(\pi)\right) = \cdim(\pi) + \dim(U).
    \]
\end{theorem}
\begin{proof}
    Let $x\in\mathcal{B}(G)$ be as in Lemma \ref{lem:parabolic} with associated $\proj(x)\in \mathcal{B}(M)$. We may assume that $T\leq M$ and thus we can choose $x\in\A$. We may furthermore assume that $x$ is a special vertex and that $U^+\leq Q$. These assumptions imply that $Q$ is a standard parabolic given by a subset $\Pi_M\subseteq \Pi$. Then if we let $\Phi_M\subseteq \Phi$ be the set of all roots that are integral linear combinations of elements of $\Pi_M$ we have that $M$ is generated by the root subgroups $U_\alpha$ such that $\alpha\in \Phi_M$. Moreover, $U$ is generated by the root subgroups $U_\alpha$ such that $\alpha\in\Phi^+\setminus \Phi_M$.
    
    Now we fix $r>0$ and we write $V$ for the representation space of $\Ind_Q^G(\pi)$. Then we apply Corollary \ref{cor:mackey} to obtain
    \[
    V^{P_{x,r}}\cong \prod_{QgP_{x,r}\in Q\backslash G/P_{x,r}} V_\pi^{gP_{x,r}g^{-1}\cap Q}
    \]
    where we interpret $V_\pi$ as a representation of $Q$ via the quotient map $Q\twoheadrightarrow M$. Now using the Iwasawa decomposition $G=QP_x$, we see that we can find a complete set of representatives $\Lambda\subseteq P_x$ for the double cosets in $Q\backslash G/P_{x,r}$. We have that if $g\in\Lambda$, then $gP_{x,r}g^{-1}=P_{gx,r}=P_{x,r}$.

    Next we claim that the image of $P_{x,r}\cap Q$ under the quotient map $Q\twoheadrightarrow M$ is $P_{x,r}\cap M$. To show this, it suffices to show that $P_{x,r}\cap Q=(P_{x,r}\cap M)(P_{x,r}\cap U)$ which follows from Proposition \ref{prop:fprod} and the fact that $r>0$.

    Now we have that $P_{x,r}\cap M=P_{\proj(x),r}$ and thus
    \[
    V^{P_{x,r}}\cong \prod_{QgP_{x,r}\in Q\backslash G/P_{x,r}} V_\pi^{P_{\proj(x),r}}.
    \]
    It follows that
    \[
    \dim\left(V^{P_{x,r}}\right)=\dim\left(V_\pi^{P_{\proj(x),r}}\right)\cdot |Q\backslash G/P_{x,r}|.
    \]
    Because of the Iwasawa decomposition it is clear that $|Q\backslash G/P_{x,r}|=|(Q\cap P_x)\backslash P_x/P_{x,r}|$. Using that $P_{x,r}$ is normal in $P_x$ we deduce 
    \[
    |(Q\cap P_x)\backslash P_x/P_{x,r}|=|((Q\cap P_x)P_{x,r})\backslash P_x|.
    \]
    Next we have the following inequalities
    \[
    |((Q\cap P_{x,1})P_{x,r})\backslash P_{x,1}|\leq|((Q\cap P_x)P_{x,r})\backslash P_x|\leq |((Q\cap P_{x,1})P_{x,r})\backslash P_{x,1}|\cdot |P_{x,1}\backslash P_x|.
    \]
    This first inequality is obvious and the second follows from the fact that we have a surjective map
    \[
    ((Q\cap P_{x,1})P_{x,r})\backslash P_{x,1}\times P_{x,1}\backslash P_x\twoheadrightarrow ((Q\cap P_x)P_{x,r})\backslash P_x
    \]
    given by
    \[
    (((Q\cap P_{x,1})P_{x,r})g,P_{x,1}h)\mapsto ((Q\cap P_x)P_{x,r}) gh.
    \]
    These inequalities allow us to conclude that
    \[
    \log_q(|Q\backslash G/P_{x,\bullet}|) = \log_q(|((Q\cap P_{x,1})P_{x,\bullet})\backslash P_{x,1}|).
    \]
    Therefore we have
    \begin{align*}
    \cdim(\Ind_Q^G(\pi))&=\log_q\left(\dim\left(V^{P_{x,\bullet}}\right)\right)\\
    &=\log_q\left(\dim\left(V_\pi^{P_{\proj(x),\bullet}}\right)\right)+\log_q(|((Q\cap P_{x,1})P_{x,\bullet})\backslash P_{x,1}|)\\
    &=\cdim(\pi)+\log_q(|((Q\cap P_{x,1})P_{x,\bullet})\backslash P_{x,1}|).
    \end{align*}
    
    To complete the proof we note that $(Q\cap P_{x,1})P_{x,\bullet}=P_f$, where $f$ is given by
    \[
    f(\alpha) = \begin{cases}
        f_x(\alpha)+1 & \text{if }\alpha\in\Phi_M\cup \Phi^+\cup\{0\},\\
        f_x(\alpha)+r& \text{if }\alpha\in\Phi^-\setminus \Phi_M.
    \end{cases}
    \]
    By Lemma \ref{lem:crucial} it follows that
    \[
    |((Q\cap P_{x,1})P_{x,r})\backslash P_{x,1}|=q^{(r-1)|\Phi^-\setminus \Phi_M|}.
    \]
    Observing that $|\Phi^-\setminus \Phi_M|=|\Phi^+\setminus \Phi_M|=\dim(U)$ it is now clear that
    \[
    \cdim\left(\Ind_Q^G(\pi)\right) = \cdim(\pi) + \dim(U).
    \]
\end{proof}

\section{The canonical dimension of compactly induced representations}\label{sec:main}
Let $K\leq G$ be an open compact subgroup, then because $G$ is semi-simple and simply connected $K$ is contained in some maximal parahoric $K\subseteq P_x$ for some vertex $x\in\mathcal{B}_0$ (e.g. see \cite{kaletha}). We also let $\sigma$ be a representation of $K$ with open kernel. We will consider the representation $\pi=\cInd_K^G(\sigma)$ and we will write $V$ for the associated representation space.

Because the kernel of $\sigma$ is open and because the Moy-Prasad filtration of $P_x$ forms a neighbourhood basis of the identity in G, there must exist $R\in \R_{\geq 0}$ such that $P_{x,R}\leq \ker(\sigma)$.

\subsection{Reduction to the building using Mackey's theorem}
We will use the following version of Mackey's theorem.
\begin{definition}
Given two groups $H\leq E$, a representation $\phi$ of $H$ and an element $h\in E$, we write $\phi^{h}$ for the representation of $h^{-1}Hh$ with the same representation space as $\phi$ and with action given by $\phi^h(g)=\phi(hgh^{-1})$.
\end{definition}
\begin{theorem}[Mackey's theorem]
Let $E$ be a topological group and let ${H,K\leq E}$ be closed subgroups such that $K$ contains an open compact subgroup. Let $\tau_1$ be a finite-dimensional representation of $K$ and let $\tau_2$ be any representation of $H$ (neither of which need to be topological representations). Then there is a vector space isomorphism
\[
\Hom_{E}(\cInd_K^{E}(\tau_1),\Ind_H^{E}(\tau_2))\cong\prod_{HgK\in H\backslash E/K} \Hom_{K\cap g^{-1}Hg}(\tau_1,\tau_2^g),
\]
where we have fixed arbitrary representatives $g$ for the double cosets in $H\backslash E/K$ and where $\Ind$ denotes the induction functor. If moreover the image of $K$ in $H\backslash E$ is compact, then this isomorphism restricts to an isomorphism
\[
\Hom_{E}(\cInd_K^{E}(\tau_1),\cInd_H^{E}(\tau_2))\cong\bigoplus_{HgK\in H\backslash E/K} \Hom_{K\cap g^{-1}Hg}(\tau_1,\tau_2^g).
\]
\end{theorem}
\begin{proof}
See \cite{mackey}. Note that \cite{mackey} does not use smooth induction, however this is of no consequence since we additionally require $K$ to be open. It follows that the induction from $K$ is automatically smooth. We also have that homomorphisms map smooth vectors to smooth vectors, so the spaces of homomorphisms are the same independent of whether the induction from $H$ is smooth or not.
\end{proof}
\begin{corollary}\label{cor:mackey}
    Using the notation and assumptions from Mackey's Theorem, we have
    \[
    \Ind_H^E(\tau_2)^{K}\cong \prod_{HgK\in H\backslash E/ K} \tau_2^{gKg^{-1}\cap H}
    \]
    and if the image of $K$ in $H\backslash E$ is compact then we also have
    \[
    \cInd_H^E(\tau_2)^{K}\cong \bigoplus_{HgK\in H\backslash E/ K} \tau_2^{gKg^{-1}\cap H}
    \]
\end{corollary}
\begin{proof}
We have an isomorphism
\[
\Hom_{K}(\C_{\triv},\Ind_H^E(\tau_2))\xrightarrow{\sim} \Ind_H^E(\tau_2)^{K}
\]
given by $f\mapsto f(1)$, where $\C_{\triv}$ denotes the trivial representation of $K$. Because $K$ is open in $E$, we can use Frobenius reciprocity (see III.2.6.5 in \cite{renard}) to obtain
\[
\Hom_{K}(\C_{\triv},\Ind_H^E(\tau_2))\cong \Hom_E(\cInd_{K}^E(\C_{\triv}),\Ind_H^E(\tau_2)).
\]
We can now apply Mackey's theorem to conclude that
\begin{align*}
\Ind_H^E(\tau_2)^{K}&\cong \prod_{HgK\in H\backslash E/ K} \Hom_{K\cap g^{-1}Hg}(C_{\triv},\tau_2^g).
\end{align*}
Next we note that for $g\in E$ we have
\[
\Hom_{K\cap g^{-1}Hg}(C_{\triv},\tau_2^g)\cong \Hom_{gKg^{-1}\cap H}(C_{\triv},\tau_2)\cong \tau_2^{gKg^{-1}\cap H}.
\]
The first assertion is now clear and the second follows similarly.
\end{proof}
\begin{definition}
    Given $g\in G$ and $r\in \N$, we write 
    \[
    G_K(g,r)\coloneqq P_{gx,r}\cap K.
    \]
\end{definition}
\begin{lemma}\label{lem:vecIso}
Let $r\in\N$ and let $\Lambda(r)\subseteq G$ be a set of representatives for the double coset space $P_{x,r}\backslash G/K$, then we have a vector space isomorphism
\[
V^{P_{x,r}}\cong \bigoplus_{g\in\Lambda(r)}V_{\sigma}^{G_K(g^{-1},r)}.
\]
\end{lemma}
\begin{proof}
Let us note that 
\[
\Lambda^{-1}(r)\coloneqq\{g^{-1}\mid g\in\Lambda(r)\}
\]
is a set of representatives for $K\backslash G/P_{x,r}$. Now applying Corollary \ref{cor:mackey} we find 
\[
V^{P_{x,r}}\cong \bigoplus_{g\in \Lambda^{-1}(r)} V_\sigma^{gP_{x,r}g^{-1}\cap K}
\]
Next we have
\[
gP_{x,r}g^{-1}=P_{gx,r}
\]
and thus
\[
V^{P_{x,r}}\cong \bigoplus_{g\in \Lambda^{-1}(r)} V_\sigma^{G_K(g,r)}=\bigoplus_{g\in \Lambda(r)} V_\sigma^{G_K(g^{-1},r)}.
\]
\end{proof}
\begin{definition}
    Let $r\in \N$, then we write
    \[
    B(x,r;K,\sigma)\coloneqq \{g\in\Lambda(r)\mid V_\sigma^{G_K(g^{-1},r)}\neq 0\}.
    \]
\end{definition}
\begin{corollary}\label{cor:cdimEq}
    We have
    \[
    \cdim(\pi) = \log_q(|B(x,\bullet,\sigma)|).
    \]
\end{corollary}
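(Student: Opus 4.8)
The goal is to deduce the equality $\cdim(\pi)=\log_q(|B(x,\bullet,\sigma)|)$ from Lemma \ref{lem:vecIso}. By definition $\cdim(\pi)=\log_q(\dim(V^{P_{x,\bullet}}))$, so the first step is simply to take dimensions in the isomorphism furnished by Lemma \ref{lem:vecIso}:
\[
\dim(V^{P_{x,r}}) = \sum_{P_{x,r}y\in P_{x,r}\backslash\mathfrak{B}^{\lambda(x)}} \dim\left(V_\sigma^{\mathcal{G}_{x,r}(x_y)}\right).
\]
Only those summands with $V_\sigma^{\mathcal{G}_{x,r}(x_y)}\neq 0$ contribute, and these are indexed precisely by $B(x,r,\sigma)$, so the sum runs over $B(x,r,\sigma)$ with each term at most $\dim(V_\sigma)$. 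Since $\sigma$ is a representation of the finite group $\mathcal{G}_x$, the dimension $\dim(V_\sigma)$ is a finite constant, call it $m$. This immediately gives the two-sided bound
\[
|B(x,r,\sigma)| \leq \dim(V^{P_{x,r}}) \leq m\,|B(x,r,\sigma)|
\]
for every $r$ for which at least one nonzero summand exists (which holds for all sufficiently large $r$ by smoothness of $\pi$, so the $\limsup$ is unaffected by small $r$; one could also note $|B(x,r,\sigma)|\ge 1$ for $r$ large since $V^{P_{x,r}}\neq 0$).

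The second step is to observe that $\log_q$ of a multiplicative constant washes out in the limit: taking $\log_q$, dividing by $r$, and letting $r\to\infty$ in the chain of inequalities, the term $\frac{\log_q m}{r}\to 0$, so
\[
\limsup_{r\to\infty}\frac{\log_q|B(x,r,\sigma)|}{r} = \limsup_{r\to\infty}\frac{\log_q\dim(V^{P_{x,r}})}{r},
\]
which is exactly $\log_q(|B(x,\bullet,\sigma)|)=\cdim(\pi)$. This is the entire argument; the statement is essentially a bookkeeping corollary of Lemma \ref{lem:vecIso}.

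There is no real obstacle here — the only point requiring a word of care is making sure the comparison constant is genuinely uniform in $r$, i.e.\ that one is not secretly using a bound on the number of summands (which grows) rather than on the size of each summand (which is the fixed $m=\dim V_\sigma$); this is where it matters that $\sigma$ is finite-dimensional, being a representation of the finite group $\mathcal{G}_x$. One should also note that $\dim(V^{P_{x,r}})$ is monotone and eventually positive so that the $\limsup$ defining $\log_q$ is finite-sided in the relevant direction, though since we only claim an equality of the two $\limsup$s and not that either is a limit, even this is not strictly needed.
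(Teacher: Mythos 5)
Your proposal is correct and follows essentially the same route as the paper: take dimensions in Lemma~\ref{lem:vecIso}, observe the sum is supported on $B(x,r,\sigma)$ with each summand between $1$ and $\dim(V_\sigma)$, and conclude via the resulting two-sided bound $|B(x,r,\sigma)|\leq\dim(V^{P_{x,r}})\leq\dim(V_\sigma)|B(x,r,\sigma)|$ that the multiplicative constant washes out under $\log_q$.
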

\begin{proof}
Using Lemma \ref{lem:vecIso} we find that
\[
\dim(V^{P_{x,r}}) = \sum_{g\in\Lambda(r)} \dim\left(V_\sigma^{G_K(g^{-1},r)}\right) = \sum_{g\in\Lambda(r)} \dim\left(V_\sigma^{G_K(g^{-1},r)}\right).
\]
Clearly if $g\in\Lambda(r)$, then $1\leq \dim\left(V_\sigma^{G_K(g^{-1},r)}\right)\leq \dim(V_\sigma)$ and thus
\[
|B(x,r;K,\sigma)|\leq \dim(V^{P_{x,r}}) \leq \dim(V_\sigma)|B(x,r;K,\sigma)|.
\]
The result now follows easily from the definition of $\cdim(\pi)$.
\end{proof}
We have now reduced the problem of computing the canonical dimension of $\pi$ to determining the asymptotic behaviour of the cardinality of the set $B(x,r;K,\sigma)$.

\subsection{Lower bounds}
Consider the map 
\[
\iota:\Lambda(r)\to P_{x,r}\backslash \mathcal{B}_0^{\lambda(x)}
\]
given by
\[
g\mapsto P_{x,r}gx.
\]
Because $K\subseteq P_x$ and $G$ acts transitively on $\mathcal{B}_0^{\lambda(x)}$ with stabilizer $P_x$, the map $\iota$ is surjective and the corresponding map 
\[
P_{x,r}\backslash G/K\to P_{x,r}\backslash \mathcal{B}_0^{\lambda(x)}
\]
is independent of the choice of representatives $\Lambda(r)$.
\begin{lemma}\label{lem:estimate}
    Let $r\geq R+2$ be an integer, then we have
    \[
    P_{x,r}\backslash B(x,r-R)^{\lambda(x)}\subseteq \iota(B(x,r;K,\sigma)).
    \]
\end{lemma}
\begin{proof}
    Let $y\in B(x,r-R)^{\lambda(x)}$, then, because $\iota$ is surjective, we can find $g\in\Lambda(r)$ such that $y\in \iota(g)=P_{x,r} gx$. It now suffices to show that $g\in B(x,r;K,\sigma)$. Let $h\in P_{x,r}$ be such that $hgx=y$, then using the fact that $G$ acts isometrically we find 
    \[
    d(g^{-1}x,x)=d(x,gx)=d(hx,hgx)=d(x,y)\leq r-R.
    \]
    It now follows from Lemma \ref{lem:contain} that $P_{g^{-1}x,r}\subseteq P_{x,R}\subseteq \ker(\sigma)$. It is also clear that $G_K(g^{-1},r)=P_{g^{-1}x,r}$ and thus 
    \[
    V_\sigma^{G_K(g^{-1},r)}=V_\sigma\neq 0.
    \]
    By definition we now have $g\in B(x,r;K,\sigma)$, which completes the proof.
\end{proof}

\begin{lemma}\label{lem:lower}
    Let $r\in\N$, then we have
    \[
    |B(x,r;K,\sigma)|\geq \frac{1}{d+1}|B(o,r-R-2)|.
    \]
\end{lemma}
\begin{proof}
We have the following estimates
\begin{align*}
|B(x,r;K,\sigma)|&\geq |\iota(B(x,r;K,\sigma))|\\
&\geq |P_{x,r}\backslash B(x,r-R)^\lambda(x)|\\
&=|B(x,r-R)^{\lambda(x)}|\\
&\geq \frac{1}{d+1}|B(x,r-R-1)|.
\end{align*}
The first inequality is obvious, the second inequality follows by Lemma \ref{lem:estimate}, the equality follows by Lemma \ref{lem:contain} and the final inequality follows by Corollary \ref{cor:bound}. Now because the action of $G$ on $\mathcal{B}_0$ preserves distances and because every vertex is conjugate to either $o\in \A$ or to a vertex in $\A$ adjacent to $o$, we may assume that either $x=o$ or $x$ is adjacent to $o$. In both cases we find $B(o,r-R-2)\subseteq B(x,r-R-1)$ and thus we conclude
\[
|B(x,r;K,\sigma)|\geq \frac{1}{d+1}|B(o,r-R-2)|.
\]
\end{proof}
\begin{corollary}\label{cor:lower}
    We have
    \[
    \cdim(\pi)\geq \log_q(|B(o,\bullet)|).
    \]
\end{corollary}
\begin{proof}
    This follows upon combining Lemma \ref{lem:lower} with Corollary \ref{cor:cdimEq}.
\end{proof}
\begin{theorem}[Main theorem]\label{thm:fundamental}
    We have the following lower bounds for the canonical dimension of $\pi$, depending on the type of the root system $\Phi$:
    \[
        \begin{tabular}{c|c|c|c|c|c|c|}
           \cline{2-7}
            & $A_{2n}$ & $A_{2n+1}$ & $B_3$ & $B_{d\geq 4}$ & $C_d$ & $D_{d\geq 4}$\\ \hline
            \multicolumn{1}{|c|}{$\cdim(\pi)\geq$} & $n(n+1)$ & $(n+1)^2$ & $5$ & $\frac{d^2}{2}$ & $\frac{d(d+1)}{2}$ & $\frac{d(d-1)}{2}$ \\\hline
        \end{tabular}
    \]
    \[
        \begin{tabular}{c|c|c|c|c|c|}
           \cline{2-6}
            & $E_6$ & $E_7$ & $E_8$ & $F_4$ & $G_2$\\ \hline
            \multicolumn{1}{|c|}{$\cdim(\pi)\geq$} & $16$ & $27$ & $46$ & $11$ & $4$ \\\hline
        \end{tabular},
    \]
    where in type $A$ we have $d=2n$ or $d=2n+1$ for some $n\in\N$ depending on the parity of $d$.
\end{theorem}
\begin{proof}
    This follows from Corollary \ref{cor:lower} and Theorem \ref{thm:cardinality}. For the $G_2$ entry, we have also used the fact that the canonical dimension is always an integer.
\end{proof}
\begin{remark}
    We have allowed for a general coefficient field $\mathcal{F}$, however it should be said that the content of this theorem is not very interesting in the case where $\mathcal{F}$ has characteristic $p$. This is because in characteristic $p$ any smooth representation of a pro-$p$ group has nontrivial fixed vectors. Combining this with Lemma \ref{lem:vecIso} and the fact that $\Lambda(r)$ is infinite we see that $\cdim(\pi)=\infty$ regardless of $K$ or $\sigma$. In other words, in characteristic $p$ every representation compactly induced from an open compact subgroup is non-admissible. 
\end{remark}
An immediate corollary of our main theorem is the following important result.
\begin{theorem}\label{thm:corollary}
    Assume one of the following:
    \begin{enumerate}[(i)]
        \item $\mathcal{F}=\C$ and $p$ does not divide the order of the Weyl group associated with the root system $\Phi$,
        \item $p$ is odd, $\mathcal{F}$ has characteristic unequal to $p$ and $G$ is of type $B,C$ or $D$.
    \end{enumerate}
    Let $\pi$ be an irreducible supercuspidal representation of $G$. Then the lower bounds of Theorem \ref{thm:fundamental} apply to $\pi$.
\end{theorem}
\begin{proof}
    It suffices to show that $\pi$ is (isomorphic to a representation) compactly induced from an open compact subgroup and that the representation $\sigma$ being induced has open kernel, because then Theorem \ref{thm:fundamental} applies. If $\sigma$ is smooth and finite-dimensional then it has open kernel. Under assumption $(i)$ it follows from \cite{exhaustion} that $\pi$ is obtained from a process called Yu's construction. Examining Yu's construction as detailed for example in \cite{construction} or in the original paper \cite{yu} shows that $\pi$ is of the desired form. Under assumption $(ii)$ it follows from \cite{stevens} that $\pi$ is of the desired form.
\end{proof}
\begin{remark}
    It is expected that compact induction from an open compact subgroup plays an important role in the construction of supercuspidal representations even without the hypotheses of this theorem. Therefore it is likely that Theorem \ref{thm:fundamental} will have implications for supercuspidal representations even in a more general setting.
\end{remark}

\section{The local character expansion}\label{sec:local}
For the rest of this paper we will specialize to the case where $\mathcal{F}=\C$. Ìn this case there are more tools at our disposal, in particular the Harish-Chandra-Howe local character expansion. We will use these tools to establish some facts about the canonical dimension. We will also use them to define the wavefront set and explain the relationship between the canonical dimension and the wavefront set. In this section we recall the local character expansion.

Let us write $C^\infty_c(G)$ for the space  of complex valued, compactly supported, locally constant functions on $G$, where $G$ is equipped with the $p$-adic topology. This space can be equipped with a convolution product, turning it into an algebra known as the \emph{Hecke algebra} of $G$. Its linear dual $C_c^\infty(G)^\prime$ is the space of \emph{distributions} on $G$. Given an admissible representation $(\pi,V)$ of $G$, we can define an action of the Hecke algebra $\pi:C^\infty_c(G)\to \End(V)$, where $C^\infty_c(G)$ acts by finite rank operators. We can then define a \emph{distribution character} $\Theta_\pi\in C^\infty_c(G)^\prime$ by
\[
\Theta_\pi(f) = \tr(\pi(f))
\]
for all $f\in C^\infty_c(G)$.

Similarly, on the Lie algebra $\mathfrak{g}$ we consider $C_c^\infty(\mathfrak{g})$ and $C_c^\infty(\mathfrak{g})^\prime$. We write $O(0)$ for the set of nilpotent orbits in $\mathfrak{g}$. Given $O\in O(0)$, Deligne and Rao \cite{rao} showed that there exists a well-defined $G$-invariant distribution $\mu_O\in C^\infty_c(\mathfrak{g})^\prime$ which essentially integrates along $O$. This distribution is called a \emph{ nilpotent orbital integral} and it is unique up to multiplication by a positive scalar. We fix choices for these $\mu_O$.

One important result about nilpotent orbital integrals is that they are \emph{homogeneous} in some sense. This is not unexpected once we realize that nilpotent orbits $O\in O(0)$ are homogeneous in the sense that $tO=O$ for all $t\in F^\times$.
\begin{definition}
    Let $t\in F^\times$ and let $f\in C_c^\infty(\mathfrak{g})$, we define a function $f_t$ by
    \[
    f_t(X)=f(t^{-1}X).
    \]
    Now let $T\in C_c^\infty(\mathfrak{g})^\prime$ be a distribution, then we define a distribution $T_t$ by
    \[
    T_t(f) = T(f_t),
    \]
    for all $f\in C_c^\infty(\mathfrak{g})$.
\end{definition}
\begin{proposition}\label{prop:hom}
Given $t\in F^\times$ and $O\in O(0)$, we have
\[
(\mu_O)_{t^2} = |t|^{\dim(O)}\mu_O,
\]
where $\dim(O)$ is the dimension of $O$.
\end{proposition}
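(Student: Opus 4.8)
The plan is to reduce the homogeneity statement for $\mu_O$ to the behaviour of nilpotent orbital integrals under scaling of the Lie algebra, using the characterization of $\mu_O$ by its defining property (integration along $O$ against a chosen invariant measure) together with the fact that the orbit $O$ is stable under dilation by $F^\times$. Concretely, for a nilpotent element $N \in \mathfrak{g}$ and $t \in F^\times$, multiplication by $t$ maps $O$ to $O$; the point is to track how the invariant measure on $O$ transforms. Since $O \cong G/\mathrm{Stab}_G(N)$ carries a $G$-invariant measure unique up to positive scalar, and the dilation $X \mapsto tX$ is a diffeomorphism of $O$ commuting with the $G$-action, it must rescale the invariant measure by some positive constant $c(t)$; the task is to identify $c(t)$.

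First I would make precise the normalization: $\mu_O(f) = \int_O f(X)\, d\mu_O(X)$ for $f \in C_c^\infty(\mathfrak{g})$, with $d\mu_O$ a fixed choice of $G$-invariant measure on $O$. Then for the dilation map $m_t : X \mapsto tX$ on $\mathfrak{g}$, I would compute
\[
(\mu_O)_{t}(f) = \mu_O(f_t) = \int_O f(t^{-1}X)\, d\mu_O(X) = \int_O f(Y)\, d(m_{t})_* \mu_O(Y),
\]
where in the last step I substitute $Y = t^{-1}X$, i.e. $X = tY$, so the pushforward $(m_t)_*\mu_O$ appears. Because $(m_t)_*\mu_O$ is again a $G$-invariant measure on $O$ (the dilation commutes with the adjoint action, which is linear), uniqueness forces $(m_t)_*\mu_O = c(t)\, \mu_O$ for a positive scalar $c(t)$, and $c$ is a homomorphism $F^\times \to \R_{>0}$. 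So $(\mu_O)_t = c(t)\mu_O$, and it remains to show $c(t^2) = |t|^{\dim O}$, equivalently $c(t) = |t|^{\dim O / 2}$ — note $\dim O$ is even since nilpotent orbits are even-dimensional, so this makes sense, or one can phrase everything in terms of squares as in the statement to sidestep that remark.

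To pin down $c(t)$, I would use a concrete model: pick an $\mathfrak{sl}_2$-triple $(e, h, f)$ with $e \in O$, so that $\mathrm{ad}(h)$ grades $\mathfrak{g}$ and $e$ has weight $2$. A standard fact (Deligne–Rao, or the parametrization of the orbit via a slice) is that the invariant measure on $O$ near $e$ can be built from a Haar measure on the transversal, and dilation by $t$ acts on this local model with a Jacobian that is a power of $|t|$ determined by the sum of the positive $\mathrm{ad}(h)$-weights, which equals $\dim O / 2$ — but more robustly, one observes that $c$ is a continuous homomorphism $F^\times \to \R_{>0}$, hence of the form $t \mapsto |t|^s$ for some real $s$ (using that $F^\times \cong \varpi^{\Z} \times \mathcal{O}^\times$ and $\mathcal{O}^\times$ is compact so maps trivially), and then computes $s$ by a single test: restrict $\mu_O$ to scaling and compare with the known homogeneity degree. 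The cleanest route is to recall that $\mu_O$, viewed via Fourier transform, satisfies Howe's homogeneity, or simply to cite the slice computation: the exponent is $\dim O$ when scaling by $t^2$ because the weights of $\mathrm{ad}(h)$ on the tangent space $\mathfrak{g}/\mathfrak{g}^e$ pair up and the relevant Jacobian under $X \mapsto t^2 X$ on the $G$-orbit through $e$ (with $e$ of $\mathrm{ad}(h)$-weight $2$, so $t^2$-scaling corresponds to $\mathrm{ad}(h)$-flow) has absolute value $|t|^{\dim O}$.

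The main obstacle I anticipate is the last identification of the constant: showing $c$ is a homomorphism and continuous is easy, but computing the exact exponent requires either invoking a known normalization result about Rao's orbital integrals or doing an honest Jacobian calculation in a transverse slice to $O$, keeping careful track of how the $G$-invariant measure decomposes and how dilation interacts with the slice. I would handle this by using the $\mathfrak{sl}_2$-theory: the torus $\exp(s\,\mathrm{ad}(h))$ (or its $p$-adic analogue via the cocharacter associated to $h$) moves $e$ by scaling, and conjugating by $\mathrm{Ad}$ of the corresponding torus element realizes the dilation $X \mapsto t^2 X$ on the orbit as a $G$-action composed with nothing — wait, more carefully, it realizes it up to an element of $G$, so $G$-invariance of the measure already gives the answer once one checks that the cocharacter $\lambda$ with $\langle \alpha, \lambda \rangle$ matching the $h$-grading satisfies $\mathrm{Ad}(\lambda(t)) e = t^2 e$; then $(\mu_O)_{t^2}$ differs from $\mu_O$ only through the modular behaviour of the measure under this non-$G$ part, which is exactly $|t|^{\dim O}$ from the determinant of $\mathrm{Ad}(\lambda(t))$ on $\mathfrak{g}/\mathfrak{g}^e \cong T_eO$. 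I would present this as the key computation and cite \cite{rao} for the existence and basic properties underlying it.
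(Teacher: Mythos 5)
The paper does not prove this proposition: it simply cites Lemma~3.2 of Harish-Chandra's \emph{Admissible invariant distributions on reductive p-adic groups} (reference \cite{harish}). You supply an actual argument where the paper gives a pointer, and your strategy --- uniqueness of the $G$-invariant measure on $O$ up to a positive scalar, hence $(m_{t^2})_*\mu_O = c(t)\mu_O$ for a continuous homomorphism $c:F^\times\to\R_{>0}$, hence $c(t)=|t|^s$, and then pinning down $s$ via an $\mathfrak{sl}_2$-triple $(e,h,f)$ and the cocharacter $\lambda$ satisfying $\mathrm{Ad}(\lambda(t))e=t^2e$ --- is essentially the standard proof, and it does give the right answer.

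Two places where you should tighten the write-up. First, your opening claim that ``multiplication by $t$ maps $O$ to $O$'' for every $t\in F^\times$ is false over a $p$-adic field: the $G(F)$-orbit through $e$ is in general stable only under $(F^\times)^2$ (precisely because $t^2e=\mathrm{Ad}(\lambda(t))e$), and $t\cdot O$ can be a different rational orbit inside the same geometric orbit. This is presumably why the statement is phrased in terms of $t^2$; you acknowledge this in passing, but the argument should be set up for $m_{t^2}$ from the start. Second, be careful about which realization of the tangent space you use when computing the Jacobian. As a subspace of $\mathfrak{g}$, $T_eO=[\mathfrak{g},e]$, and the natural identification $\mathfrak{g}/\mathfrak{g}^e\xrightarrow{\sim}[\mathfrak{g},e]$, $X\mapsto[X,e]$, shifts $\mathrm{ad}(h)$-weights by $+2$. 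Decomposing $\mathfrak{g}$ into irreducible $\mathfrak{sl}_2$-modules of highest weights $d_1,\dots,d_k$, one finds that the sum of $\mathrm{ad}(h)$-weights on $\mathfrak{g}/\mathfrak{g}^e$ is $-\sum d_j=-\dim O$, while the sum of weights on $[\mathfrak{g},e]$ is $+\dim O$; so $\det\bigl(\mathrm{Ad}(\lambda(t))|_{[\mathfrak{g},e]}\bigr)=t^{\dim O}$. Combining the $G$-invariance of $\mu_O$ under $\mathrm{Ad}(\lambda(t))$ (which relates the density of $\mu_O$ at $e$ and at $t^{2}e$ via that determinant) with the linear Jacobian $|t|^{2\dim O}$ of $m_{t^{2}}$ on the tangent space then yields exactly $(\mu_O)_{t^2}=|t|^{\dim O}\mu_O$. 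Your sketch gestures at this but conflates the two models of the tangent space, which flips the sign of the exponent in the intermediate determinant; the final exponent still comes out correctly, but the step deserves to be made explicit.
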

\begin{proof}
    This is Lemma 3.2 in \cite{harish}.
\end{proof}

\subsection{Lattices and the Fourier transform}
An open compact $\mathcal{O}$-submodule of $\mathfrak{g}$ is called a \emph{lattice}. Similarly to the filtrations of the parahoric subgroups, Moy and Prasad (\cite{moy-prasad}) defined filtrations $\mathfrak{g}_{x,r}$ of lattices in $\mathfrak{g}$. Here $x\in\mathcal{B}$ and $r\in \R$. These lattices have the property that for $r$ large enough we have $\exp(\mathfrak{g}_{x,r})=P_{x,r}$. Here $\exp$ is the exponential map into $G$, which is defined on a neighbourhood of the origin in $\mathfrak{g}$. Also, for any $r\in \R$ and $n\in \N$ we have $\mathfrak{g}_{x,r+n}=\varpi^n\mathfrak{g}_{x,r}$. It follows that $\{\mathfrak{g}_{x,n}\}_{n\in\N}$ is neighbourhood basis of the origin. Moreover, the following lemma shows that we can normalize the Haar measure on $G$ in a way that is compatible with the Haar measure on $\mathfrak{g}$.
\begin{lemma}\label{lem:measure}
    Given $x\in\mathcal{B}$ and a Haar measure $dX$ on $\mathfrak{g}$, we can normalize the Haar measure $dg$ on $G$ such that 
    \[
    dX(\mathfrak{g}_{x,r}) = dg(P_{x,r})
    \]
    for all $r\geq 1$.
\end{lemma}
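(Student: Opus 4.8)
The plan is to pin down the normalization of $dg$ by imposing the required identity at $r=1$, and then to reduce the statement for all $r\geq 1$ to a single combinatorial fact: that the transition indices of the two Moy--Prasad filtrations agree.

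First, since $\mathfrak{g}_{x,1}$ is a lattice and $P_{x,1}$ is a compact open subgroup of $G$, both have finite positive volume for any choice of Haar measures on $\mathfrak{g}$ and on $G$. As a Haar measure on $G$ is unique up to a positive scalar, I would normalize $dg$ so that $dg(P_{x,1})=dX(\mathfrak{g}_{x,1})$. For $r\geq 1$ we have inclusions $\mathfrak{g}_{x,r}\subseteq\mathfrak{g}_{x,1}$ of lattices and $P_{x,r}\subseteq P_{x,1}$ of compact open subgroups, both of finite index, so
\[
dX(\mathfrak{g}_{x,r})=\frac{dX(\mathfrak{g}_{x,1})}{[\mathfrak{g}_{x,1}:\mathfrak{g}_{x,r}]},\qquad dg(P_{x,r})=\frac{dg(P_{x,1})}{[P_{x,1}:P_{x,r}]}.
\]
In view of the normalization, the lemma is therefore equivalent to the equality of indices
\[
[\mathfrak{g}_{x,1}:\mathfrak{g}_{x,r}]=[P_{x,1}:P_{x,r}]\qquad\text{for every }r\geq 1.
\]

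To establish this I would use the exponential map. For $r\geq 1$ it restricts to a bijection $\exp\colon\mathfrak{g}_{x,1}\to P_{x,1}$ carrying $\mathfrak{g}_{x,r}$ onto $P_{x,r}$. The key claim is that it descends to a bijection of the finite sets $\mathfrak{g}_{x,1}/\mathfrak{g}_{x,r}\to P_{x,1}/P_{x,r}$, whence the index equality follows by counting. Since $\mathfrak{g}_{x,r}$ is an additive subgroup of $\mathfrak{g}_{x,1}$ and $P_{x,r}$ a normal subgroup of $P_{x,1}$, the claim amounts to the identity $\exp(X+\mathfrak{g}_{x,r})=\exp(X)P_{x,r}$ for all $X\in\mathfrak{g}_{x,1}$. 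This in turn follows from the Baker--Campbell--Hausdorff formula: for $Y\in\mathfrak{g}_{x,r}$ one has $\exp(X)\exp(Y)=\exp(Z)$ with $Z-X-Y$ a sum of iterated Lie brackets, each involving at least one factor of $X$ and at least one factor of $Y$, hence lying in $\mathfrak{g}_{x,1+r}\subseteq\mathfrak{g}_{x,r}$; thus $Z\in X+\mathfrak{g}_{x,r}$, giving $\exp(X)P_{x,r}\subseteq\exp(X+\mathfrak{g}_{x,r})$, and applying the same computation to $\exp(-X)\exp(X+Y)$ gives the reverse inclusion. Alternatively, one can avoid the exponential altogether and compute both indices directly from the product decomposition of Proposition \ref{prop:fprod} and the torus filtration, root subgroup by root subgroup, obtaining in both cases $q^{d(\lceil r\rceil-1)}\prod_{\alpha\in\Phi}q^{\lceil r-\alpha(x)\rceil-\lceil 1-\alpha(x)\rceil}$; or one can invoke the Moy--Prasad layer isomorphisms $P_{x,t}/P_{x,t+}\cong\mathfrak{g}_{x,t}/\mathfrak{g}_{x,t+}$ and telescope over the finitely many jumps $t\in[1,r)$.

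The only non-formal ingredient, and the step I expect to require the most care, is precisely this compatibility of $\exp$ (or of the product decompositions) with the two filtrations --- the Baker--Campbell--Hausdorff bookkeeping, equivalently the fact that the two filtrations have identical transition indices. Everything else reduces to the uniqueness and scaling behaviour of Haar measure together with multiplicativity of the index in a tower of compact open subgroups.
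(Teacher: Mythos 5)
Your reduction is identical to the paper's: normalize $dg$ so that $dg(P_{x,1})=dX(\mathfrak{g}_{x,1})$, then use the scaling of Haar measure under finite-index inclusions to reduce the claim to the index equality $[\mathfrak{g}_{x,1}:\mathfrak{g}_{x,r}]=[P_{x,1}:P_{x,r}]$. For that cardinality fact the paper simply cites Moy--Prasad for a bijection $P_{x,s}/P_{x,r}\cong\mathfrak{g}_{x,s}/\mathfrak{g}_{x,r}$ with $r>s>0$, which is exactly your third listed alternative (telescoping the layer isomorphisms $P_{x,t}/P_{x,t+}\cong\mathfrak{g}_{x,t}/\mathfrak{g}_{x,t+}$ over the jumps in $[1,r)$). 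Your primary route through the BCH formula is a more self-contained derivation of that same fact and is correct, but note that it rests on the same implicit hypotheses (the exponential being defined on $\mathfrak{g}_{x,1}$ with $\exp(\mathfrak{g}_{x,r})=P_{x,r}$, and the BCH series converging there) that the paper already assumes when it sets up the exponential; so it adds transparency but not generality. In short: same proof, with the one cited ingredient unpacked.
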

\begin{proof}
    It follows from \cite{moy-prasad} that if $r>s>0$, then $P_{x,s}/P_{x,r}\cong \mathfrak{g}_{x,s}/\mathfrak{g}_{x,r}$ and both are finite sets. If we normalize $dg$ such that $dX(\mathfrak{g}_{x,1}) = dg(P_{x,1})$ it follows that
    \[    |\mathfrak{g}_{x,1}/\mathfrak{g}_{x,r}|dX(\mathfrak{g}_{x,r})=dX(\mathfrak{g}_{x,1}) = dg(P_{x,1})=|P_{x,1}/P_{x,r}|dg(P_{x,r}).
    \]
    Now the isomorphism $P_{x,s}/P_{x,r}\cong \mathfrak{g}_{x,s}/\mathfrak{g}_{x,r}$ shows that the two cardinalities are equal and the result follows.    
\end{proof}

For the rest of this text, we fix a symmetric, non-degenerate, $G$-invariant bilinear form $B$ on $\mathfrak{g}$.
\begin{definition}
    Let $L\subseteq\mathfrak{g}$ be a lattice, then we define its dual $L^*$ by
\[
L^*\coloneqq\{X\in\mathfrak{g}\mid \forall Y\in L,\, B(X,Y)\in \mathfrak{p}\}.
\]
This is again a lattice.
\end{definition}

\begin{definition}
    Let $\Omega\subseteq\mathfrak{g}$ be a subset, then we write $\mathbbm{1}_\Omega$ for the characteristic function on $\Omega$.
\end{definition}
\begin{proposition}\label{prop:homlat}
    Let $L\subseteq\mathfrak{g}$ be a lattice and let $t\in F^\times$, then 
    \[
    (\mathbbm{1}_L)_t=\mathbbm{1}_{tL}\text{ and }(tL)^*=t^{-1}L^*.
    \]
\end{proposition}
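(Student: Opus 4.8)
The plan is to verify both identities directly from the definitions, since each is essentially a one-line unravelling once one keeps track of where the scalar $t$ acts. For the first identity, recall that $(\mathbbm{1}_L)_t(X) = \mathbbm{1}_L(t^{-1}X)$ by the definition of $f_t$. Now $t^{-1}X \in L$ if and only if $X \in tL$, so $\mathbbm{1}_L(t^{-1}X) = \mathbbm{1}_{tL}(X)$ for all $X \in \mathfrak{g}$, giving $(\mathbbm{1}_L)_t = \mathbbm{1}_{tL}$. (One should note in passing that $tL$ is again a lattice: scaling by $t \in F^\times$ is an $\mathcal{O}$-module automorphism of $\mathfrak{g}$ that sends open compact submodules to open compact submodules.)

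For the second identity, I would simply chase the definition of the dual lattice together with the $G$-invariance — really just the bilinearity — of $B$. By definition,
\[
(tL)^* = \{X \in \mathfrak{g} \mid \forall Y \in tL,\ B(X,Y) \in \mathfrak{p}\}.
\]
Every element of $tL$ is of the form $tY'$ with $Y' \in L$, and $B(X, tY') = t\,B(X,Y')$ by bilinearity of $B$. Since $t \in F^\times$, the condition $t\,B(X,Y') \in \mathfrak{p}$ is equivalent to $B(X,Y') \in t^{-1}\mathfrak{p}$. Thus $X \in (tL)^*$ iff $B(X,Y') \in t^{-1}\mathfrak{p}$ for all $Y' \in L$. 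On the other hand, $X \in t^{-1}L^*$ iff $tX \in L^*$ iff $B(tX, Y') \in \mathfrak{p}$ for all $Y' \in L$ iff $B(X,Y') \in t^{-1}\mathfrak{p}$ for all $Y' \in L$, using bilinearity once more. The two conditions coincide, so $(tL)^* = t^{-1}L^*$.

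There is no real obstacle here; the only point requiring the slightest care is bookkeeping the valuation: the defining condition for the dual uses membership in $\mathfrak{p}$ rather than in $\mathcal{O}$, and multiplying the pairing by $t$ translates into multiplying the target ideal $\mathfrak{p}$ by $t^{-1}$ rather than by $t$, which is why the dual transforms contravariantly. One could also deduce the second identity more abstractly by observing that the pairing $B$ identifies $\mathfrak{g}$ with its own (suitably twisted) dual and that dualizing is an order-reversing, $F$-semilinear-compatible operation on lattices, but the direct computation above is shortest and self-contained.
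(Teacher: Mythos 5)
Your proof is correct, and it is essentially the same argument the paper intends: the paper's own proof is simply ``This is clear,'' and your verification spells out precisely the routine unwinding of the definitions of $f_t$, $L^*$, and the bilinearity of $B$ that makes it clear.
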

\begin{proof}
This is clear.
\end{proof}

We now turn our attention to defining the Fourier transform. We fix a complex valued character $\Lambda$ of the additive group $F$ which is nontrivial on $\mathcal{O}$, but is trivial on $\mathfrak{p}$.
\begin{definition}
    Let $f\in C_c^\infty(\mathfrak{g})$ and define
    \[
    \widehat{f}(Y) \coloneqq \int_\mathfrak{g} f(X)\Lambda(B(X,Y))dX,
    \]
    for all $Y\in\mathfrak{g}$. Here $dX$ is a Haar measure on $\mathfrak{g}$. Then $\widehat{f}\in C_c^\infty(\mathfrak{g})$ is called the \emph{Fourier transform} of $f$. We can normalize $dX$ such that $\hat{\hat{f}}(X)=f(-X)$ for all $f\in C^\infty_c(\mathfrak{g})$ and we will choose this normalization.

    Let $T\in C^\infty_c(\mathfrak{g})^\prime$, then the \emph{Fourier transform} of $T$ is defined by
    \[
    \widehat{T}(f)\coloneqq T(\widehat{f})
    \]
    for all $f\in C_c^\infty(\mathfrak{g})$ and this defines a distribution $\widehat{T}\in C_c^\infty(\mathfrak{g})^\prime$.
\end{definition}
\begin{proposition}
    Let $O\in O(0)$, then the Fourier transform $\widehat{\mu_O}$ is represented by a locally integrable function also denoted $\widehat{\mu_{O}}$.
\end{proposition}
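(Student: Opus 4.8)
The plan is to reduce the statement to Harish-Chandra's theorem on the local integrability of Fourier transforms of invariant orbital integrals, and I will sketch the structure of the argument. First I would record the two structural properties of $\widehat{\mu_O}$ that make the machinery applicable. It is $G$-invariant: since the bilinear form $B$ is invariant under the adjoint action $\mathrm{Ad}$ of $G$, the Fourier transform on $C_c^\infty(\mathfrak{g})^\prime$ commutes with the $G$-action, and $\mu_O$ is $G$-invariant by construction. It is also homogeneous: combining Proposition \ref{prop:hom} with the elementary identity $\widehat{f_t}=|t|^{\dim \mathfrak{g}}(\widehat{f})_{t^{-1}}$ (a change of variables in the defining integral, which passes to distributions) yields $(\widehat{\mu_O})_{t^2}=|t|^{2\dim\mathfrak{g}-\dim O}\widehat{\mu_O}$ for all $t\in F^\times$. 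These two facts place $\widehat{\mu_O}$ squarely in the setting of Harish-Chandra's theory of invariant distributions on $\mathfrak{g}$.

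The next step is to show that the restriction of $\widehat{\mu_O}$ to the regular semisimple locus $\mathfrak{g}^{\mathrm{rss}}$ is represented by a locally constant function $F_O$, together with a quantitative bound for $F_O$ near the singular set. This is Harish-Chandra's descent argument: in a small invariant neighbourhood of a regular semisimple element $Y_0$ with centralizer $\mathfrak{m}=\mathfrak{z}_{\mathfrak{g}}(Y_0)$, the distribution $\widehat{\mu_O}$ descends to an invariant distribution on a neighbourhood of $0$ in $\mathfrak{m}$ which is a linear combination of Fourier transforms of orbital integrals on $\mathfrak{m}$; since $Y_0$ is regular semisimple, $\mathfrak{m}$ is a Cartan subalgebra and these Fourier transforms are constants. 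Inducting on the semisimple rank one also tracks the growth: homogeneity forces $|F_O(Y)|\leq c\,|D(Y)|^{-1/2}$ for $Y$ near the singular set, where $D$ is the Weyl discriminant. By the Weyl integration formula this bound is locally integrable, because $|D(H)|^{-1/2}\cdot|D(H)|$ is a bounded continuous function on each Cartan subalgebra, so $F_O$ extends to an element of $L^1_{\mathrm{loc}}(\mathfrak{g})$.

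It then remains to verify that this locally integrable $F_O$ represents $\widehat{\mu_O}$ as a distribution on all of $\mathfrak{g}$, not just on $\mathfrak{g}^{\mathrm{rss}}$. Equivalently, the $G$-invariant distribution $\widehat{\mu_O}-F_O\,dX$ is supported on the singular set $\mathfrak{g}\setminus\mathfrak{g}^{\mathrm{rss}}$ and carries the homogeneity degree computed above, and Harish-Chandra's analysis of invariant distributions shows that the only such distribution is zero. This last step — the uniqueness of the locally integrable extension, which amounts to ruling out invariant distributions of the prescribed homogeneity supported on the (lower-dimensional) singular set — is the genuinely hard part of the theorem; the descent and homogeneity bookkeeping in the earlier steps is routine by comparison. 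In practice I would carry out the reduction above and then invoke the theorem as proved for $p$-adic Lie algebras in \cite{harish}, the reference already cited for Proposition \ref{prop:hom}.
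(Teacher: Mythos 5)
Your argument is correct and ultimately takes the same route as the paper, which simply cites Theorem 4.4 of \cite{harish}; the descent, homogeneity bound $|F_O|\leq c\,|D|^{-1/2}$, and uniqueness of the locally integrable extension that you sketch are precisely the ingredients of Harish-Chandra's proof. Since you close by invoking that same reference, the two proofs coincide, with yours offering a fuller account of what lies behind the citation.
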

\begin{proof}
    See Theorem 4.4 in \cite{harish}.
\end{proof}
\begin{proposition}\label{prop:fourierlat}
    Let $L\subseteq\mathfrak{g}$ be a lattice, then 
    \[
    \widehat{\mathbbm{1}_L}=dX(L)\mathbbm{1}_{L^*}.
    \]
\end{proposition}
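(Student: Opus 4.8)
The plan is to evaluate $\widehat{\mathbbm{1}_L}$ pointwise directly from the definition and to split into the two cases $Y\in L^*$ and $Y\notin L^*$. Unwinding the definition of the Fourier transform, for $Y\in\mathfrak g$ we have
\[
\widehat{\mathbbm{1}_L}(Y)=\int_{\mathfrak g}\mathbbm{1}_L(X)\,\Lambda(B(X,Y))\,dX=\int_L\Lambda(B(X,Y))\,dX .
\]
The first thing I would record is that, for fixed $Y$, the map $\chi_Y\colon X\mapsto\Lambda(B(X,Y))$ is a continuous character of the compact additive group $L$, since $B$ is bilinear and $\Lambda$ is an additive character. The statement then reduces to showing that $\chi_Y$ is trivial on $L$ exactly when $Y\in L^*$, together with the orthogonality relation $\int_L\chi=0$ for a nontrivial character $\chi$ of $L$.

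If $Y\in L^*$, then by definition of $L^*$ we have $B(X,Y)\in\mathfrak p$ for every $X\in L$, and since $\Lambda$ is trivial on $\mathfrak p$ this gives $\chi_Y\equiv 1$ on $L$. Hence $\widehat{\mathbbm{1}_L}(Y)=\int_L 1\,dX=dX(L)$, which is the asserted value since $\mathbbm{1}_{L^*}(Y)=1$.

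If $Y\notin L^*$, I would show that $\chi_Y|_L$ is a \emph{nontrivial} character of $L$. The set $B(L,Y)=\{B(X,Y):X\in L\}$ is an $\mathcal O$-submodule of $F$ (as $L$ is an $\mathcal O$-module and $B$ is $\mathcal O$-bilinear), it is finitely generated (a homomorphic image of the free $\mathcal O$-module $L$), and it is nonzero (otherwise $Y\in L^*$). Since the nonzero finitely generated $\mathcal O$-submodules of $F$ are precisely the fractional ideals $\mathfrak p^m$, we get $B(L,Y)=\mathfrak p^m$ for some $m\in\Z$; and $Y\notin L^*$ forces $\mathfrak p^m\not\subseteq\mathfrak p$, i.e.\ $m\le 0$, so $\mathcal O\subseteq B(L,Y)$. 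As $\Lambda$ is nontrivial on $\mathcal O$ there is $u\in\mathcal O$ with $\Lambda(u)\neq 1$, and then some $X_1\in L$ with $B(X_1,Y)=u$, whence $\chi_Y(X_1)\neq 1$. Now for any nontrivial character $\chi$ of $L$ one has $\int_L\chi(X)\,dX=0$: picking $X_0\in L$ with $\chi(X_0)\neq 1$ and using translation invariance of the Haar measure on $L$, $\int_L\chi(X)\,dX=\int_L\chi(X_0+X)\,dX=\chi(X_0)\int_L\chi(X)\,dX$, so $(1-\chi(X_0))\int_L\chi(X)\,dX=0$ and the integral vanishes. Applying this with $\chi=\chi_Y$ yields $\widehat{\mathbbm{1}_L}(Y)=0=dX(L)\,\mathbbm{1}_{L^*}(Y)$.

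There is no serious obstacle in this argument; the only point that needs a little care is the case $Y\notin L^*$, where one must upgrade the fact that $\chi_Y$ fails to be $1$ at some point to genuine nontriviality of $\chi_Y$ as a character of $L$. This is precisely what the observation that $B(L,Y)$ is a fractional ideal provides: such an ideal contains $\mathcal O$ as soon as it is not contained in $\mathfrak p$, which lets us find a point where $\chi_Y$ takes a prescribed nontrivial value of $\Lambda$. Everything else is the definition of the Fourier transform and the standard orthogonality of characters of a compact group.
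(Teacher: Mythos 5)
Your proof is correct and structurally the same as the paper's: both reduce to showing that the additive character $\chi_Y=\Lambda(B(\cdot,Y))$ of the compact group $L$ is trivial exactly when $Y\in L^*$, and both then conclude from the vanishing of $\int_L\chi$ for nontrivial $\chi$ (the paper computes this as a geometric sum over the finite cyclic quotient $L/\ker\chi_Y$; you use the translation trick — two dressings of the same fact). The place where you genuinely diverge, and in fact improve on the paper, is the nontriviality step when $Y\notin L^*$. The paper scales some $X\in L$ with $B(Y,X)\in\mathcal O^\times$ to obtain $X'\in L$ with $B(Y,X')=1$ and then asserts $\Lambda(B(Y,X'))=\Lambda(1)\neq 1$ on the grounds that $\Lambda$ is nontrivial on $\mathcal O$; but that is not automatic when the residue field $k$ has more than $p$ elements, since a nontrivial additive character of $k$ can vanish on the prime subfield and hence at $1$. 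Your observation that $B(L,Y)$ is a nonzero finitely generated $\mathcal O$-submodule of $F$, hence a fractional ideal, hence (not being contained in $\mathfrak p$) contains all of $\mathcal O$, lets you pull back an arbitrary $u\in\mathcal O$ with $\Lambda(u)\neq 1$ rather than being forced onto $u=1$. That is the right way to finish the step and closes a real gap in the paper's version.
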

\begin{proof}
Let $Y\in\mathfrak{g}\setminus L^*$. We will show that $\widehat{\mathbbm{1}_L}(Y)=0$. Write $L^\prime$ for the kernel of the group homomorphism $\Lambda(B(Y,\cdot)):L\to \C^\times$. Because $L$ is compact and $B(Y,\cdot)$ is continuous, $B(Y,L)$ is compact. Now since $\mathfrak{p}$ is open we have that $B(Y,L)/\mathfrak{p}$ is finite and since $\mathfrak{p}$ is in the kernel of $\Lambda$, this implies that $\Lambda(B(Y,L))\cong L/L^\prime$ is finite. Since $\Lambda(B(Y,L))$ is a finite subgroup of the group of units of a field, it is cyclic. Let $\Lambda(B(Y,L))$ be generated by $\zeta$ and let $l$ be its order. 

To prove that $l>1$, let $X\in L$ such that $B(Y,X)\notin\mathfrak{p}$. Note that such a $X$ must exist by our assumption that $Y\notin L^*$. Now for some $n\geq 0$ we have $B(Y,\varpi^nX)\in \mathcal{O}^\times$. Write ${\alpha\coloneqq B(Y,\varpi^nX)^{-1}}$, then $X^\prime\coloneqq\alpha\varpi^nX\in L$ and $B(Y,X^\prime)=1$. Since $\Lambda$ is nontrivial on $\mathcal{O}$, we have $\Lambda(B(Y,X^\prime))\neq 1$ and hence $\Lambda(B(Y,X^\prime))$ defines a nontrivial element of $\Lambda(B(Y,L))$, proving that $l>1$.

Now since $\Lambda(B(Y,\cdot))$ is constant on the cosets of $L^\prime$ in $L$, we have
\begin{align*}
\widehat{\mathbbm{1}_L}(Y) &= \int_\mathfrak{g} \mathbbm{1}_L(X)\Lambda(B(Y,X))dX\\
&=\int_L \Lambda(B(Y,X))dX\\
&=\sum_{Z\in L/L^\prime} dX(L^\prime)\Lambda(B(Y,Z))\\
&=dX(L^\prime)\sum_{n=0}^{l-1}\zeta^n=0.
\end{align*}

Conversely, if $Y\in L^*$, then $B(Y,L)\subseteq\mathfrak{p}$ and $\Lambda(B(Y,L))=\{1\}$, so with the notation above we have $L=L^\prime$ and $l=1$. Therefore a similar computation shows
\[
\widehat{\mathbbm{1}_L}(Y) = dX(L).
\]
We conclude that $\widehat{\mathbbm{1}_L}=dX(L)\mathbbm{1}_{L^*}$.
\end{proof}

\subsection{The local character expansion and the wavefront set}
We can use the exponential map to pull the distribution character of a representation back to a map defined near the origin of the Lie algebra $\mathfrak{g}$. The local character expansion expands this pull-back as follows.
\begin{theorem}
Let $\pi$ be an irreducible smooth representation of $G$, then there exist unique complex numbers $c_O(\pi)$ indexed by $O\in O(0)$ such that
\[
\Theta_\pi(\exp(X)) = \sum_{O\in O(0)}c_{O}(\pi)\widehat{\mu_{O}}(X),
\]
for all $X\in\mathfrak{g}$ sufficiently close to zero. 
\end{theorem}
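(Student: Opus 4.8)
The plan is to transport the whole statement to the Lie algebra and reduce it to an assertion about $G$-invariant distributions near the origin of $\mathfrak g$. The two ingredients I would take as black boxes are Harish-Chandra's regularity theorem (the distribution $\Theta_\pi$ is represented by a locally integrable function on $G$, locally constant on the regular semisimple locus) and the fact, already recalled above, that each $\widehat{\mu_O}$ is represented by a locally integrable function on $\mathfrak g$. Since $\exp$ is a homeomorphism from a lattice-neighbourhood $L_0$ of $0$ in $\mathfrak g$ onto a neighbourhood of $e$ in $G$, pulling back gives a $G$-invariant distribution $\theta$ on $L_0$ (after Harish-Chandra's normalization), and the Fourier transform together with Proposition~\ref{prop:fourierlat} lets us pass freely between $L_0$ and its dual. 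In these terms the theorem says exactly that the germ of $\theta$ at $0$ lies in the $\C$-linear span of the germs at $0$ of the $\widehat{\mu_O}$, $O\in O(0)$; dually, that $\widehat\theta$ agrees near $0$ with a finite linear combination of the nilpotent orbital integrals $\mu_O$.

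For existence I would argue as follows. First, admissibility of $\pi$ and Moy-Prasad theory give a point $x\in\mathcal B$ and an $r>0$ for which the values of $\Theta_\pi$ on $P_{x,r}$ are governed by the finite-dimensional space $V^{P_{x,r}}$; translated through $\exp$ this says that $\theta$ restricted to $\mathfrak g_{x,r}$ is a fixed invariant distribution of a very constrained type. Feeding this into the dilation operation $T\mapsto T_t$ introduced before Proposition~\ref{prop:hom}, one shows that the dilates $\theta_t$ restricted to a fixed small lattice stabilise as $\omega(t)\to\infty$, so that the germ of $\theta$ at $0$ is a well-defined limiting $G$-invariant distribution. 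A support argument --- small dilations contract supports toward $0$, so the supports of their Fourier transforms are pushed onto the nilpotent cone --- combined with a finiteness theorem for the space of such germs (Howe's finiteness conjecture, or Harish-Chandra's descent estimates) forces this limiting germ into the span of the $\widehat{\mu_O}$. Reading off coefficients produces the numbers $c_O(\pi)$.

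For uniqueness it is enough to show that the germs at $0$ of $\{\widehat{\mu_O}\}_{O\in O(0)}$ are linearly independent. Proposition~\ref{prop:hom} does most of this: applying the Fourier transform to $(\mu_O)_{t^2}=|t|^{\dim(O)}\mu_O$, and using Proposition~\ref{prop:homlat} to track how dilation interacts with the Fourier transform, shows that $\widehat{\mu_O}$ is homogeneous of a degree determined by $\dim(O)$ and $\dim\mathfrak g$; hence contributions from orbits of different dimension live in different homogeneity components and cannot cancel. Within a fixed dimension one localizes near a semisimple element lying in the closures of the orbits in question and invokes the linear independence of Shalika germs --- equivalently, the fact that the boundary strata of $\overline O$ determine $O$. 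Separating homogeneity components and applying this in each gives the required independence.

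The genuine obstacles are the two imported theorems: Harish-Chandra's regularity theorem and the finiteness statement used in the existence step are both substantial analytic results, and the linear independence of Shalika germs is itself nontrivial. Granting those, the remainder --- the reduction via $\exp$, the stabilisation of dilates, and the homogeneity bookkeeping through Propositions~\ref{prop:hom} and~\ref{prop:homlat} --- is comparatively formal, and I would organize the write-up so that these citations are isolated and everything else is elementary manipulation with lattices, dilations, and Fourier transforms of the kind set up in this section.
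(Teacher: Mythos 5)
The paper does not actually prove this statement: its entire ``proof'' is the citation ``See Theorem 16.2 in \cite{harish} with $\gamma=0$.'' This is the Harish-Chandra--Howe local character expansion, and the paper treats it as an external theorem. What you have written is, in effect, a sketch of the mathematics that lies behind that citation. As a sketch of the classical argument it is broadly on target: regularity of $\Theta_\pi$ lets one work with locally integrable functions rather than raw distributions; descent via $\exp$ transports the problem to $\mathfrak{g}$; the dilation bookkeeping, exactly as encoded in Propositions~\ref{prop:hom} and~\ref{prop:homlat}, identifies the homogeneity degree of each $\widehat{\mu_O}$; and Howe's finiteness theorem together with the stabilisation-of-dilates argument is what confines the germ at $0$ of the pulled-back character to the span of the $\widehat{\mu_O}$. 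But these are three imported theorems --- regularity, Howe finiteness, and the germ independence --- each comparable in depth to the theorem itself, which is precisely why the paper delegates the whole thing to \cite{harish}.

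There is one step in your sketch that, beyond relying on a black box, is actually circular as phrased. For uniqueness within a fixed orbit dimension you ``invoke the linear independence of Shalika germs --- equivalently, the fact that the boundary strata of $\overline{O}$ determine $O$.'' These are not equivalent: the second is a set-theoretic triviality, the first is a genuine theorem, and neither is literally the statement you need, which is linear independence of the germs at $0$ of the locally integrable functions $\widehat{\mu_O}$ for orbits of equal dimension. In the original argument this last point is established by testing against characteristic functions $\mathbbm{1}_{tL}$ of shrinking lattices, applying Proposition~\ref{prop:fourierlat} to rewrite $\widehat{\mu_O}(\mathbbm{1}_{tL})$ in terms of $\mu_O(\mathbbm{1}_{t^{-1}L^*})$, and then exploiting positivity and the combinatorics of how these orbital integrals vary with $L$; it is not a restatement of the Shalika germ expansion. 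So if you were to write this up you would need to replace that sentence with the actual content, or simply cite it alongside regularity and Howe finiteness --- which, given that the paper already isolates this theorem as a citation, is the sensible thing to do.
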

\begin{proof}
    See Theorem 16.2 in \cite{harish} with $\gamma=0$.
\end{proof}
\begin{remark}
    It is known that an irreducible smooth representation is admissible and hence the statement of this theorem makes sense.
\end{remark}
\begin{remark}
    While the theorem is formulated only for irreducible representations, it can be shown that the distribution character is additive on the Grothendieck group of admissible representations and hence we can extend the local character expansion to any finite length representation.
\end{remark}
We will later use this local character expansion to show that the canonical dimension of a representation is related to some nilpotent orbit. We will now use it to define the wavefront set of a representation.
\begin{definition}
    We define a partial order on $O(0)$ by $O\leq O^\prime$ if and only if $O$ is contained in the closure of $O^\prime$ with respect to the analytic topology.
\end{definition}
\begin{definition}
    Let $\pi$ be an admissible representation of $G$ of finite length, then we define the wavefront set $\WF(\pi)$ by
    \[
    \WF(\pi)\coloneqq\max\{O\in O(0)\mid c_O(\pi)\neq 0\},
    \]
    where the maximum is taken with respect to the partial order on $O(0)$. Note that this set need not be a singleton.
\end{definition}

\section{The canonical dimension of complex representations}\label{sec:complexcdim}
In this section we continue with our assumption that $\mathcal{F}=\C$. We use the local character expansion to establish some basic facts about the canonical dimension, which were known before. We also introduce the wavefront set and describe the relationship between the canonical dimension and the wavefront set.
\begin{proposition}\label{prop:canDimExaSeq}
    Fix $x\in\mathcal{B}$ and let 
    \[
    0\to V_1\to V_2\to V_3\to 0
    \]
    be an exact sequence of admissible representations of $G$, then 
    \[
    \cdim(V_2)=\max\{\cdim(V_1),\cdim(V_3)\},
    \]
    where the canonical dimensions are computed with respect to $x$.
\end{proposition}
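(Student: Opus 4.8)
The plan is to reduce the statement to two elementary ingredients: the exactness of the fixed-point functor $V\mapsto V^{P_{x,n}}$ on smooth representations over a field of characteristic zero, and the fact that $\log_q(\cdot)$ converts pointwise sums of nonnegative sequences into maxima. Neither ingredient should present a genuine obstacle; the only points that need a little care are the justification of exactness (which relies on characteristic zero and compactness of $P_{x,n}$) and the interchange of $\limsup$ with $\max$.

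First I would fix $n\in\N$ and argue that $V\mapsto V^{P_{x,n}}$ is exact. Left-exactness is automatic. For right-exactness, given $\bar v_3\in V_3^{P_{x,n}}$, lift it to some $v_2\in V_2$; by smoothness of $V_2$ the vector $v_2$ is fixed by an open subgroup of $P_{x,n}$, which therefore has finite index since $P_{x,n}$ is compact, so we may average $v_2$ over the resulting finite coset space to obtain a $P_{x,n}$-fixed vector mapping to $\bar v_3$ (the average is unchanged on $V_3$ because $\bar v_3$ is already $P_{x,n}$-fixed). Applying this to the given exact sequence and using admissibility, which makes all the invariant spaces finite-dimensional, yields
\[
\dim\!\left(V_2^{P_{x,n}}\right)=\dim\!\left(V_1^{P_{x,n}}\right)+\dim\!\left(V_3^{P_{x,n}}\right)
\]
for every $n\in\N$.

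Next I would record the general lemma that if $f,g,h:\N\to\R_{\geq 0}$ satisfy $f=g+h$, then $\log_q(f)=\max\{\log_q(g),\log_q(h)\}$. This follows from the sandwich $\max(g(n),h(n))\leq f(n)\leq 2\max(g(n),h(n))$: applying $\log_q$, dividing by $n$ and passing to $\limsup$, the contribution of $\log_q 2$ vanishes, so $\log_q(f)=\limsup_{n\to\infty}\frac1n\log_q(\max(g(n),h(n)))$; and since $\log_q(\max(g(n),h(n)))=\max(\log_q g(n),\log_q h(n))$ and $\limsup_n\max(a_n,b_n)=\max(\limsup_n a_n,\limsup_n b_n)$ for arbitrary sequences in $\R\cup\{-\infty\}$, the right-hand side equals $\max\{\log_q(g),\log_q(h)\}$. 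Combining this lemma, with $g(n)=\dim(V_1^{P_{x,n}})$ and $h(n)=\dim(V_3^{P_{x,n}})$, with the dimension identity above gives the claimed equality; if $V_1$ or $V_3$ is zero the corresponding quantity is $-\infty$ by convention and the identity still holds.
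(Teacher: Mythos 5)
Your proof is correct and takes essentially the same approach as the paper, which simply cites a reference (Renard, Prop.\ III.1.5) for the exactness of the invariants functor $V\mapsto V^{P_{x,n}}$ and leaves the remaining bookkeeping implicit. You spell out both the averaging argument establishing exactness and the $\log_q$-of-a-sum lemma, but the underlying decomposition $\dim(V_2^{P_{x,n}})=\dim(V_1^{P_{x,n}})+\dim(V_3^{P_{x,n}})$ is exactly what the paper's one-line proof invokes.
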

\begin{proof}
    This follows easily from the fact that given an open compact subgroup ${K\leq G}$, the functor $V\mapsto V^K$ from smooth representations of $G$ to vector spaces is exact (see Proposition III.1.5 in \cite{renard}).
\end{proof}
It has long been known that the canonical dimension is related to nilpotent orbits via the local character expansion (see for example Section 16 in \cite{vigneras} or Subsection 5.1 in \cite{barbaschmoy}). However, we find it useful to spell out this relationship explicitly and in detail.
\begin{theorem}\label{thm:canDim}
Let $(\pi,V)$ be an irreducible admissible representation of $G$ and let $O\in O(0)$ be an orbit of maximal dimension subject to the condition $c_O(\pi)\neq 0$. Then for any $x\in\mathcal{B}$ we have
\[
\cdim(\pi) = \lim_{n\to\infty} \frac{\log_q(\dim(V^{P_{x,n}}))}{n} = \frac{\dim(O)}{2}.
\]
\end{theorem}
\begin{proof}
Fix $x\in\mathcal{B}$ and use the normalization of $dg$ furnished by Lemma \ref{lem:measure}. Because $\dim(V^{P_{x,n}})$ is monotone increasing in $n$, the limit
\[
\lim_{n\to\infty} \frac{\log_q(\dim(V^{P_{x,n}}))}{n}
\]
exists if and only if
\[
\lim_{n\to\infty} \frac{\log_q(\dim(V^{P_{x,2n}}))}{2n}
\]
exists, in which case they are equal.

Let $e_n\coloneqq \frac{1}{dg(P_{x,n})}\mathbbm{1}_{P_{x,n}}\in C^\infty_c(G)$, then $\pi(e_n)$ is a projector onto $V^{P_{x,n}}$. This is a finite rank operator whose trace equals $\dim(V^{P_{x,n}})$ and thus
\[
\dim(V^{P_{x,n}}) = \tr(\pi(e_n)) = \Theta_\pi(e_n).
\]
Because $\exp(\mathfrak{g}_{x,n})=P_{x,n}$ for all $n$ and $\{\mathfrak{g}_{x,n}\}_{n\in\N}$ forms a neighbourhood basis of zero, we can apply the local character expansion to obtain that for $n$ sufficiently large
\[
\dim(V^{P_{x,n}}) = \sum_{O^\prime\in O(0)} c_{O^\prime}(\pi)\widehat{\mu_{O^\prime}}(e_n\circ\exp).
\]
Now note that $e_n\circ\exp=\frac{1}{dg(P_{x,n})}\mathbbm{1}_{\mathfrak{g}_{x,n}}$, then it follows that for all $n$ sufficiently large
\begin{align*}
\dim(V^{P_{x,n}}) &= \frac{1}{dg(P_{x,n})}\sum_{O^\prime\in O(0)} c_{O^\prime}(\pi)\widehat{\mu_{O^\prime}}(\mathbbm{1}_{\mathfrak{g}_{x,n}})\\
&=\frac{1}{dg(P_{x,n})}\sum_{O^\prime\in O(0)} c_{O^\prime}(\pi)\mu_{O^\prime}(\widehat{\mathbbm{1}_{\mathfrak{g}_{x,n}}})\\
&=\frac{dX(\mathfrak{g}_{x,n})}{dg(P_{x,n})}\sum_{O^\prime\in O(0)} c_{O^\prime}(\pi)\mu_{O^\prime}(\mathbbm{1}_{\mathfrak{g}_{x,n}^*}),
\end{align*}
where the last equality follows from Proposition \ref{prop:fourierlat}. Our normalization of the Haar measure on $G$ guarantees that $\frac{dX(\mathfrak{g}_{x,n})}{dg(P_{x,n})}=1$ for all $n\geq 1$. Next, for sufficiently large $n$, we compute
\begin{align*}
\dim(V^{P_{x,n}})&=\sum_{O^\prime\in O(0)} c_{O^\prime}(\pi)\mu_{O^\prime}(\mathbbm{1}_{\mathfrak{g}_{x,n}^*})\\
&=\sum_{O^\prime\in O(0)} c_{O^\prime}(\pi)\mu_{O^\prime}(\mathbbm{1}_{(\varpi^{n}\mathfrak{g}_{x,0})^*})\\
&=\sum_{O^\prime\in O(0)} c_{O^\prime}(\pi)\mu_{O^\prime}(\mathbbm{1}_{\varpi^{-n}\mathfrak{g}_{x,0}^*})\\
&=\sum_{O^\prime\in O(0)} c_{O^\prime}(\pi)\mu_{O^\prime}((\mathbbm{1}_{\mathfrak{g}_{x,0}^*})_{\varpi^{-n}})\\
&=\sum_{O^\prime\in O(0)} c_{O^\prime}(\pi)(\mu_{O^\prime})_{\varpi^{-n}}(\mathbbm{1}_{\mathfrak{g}_{x,0}^*}),
\end{align*}
where the third and fourth equalities follow from Proposition \ref{prop:homlat}. By Proposition \ref{prop:hom}, we obtain that for $n$ sufficiently large
\begin{align*}
\dim(V^{P_{x,2n}})&=\sum_{O^\prime\in O(0)} c_{O^\prime}(\pi)|\varpi^{-n}|^{\dim(O^\prime)}\mu_{O^\prime}(\mathbbm{1}_{\mathfrak{g}_{x,0}^*})\\
&=\sum_{O^\prime\in O(0)} c_{O^\prime}(\pi)q^{n\dim(O^\prime)}\mu_{O^\prime}(\mathbbm{1}_{\mathfrak{g}_{x,0}^*}).
\end{align*}
Now using that $\mathfrak{g}^*_{x,0}$ is a lattice and hence an open neighbourhood of $0\in\mathfrak{g}$, it is clear that $\mu_{O^\prime}(\mathbbm{1}_{\mathfrak{g}_{x,0}^*})\neq 0$ for all $O^\prime\in O(0)$. 

Next we compute
\begin{align*}
\log_q(\dim(V^{P_{x,2\bullet}})) &= \max_{O^\prime\in O(0)}\log_q\left(c_{O^\prime}(\pi)q^{\bullet\dim(O^\prime)}\mu_{O^\prime}(\mathbbm{1}_{\mathfrak{g}_{x,0}^*})\right)\\
&=\max_{O^\prime\in O(0),\: c_{O^\prime}(\pi)\neq 0}\log_q\left(q^{\bullet\dim(O^\prime)}\right)\\
&=\max\{\dim(O^\prime)\mid O^\prime\in O(0),\:c_{O^\prime}(\pi)\neq 0\}
\end{align*}
and in fact all of these limsups are limits. If we now fix an $O\in O(0)$ which maximizes $\dim(O^\prime)$ among those nilpotent orbits with $c_{O^\prime}(\pi)\neq 0$, then it is clear that
\[
\cdim(\pi) = \frac{1}{2}\log_q(\dim(V^{P_{x,2\bullet}}))=\frac{\dim(O)}{2} = \lim_{n\to\infty} \frac{\log_q(\dim(V^{P_{x,n}}))}{n}
\]
and this is independent of our choice of $x\in\mathcal{B}$.
\end{proof}
\begin{corollary}\label{cor:canDim}
    Let $(\pi,V)$ be an admissible representation of $G$ of finite length, then there exists ${O\in O(0)}$ such that for any $x\in\mathcal{B}$
    \[
    \cdim(\pi) = \lim_{n\to\infty} \frac{\log_q(\dim(V^{P_{x,n}}))}{n} = \frac{\dim(O)}{2}.
    \]
    In particular, $\cdim(\pi)$ is a finite integer, independent of the choice of $x\in\mathcal{B}$.
\end{corollary}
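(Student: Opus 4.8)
The plan is to deduce the corollary from Theorem~\ref{thm:canDim} by induction on the length of $\pi$, using the exactness of the invariants functor $V\mapsto V^{K}$. Fix $x\in\mathcal{B}$ and a composition series $0=V_0\subsetneq V_1\subsetneq\cdots\subsetneq V_m=V$ with each successive quotient $V_i/V_{i-1}$ irreducible (hence admissible). I will show by induction on $m$ that there is $O\in O(0)$, not depending on $x$, with
\[
\lim_{n\to\infty}\frac{\log_q(\dim(V^{P_{x,n}}))}{n}=\frac{\dim(O)}{2}.
\]
The case $m=1$ is precisely Theorem~\ref{thm:canDim}, which already furnishes such an $O$ valid for every $x$ and shows that the relevant $\limsup$ is a genuine limit.

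For the inductive step I would apply the exact sequence $0\to V_1\to V\to V/V_1\to 0$. Since $W\mapsto W^{P_{x,n}}$ is exact (Proposition III.1.5 in \cite{renard}), one gets $\dim(V^{P_{x,n}})=\dim(V_1^{P_{x,n}})+\dim((V/V_1)^{P_{x,n}})$ for every $n$. Theorem~\ref{thm:canDim} applied to the irreducible $V_1$ and the induction hypothesis applied to $V/V_1$ (of length $m-1$) yield nilpotent orbits $O_1,O_2$, both independent of $x$, with $\log_q(\dim(V_1^{P_{x,n}}))/n\to\dim(O_1)/2$ and $\log_q(\dim((V/V_1)^{P_{x,n}}))/n\to\dim(O_2)/2$. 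Writing $a_n=\dim(V_1^{P_{x,n}})$, $b_n=\dim((V/V_1)^{P_{x,n}})$ and using the elementary squeeze $\max(a_n,b_n)\le a_n+b_n\le 2\max(a_n,b_n)$, the sequence $\log_q(a_n+b_n)/n$ converges to $\max(\dim(O_1)/2,\dim(O_2)/2)$. Taking $O$ to be whichever of $O_1,O_2$ has the larger dimension (either, if equal) gives the asserted limit for $V$, with $O$ independent of $x$. In particular $\cdim(\pi)$, a priori only a $\limsup$, equals this limit and equals $\dim(O)/2$; this is consistent with the identity $\cdim(\pi)=\max\{\cdim(V_1),\cdim(V/V_1)\}$ coming from Proposition~\ref{prop:canDimExaSeq}.

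Finally I would record that $\dim(O)/2$ is a finite integer. Finiteness is immediate from $\dim(O)\le\dim(\mathfrak g)<\infty$. Integrality follows from the classical fact that every nilpotent orbit $O\in O(0)$ is even-dimensional: transporting the Kostant--Kirillov--Souriau symplectic form on the corresponding coadjoint orbit through the nondegenerate invariant form $B$ equips $O$ with a symplectic structure, forcing $\dim(O)$ to be even. Hence $\cdim(\pi)=\dim(O)/2\in\Z$. The only mildly delicate point in the whole argument is the passage from the two convergent growth rates to the convergence of $\log_q(a_n+b_n)/n$, but this is exactly the elementary squeeze above, so I do not expect a genuine obstacle; the substantive input is entirely Theorem~\ref{thm:canDim} together with exactness of the invariants functor.
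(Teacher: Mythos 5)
Your proof is correct and follows essentially the same route as the paper: induction on length, with the base case being Theorem~\ref{thm:canDim}, the inductive step using exactness of the invariants functor (which is exactly what underlies Proposition~\ref{prop:canDimExaSeq}), and integrality coming from the fact that nilpotent orbits are even-dimensional. The one place where you are slightly more careful than the paper, which simply declares the inductive step ``straightforward,'' is in spelling out the squeeze $\max(a_n,b_n)\le a_n+b_n\le 2\max(a_n,b_n)$ to upgrade the $\limsup$ identity of Proposition~\ref{prop:canDimExaSeq} to convergence of a genuine limit.
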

\begin{proof}
We prove the first statement first. Because the representation $\pi$ has finite length we can use induction on the length. The base case is covered by Theorem \ref{thm:canDim} and the induction step follows straightforwardly from Proposition \ref{prop:canDimExaSeq}. The other statements now follow easily, keeping in mind that nilpotent orbits have even dimension.
\end{proof}
\begin{remark}
    It is known that any smooth representation of $G$ of finite length is admissible and moreover, an admissible representation of $G$ has finite length if and only if it is finitely generated.
\end{remark}

\subsection{The canonical dimension and the wavefront set}
We can now relate the canonical dimension to the wavefront set. Let $\pi$ be an admissible representation of $G$ of finite length and let $O\in O(0)$ be an orbit provided by Corollary \ref{cor:canDim}. The proof of Theorem \ref{thm:canDim} shows that $O$ has maximal dimension among those orbits $O^\prime\in O(0)$ satisfying $c_{O^\prime}(\pi)\neq 0$. Then $O$ is also maximal with respect to the poset structure on $O(0)$ and thus $O\in \WF(\pi)$. Moreover, $\WF(\pi)$ cannot contain an orbit of greater dimension. We see that $\cdim(\pi)=\dim(\WF(\pi))$, where we have written $\dim(\WF(\pi))$ for the dimension of the variety obtained by taking the union of all elements in $\WF(\pi)$. It is now clear that the canonical dimension of $\pi$ can be deduced from its wavefront set. The converse is false, however the canonical dimension does tell us about the dimension of the wavefront set. In this sense a bound for the canonical dimension gives us a bound for the wavefront set.

\section{Complex depth-zero supercuspidal representations}\label{sec:supercuspidal}
We continue with our assumption that $\mathcal{F}=\C$. In this section we prove an upper bound for the canonical dimension of an irreducible depth-zero supercuspidal representation of $G$. We fix such a representation $\pi$. It follows that there exists some vertex $x\in\mathcal{B}_0$ such that $\pi$ is of the form $\cInd_{P_x}^G(\widetilde{\sigma})$, where $\widetilde{\sigma}$ is the inflation to $P_x$ of a cuspidal representation $\sigma$ of $\mathcal{G}_x$ (see Proposition 6.8 in \cite{moy-prasad2}). 

We use the notation from Section \ref{sec:main}. Note that we have $K=P_x$ and therefore the map $\iota: \Lambda(r)\to P_{x,r}\backslash \mathcal{B}_0^{\lambda(x)}$ is injective for every $r\in\N$ and hence $\iota$ is a bijection. 
\begin{proposition}\label{prop:upContain}
    Let $r\in\N$, then we have
    \[
   \iota( B(x,r;P_{x},\widetilde{\sigma}))\subseteq P_{x,r}\backslash B\left(x,1+(r+1)\sum_{i=1}^d c_i\right).
    \]
\end{proposition}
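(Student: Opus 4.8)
The plan is to prove the contrapositive: if $N:=d(x,x_y)\ge 2+(r+1)\sum_{i=1}^d c_i$, then $V_\sigma^{\mathcal{G}_{x,r}(x_y)}=0$, i.e.\ $P_{x,r}y\notin B(x,r,\sigma)$. Granting this, $P_{x,r}y\in B(x,r,\sigma)$ forces $d(x,x_y)\le 1+(r+1)\sum c_i$, so $x_y\in B\!\left(x,1+(r+1)\sum c_i\right)$; since $P_{x,r}\subseteq P_x$ fixes $x$ and $G$ acts on $\mathcal{B}$ by isometries, every point of the orbit $P_{x,r}y$ is at distance $d(x,y)=d(x,x_y)$ from $x$, whence $P_{x,r}y\in P_{x,r}\backslash B\!\left(x,1+(r+1)\sum c_i\right)$. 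The distance $d$, the sets $B(x,r,\sigma)$ and $B(x,R)$, the group $\mathcal{G}_x$, and the integer $\sum c_i$ (the height of the highest root) are all independent of the chosen base $\Pi$, so we are free to replace $\Pi$ by a Weyl-conjugate; do this so that $z:=x_y-x\in V$ lies in the closed fundamental chamber $C^+$, i.e.\ $\alpha(z)\ge 0$ for all $\alpha\in\Phi^+$.

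The key reduction is that it suffices to exhibit a \emph{proper} parabolic subgroup $Q\subsetneq\mathcal{G}_x$ whose unipotent radical $U_Q$ satisfies $U_Q\subseteq\mathcal{G}_{x,r}(x_y)$: then $V_\sigma^{\mathcal{G}_{x,r}(x_y)}\subseteq V_\sigma^{U_Q}=0$ by cuspidality of $\sigma$. To bound $\mathcal{G}_{x,r}(x_y)$ from below, I would use that $P_x\cap P_{x_y,r}$ contains $P_h$ for the concave function $h:=\max(f_x,f_{x_y}+r)$ (since $h\ge f_x$ and $h\ge f_{x_y}+r$, every generator $U_{\alpha,h(\alpha)}$, $T_{h(0)}$ of $P_h$ lies in $P_{f_x}=P_x$ and in $P_{f_{x_y}+r}=P_{x_y,r}$ respectively). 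For $\alpha$ in $\Phi_x:=\{\beta\in\Phi:\beta(x)\in\Z\}$ with $\alpha(z)\ge r$ one computes $h(\alpha)=\max(-\alpha(x),-\alpha(x_y)+r)=-\alpha(x)\in\Z$, and the image of $U_{\alpha,-\alpha(x)}$ in $\mathcal{G}_x=P_x/P_{x+}$ is exactly the root subgroup $\bar U_\alpha$ of $\mathcal{G}_x$ in direction $\alpha$ (using $P_{x+}=P_{f_x^*}$ with $f_x^*(\alpha)=-\alpha(x)+1$). Hence $\mathcal{G}_{x,r}(x_y)\supseteq\langle\bar U_\alpha:\alpha\in\Phi_x\cap\Phi^+,\ \alpha(z)\ge r\rangle$.

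It remains to locate a proper parabolic's unipotent radical inside this group. The crucial structural fact is that, because $\mathbb{G}$ is simply connected and absolutely almost simple and $x$ is a vertex, $\mathcal{G}_x$ is semisimple of rank $d$, so $\Phi_x$ is a full-rank sub-root-system of $\Phi$; its base $\Delta_x\subseteq\Phi_x\cap\Phi^+$ is a $\Z$-basis of the root lattice and satisfies $\beta(z)\ge 0$ for all $\beta\in\Delta_x$. First I would show that some $\beta_0\in\Delta_x$ has $\beta_0(z)>r$: since $N-1$ is the maximum over $\alpha\in\Phi$ of the number of integers strictly between $\alpha(x)$ and $\alpha(x_y)$, and this number is $<|\alpha(z)|+1\le\alpha_0(z)+1$ (as $z\in C^+$), we get $\alpha_0(z)>N-2\ge(r+1)\sum c_i$; expanding $\alpha_0=\sum c_i\alpha_i$ with $c_i>0$ and $\alpha_i(z)\ge 0$ forces $\alpha_{i_0}(z)>r$ for some simple root $\alpha_{i_0}$, and since $\Delta_x$ spans the root lattice some $\beta_0\in\Delta_x$ has a strictly positive $\alpha_{i_0}$-coefficient, giving $\beta_0(z)\ge\alpha_{i_0}(z)>r$. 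Now set $I:=\{\beta\in\Delta_x:\beta(z)<r\}$, a proper subset of $\Delta_x$ (it omits $\beta_0$), and let $Q$ be the corresponding standard parabolic of $\mathcal{G}_x$, which is proper. Any $\alpha\in\Phi_x\cap\Phi^+$ not in the $\Z$-span $\Z I$ of $I$ has a $\Delta_x$-coefficient $\ge 1$ on some $\beta\notin I$, so $\alpha(z)\ge\beta(z)\ge r$; therefore $U_Q=\langle\bar U_\alpha:\alpha\in\Phi_x\cap\Phi^+,\ \alpha\notin\Z I\rangle\subseteq\mathcal{G}_{x,r}(x_y)$, finishing the argument.

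The step I expect to demand the most care is the double-sided control of $\mathcal{G}_{x,r}(x_y)$: on one hand the precise Moy--Prasad bookkeeping of which $U_{\alpha,s}$ survive, and with what image, in $\mathcal{G}_x=P_x/P_{x+}$, and on the other the passage from ``$x_y$ is far from $x$'' to ``$z$ is large on some simple root of $\Phi_x$''. The latter genuinely relies on $\Phi_x$ having full rank $d$; without the simply-connected hypothesis one could choose $z\neq 0$ orthogonal to all of $\Phi_x$ with $d(x,x_y)$ arbitrarily large while $\mathcal{G}_{x,r}(x_y)$ stays trivial, so the statement would fail. A secondary point to get right is the harmless-looking reduction to $z\in C^+$ together with the base-independence of all quantities involved.
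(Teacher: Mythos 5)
Your proof is correct and takes essentially the same approach as the paper: reduce by cuspidality of $\sigma$ to exhibiting the unipotent radical of a proper parabolic of $\mathcal{G}_x$ inside $\mathcal{G}_{x,r}(x_y)$, then do the bookkeeping via $\alpha_0=\sum c_i\alpha_i$ and a base of the full-rank subsystem $\Phi_x$. The only differences are normalizational: the paper conjugates so that $x$ is at or adjacent to $o$ and $x_y\in -C^+$ and then invokes Lemma \ref{lem:polytope}, while you change the base $\Pi$ so that $z=x_y-x\in C^+$ and count separating walls directly, explicitly naming the parabolic via $I\subsetneq\Delta_x$; this is cosmetic, and if anything your bookkeeping via $h=\max(f_x,f_{x_y}+r)$ is tidier than the paper's (whose inequality ``$(f_{x_y}+r)(\beta)<-1$'' has a sign slip that your version avoids).
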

\begin{proof}
Let $P_{x,r}y\in P_{x,r}\mathcal{B}_0^{\lambda(x)}$ such that $d(x,y)> 1+(r+1)\sum_{i=1}^d c_i$. Note that $d(x,y)$ is independent of the chosen representative $y$ of $P_{x,r}y$. Let $g\coloneqq \iota^{-1}(P_{x,r}y)$, then we have to show that $V_{\widetilde{\sigma}}^{G_K(g^{-1},r)}=0$. Because $\sigma$ is cuspidal, it suffices to show that the image of $G_K(g^{-1},r)=P_{g^{-1}x,r}\cap P_x$ in $\mathcal{G}_x$ contains the unipotent radical of a parabolic. Note that $d(g^{-1}x,x)=d(x,gx)=d(x,y)>1+(r+1)\sum_{i=1}^d c_i$.

We may assume without loss of generality that $x=o$ or else $x$ is adjacent to $o$. In either case, we conclude $d(o,g^{-1}x)\geq d(x,g^{-1}x)-1>(r+1)\sum_{i=1}^d c_i$. We may furthermore assume by conjugating that $g^{-1}x\in C^+$. From this it follows by Lemma \ref{lem:polytope} that $\alpha_0(g^{-1}x)> (r+1)\sum_{i=1}^d c_i$. Now recall that
\[
\alpha_0(g^{-1}x)=\sum_{i=1}^d c_i\alpha_i(g^{-1}x)
\]
and thus it follows that there exists some $k$ such that $\alpha_k(g^{-1}x)>r+1$.

It follows from \cite{bruhat1} (see Proposition 6.4.23) that $\mathcal{G}_x$ is a reductive group of type $\Phi_x$, where $\Phi_x$ is defined by
\[
\Phi_x=\{\alpha\in\Phi\mid \alpha(x)\in\Z\}.
\]
Because $x$ is a vertex, the rank of $\Phi_x$ is $d$. The root system $\Phi_x$ inherits a system of positive roots from $\Phi$ via $\Phi_x^+=\Phi_x\cap\Phi^+$. Let $\{\beta_1,\dots,\beta_d\}$ be the corresponding set of fundamental roots. There exists $j$ such that when we express $\beta_j$ as a nonnegative integral linear combination of the $\alpha_i$, the $\alpha_k$ appears with positive coefficient. It follows that $\beta_j(g^{-1}x)\geq \alpha_k(g^{-1}x)>r+1$. Let $\Phi_x^+$ be ordered in the usual way, then $(f_{g^{-1}x}+r)(\beta)<-1$ for all $\beta_j\leq\beta\in\Phi^+$. Now because we assumed that $x$ is either equal or adjacent to $o$ we have $(f^*_x)(\beta)\geq 0$ for all $\beta\in\Phi^+$. We conclude that the projection of $P_{g^{-1}x,r}\cap P_x$ in $\mathcal{G}_x$ contains all the root subgroups corresponding to roots $\beta\in\Phi_x^+$ satisfying $\beta\geq \beta_j$. The group generated by these root subgroups is precisely the unipotent radical of the parabolic corresponding to the subset $\{\beta_1,\dots,\beta_{j-1},\beta_{j+1},\dots,\beta_d\}$ of fundamental roots. Thus the projection of $P_{g^{-1}x,r}\cap P_x$ in $\mathcal{G}_x$ contains the unipotent radical of a parabolic, which finishes the proof.
\end{proof}
\begin{lemma}\label{lem:containUpperBound}
    Let $r\in\N$, then we have
    \[
    \left|P_{x,r}\backslash B\left(x,1+(r+1)\sum_{i=1}^d c_i\right)\right|\leq \left|P_{o,r+1}\backslash B\left(o,2+(r+1)\sum_{i=1}^d c_i\right)\right|.
    \]
\end{lemma}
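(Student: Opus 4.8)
The plan is to deduce the inequality through three steps, none of which decreases the number of orbits being counted: replace $x$ by a conjugate lying inside the fundamental alcove, enlarge the radius by one, and refine the acting group from $P_{x,r}$ down to $P_{o,r+1}$. Throughout write $R\coloneqq 1+(r+1)\sum_{i=1}^d c_i$, so the right-hand side counts $P_{o,r+1}$-orbits in $B(o,R+1)$.

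First I would note that $|P_{x,r}\backslash B(x,R)|$ is unchanged when $x$ is replaced by any $G$-translate, since the $G$-action on $\mathcal B$ preserves distances and conjugates the subgroups $P_{x,r}$, whereas the right-hand side does not involve $x$ at all. So, exactly as in the proof of Proposition~\ref{prop:upContain}, I may assume that $x$ is a vertex of the fundamental alcove $C$; in particular $x\in\A$ and $d(o,x)\le 1$ (the vertices of $C$ being pairwise adjacent), and the triangle inequality then gives $B(x,R)\subseteq B(o,R+1)$.

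The crucial point is the containment $P_{o,r+1}\subseteq P_{x,r}$. I would deduce this from the descriptions $P_{o,r+1}=P_{f_o+r+1}$ and $P_{x,r}=P_{f_x+r}$ together with the elementary observation that $g\ge g'$ pointwise implies $P_g\subseteq P_{g'}$ — each generator $U_{\alpha,g(\alpha)}$ (resp.\ $T_{g(0)}$) of $P_g$ sits inside the corresponding generator of $P_{g'}$; note that this is used only at the level of generators, so the stronger hypotheses of Lemma~\ref{lem:crucial} are not required. It then suffices to check $f_o+1\ge f_x$, i.e.\ (the comparison at $0\in\widetilde\Phi$ being trivial) that $\alpha(x)\ge -1$ for every $\alpha\in\Phi$; this holds because $x\in C$ forces $0\le\alpha(x)\le\alpha_0(x)\le 1$ for $\alpha\in\Phi^+$ (as $\alpha_0-\alpha$ is a nonnegative combination of simple roots), and the negative roots follow by symmetry.

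To finish, I would observe that $P_{o,r+1}\subseteq P_o$ and $P_{x,r}\subseteq P_x$ preserve the distances to $o$ and to $x$ respectively, so $P_{o,r+1}$ stabilizes $B(o,R+1)$ while any $P_{x,r}$-orbit meeting $B(x,R)$ remains inside $B(x,R)\subseteq B(o,R+1)$. Since $P_{o,r+1}\subseteq P_{x,r}$, the assignment $P_{o,r+1}y\mapsto P_{x,r}y$ is a well-defined surjection from the $P_{o,r+1}$-orbits contained in $B(o,R+1)$ onto the $P_{x,r}$-orbits meeting $B(o,R+1)$, and this target set contains $P_{x,r}\backslash B(x,R)$. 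Comparing cardinalities yields
\[
\bigl|P_{x,r}\backslash B(x,R)\bigr|\le\bigl|P_{o,r+1}\backslash B(o,R+1)\bigr|,
\]
which is the assertion. I expect the only real subtlety to be the containment $P_{o,r+1}\subseteq P_{x,r}$ (which, as noted, lies just outside the reach of Lemma~\ref{lem:crucial} and must be argued at the level of generators), together with the mild bookkeeping needed to pass between orbits \emph{meeting} a ball and orbits \emph{contained in} it — harmless here because a $P_{x,r}$-orbit is a full sphere about $x$, but worth spelling out.
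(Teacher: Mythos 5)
Your proposal is correct and takes essentially the same route as the paper: reduce by conjugation to the case where $x$ is a vertex of the fundamental alcove (equivalently, $x=o$ or $x$ is adjacent to $o$), use $d(o,x)\le 1$ to get $B(x,R)\subseteq B(o,R+1)$, establish $P_{o,r+1}\subseteq P_{x,r}$, and combine. The paper states $P_{o,r+1}\subseteq P_{x,r}$ without justification and says only that "the result follows"; you supply the missing generator-level verification (via $\alpha(x)\ge -1$ for all $\alpha\in\Phi$) and spell out the orbit-counting surjection, both of which are sound and worth including.
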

\begin{proof}
    The action of $G$ on $\mathcal{B}_0$ preserves distances and every vertex is conjugate either to the origin $o\in\A$ or to a vertex adjacent to $o$. In either case we have that 
    \[
    B\left(x,1+(r+1)\sum_{i=1}^d c_i\right)\subseteq B\left(o,2+(r+1)\sum_{i=1}^d c_i\right).
    \]
    Moreover, $P_{o,r+1}\subseteq P_{x,r}$ and thus the result follows.
\end{proof}
\begin{corollary}\label{cor:upperBound}
    Let $r\in\N$, then we have
    \[
    |B(x,r;P_{x},\widetilde{\sigma})|\leq \gamma\Gamma \left(2+(r+1)\sum_{i=1}^d c_i\right)^d q^{(r+1)|\Phi^+|}.
    \]
\end{corollary}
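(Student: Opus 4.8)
The plan is to chain together the three preceding results, since all the real work has already been done. First, Proposition~\ref{prop:upContain} gives the inclusion
\[
B(x,r,\sigma)\subseteq P_{x,r}\backslash B\left(x,1+(r+1)\sum_{i=1}^d c_i\right),
\]
so it suffices to bound the cardinality of the right-hand side. Next, the lemma immediately above replaces this set by one based at the origin: its cardinality is at most $\left|P_{o,r+1}\backslash B\left(o,2+(r+1)\sum_{i=1}^d c_i\right)\right|$. That reduction uses that every vertex of $\mathcal{B}$ is $G$-conjugate either to $o$ or to a neighbour of $o$, that the $G$-action preserves distances, and that $P_{o,r+1}\subseteq P_{x,r}$ when $x$ is adjacent to $o$.

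Finally, I would apply Corollary~\ref{cor:upBound} with the substitution $r^\prime=r+1$ and radius $2+(r+1)\sum_{i=1}^d c_i$, which yields
\[
\left|P_{o,r+1}\backslash B\left(o,2+(r+1)\sum_{i=1}^d c_i\right)\right|\leq \gamma\Gamma\left(2+(r+1)\sum_{i=1}^d c_i\right)^d q^{(r+1)|\Phi^+|}.
\]
Stringing the three inequalities together gives precisely the claimed bound, with the same constants $\gamma$ and $\Gamma$ (coming from Lemma~\ref{lem:quotientSize} and Lemma~\ref{lem:latticeCount} via Corollary~\ref{cor:upBound}).

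There is no genuine obstacle here; the only points requiring minor care are purely bookkeeping: checking that the hypotheses of Corollary~\ref{cor:upBound} are met by the substituted parameters (both are natural numbers, as required), and confirming that the constants named in the statement are exactly those produced by Corollary~\ref{cor:upBound} rather than new ones. The substantive content — the cuspidality-plus-unipotent-radical argument bounding which vertices can contribute, the conjugation argument reducing to the origin, and the Euclidean volume estimate for lattice points in $rC$ — is entirely contained in Proposition~\ref{prop:upContain}, the preceding lemma, and Lemma~\ref{lem:latticeCount} respectively, so the proof is a one-line concatenation.
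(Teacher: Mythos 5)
Your proof is correct and matches the paper's argument exactly: both chain Proposition~\ref{prop:upContain}, the intervening lemma, and Corollary~\ref{cor:upBound} with $r^\prime=r+1$ and radius $2+(r+1)\sum_{i=1}^d c_i$. The identification of where the constants $\gamma$ and $\Gamma$ originate is also accurate.
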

\begin{proof}
    This follows immediately upon combining Proposition \ref{prop:upContain}, Lemma \ref{lem:containUpperBound}, Corollary \ref{cor:upBound} and the fact that $\iota$ is a bijection. 
\end{proof}
\begin{corollary}\label{cor:complexUpperBound}
    We have
    \[
    \cdim(\pi)\leq |\Phi^+|.
    \]
\end{corollary}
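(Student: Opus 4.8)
The plan is to read off the exponential growth rate from the bound just established and feed it through Corollary~\ref{cor:cdimEq}. By that corollary we have $\cdim(\pi)=\log_q(|B(x,\bullet,\sigma)|)$, so it suffices to control the right-hand side, and the preceding corollary already supplies the pointwise bound
\[
|B(x,r,\sigma)|\leq \gamma\Gamma\left(2+(r+1)\sum_{i=1}^d c_i\right)^d q^{(r+1)|\Phi^+|}
\]
for all $r\in\N$.

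First I would note that $f\mapsto\log_q(f)$ is monotone under pointwise domination: if $f(n)\leq g(n)$ for all $n$, then $\log_q(f)\leq\log_q(g)$, straight from the definition of the $\limsup$. Applying this to the displayed inequality, it remains to compute $\log_q$ of the explicit function $h(r)\coloneqq\gamma\Gamma(2+(r+1)\sum_i c_i)^d q^{(r+1)|\Phi^+|}$. Taking base-$q$ logarithms gives
\[
\log_q(h(r)) = \log_q(\gamma\Gamma) + d\,\log_q\!\left(2+(r+1)\textstyle\sum_{i} c_i\right) + (r+1)|\Phi^+|,
\]
and dividing by $r$ and letting $r\to\infty$: the constant term disappears, the logarithmic term disappears because $\log_q$ of a quantity that is linear in $r$ is $o(r)$, and the final term tends to $|\Phi^+|$. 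Hence $\log_q(h)=|\Phi^+|$, and combining with the monotonicity step yields $\cdim(\pi)=\log_q(|B(x,\bullet,\sigma)|)\leq|\Phi^+|$.

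I do not anticipate any real obstacle here: the argument is a routine extraction of an exponential growth rate from an upper bound. The only points needing (trivial) justification are the monotonicity of $\log_q$ under pointwise domination and the remark that the $\limsup$ in question is an honest limit, since $\log_q(h(r))/r$ converges; both are immediate from the definitions. All the genuine content — the identification of $\cdim(\pi)$ with the growth of $|B(x,\bullet,\sigma)|$ and the containment of $B(x,r,\sigma)$ in a ball of radius linear in $r$ — has already been carried out in the preceding results.
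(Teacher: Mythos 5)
Your argument is correct and matches the paper's own proof, which likewise combines the preceding corollary's bound $|B(x,r,\sigma)|\leq \gamma\Gamma\bigl(2+(r+1)\sum_i c_i\bigr)^d q^{(r+1)|\Phi^+|}$ with the identification $\cdim(\pi)=\log_q(|B(x,\bullet,\sigma)|)$ from Corollary~\ref{cor:cdimEq}. You simply spell out the routine growth-rate extraction (monotonicity of $\log_q$ under pointwise domination, disappearance of the polynomial prefactor) that the paper leaves implicit.
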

\begin{proof}
This is immediate when combining Corollary \ref{cor:upperBound} with Corollary \ref{cor:cdimEq}.    
\end{proof}
\begin{remark}
    This upper bound agrees with a well-known and trivial upper bound for the wavefront set and equals half the dimension of the principal nilpotent orbit of $G$. This result could also easily have been deduced from the relationship between the canonical dimension and the wavefront set, however our methods avoid making use of the local character expansion, which is quite a deep result.
\end{remark}
\begin{remark}
It would be natural to try to extend this upper bound to arbitrary irreducible depth-zero representations of $G$. Any such representation is a subquotient of a parabolic induction of an irreducible depth-zero supercuspidal representation. It follows from Theorem \ref{thm:parabolic} that our upper bound is preserved under parabolic induction and moreover Proposition \ref{prop:canDimExaSeq} implies that it is preserved under taking subquotients as well. So it would seem that we can extend our upper bound without much difficulty. However, in this paper we have restricted our attention to the class of split, simply connected, absolutely almost simple algebraic groups and this class is not closed under taking Levi subgroups. It follows that the argument outlined above does not immediately apply. However, we expect our results to be valid for the much larger class of all split reductive groups which means the argument would go through.
\end{remark}

\bibliographystyle{plainmath}
\bibliography{references.bib}

\end{document}